	\DeclareFontFamily{U}{matha}{\hyphenchar\font45}
\DeclareFontShape{U}{matha}{m}{n}{
      <5> <6> <7> <8> <9> <10> gen * matha
      <10.95> matha10 <12> <14.4> <17.28> <20.74> <24.88> matha12
      }{}
\DeclareSymbolFont{matha}{U}{matha}{m}{n}
\DeclareFontFamily{U}{mathx}{\hyphenchar\font45}
\DeclareFontShape{U}{mathx}{m}{n}{
      <5> <6> <7> <8> <9> <10>
      <10.95> <12> <14.4> <17.28> <20.74> <24.88>
      mathx10
      }{}
\DeclareSymbolFont{mathx}{U}{mathx}{m}{n}
\DeclareMathDelimiter{\vvvert}{0}{matha}{"7E}{mathx}{"17}
	\newtheorem{thm}{Theorem}[section]
	\newtheorem{lem}[thm]{Lemma}
	\newtheorem{prop}[thm]{Proposition}
	\newtheorem{cor}[thm]{Corollary}
	\theoremstyle{definition}
	\newtheorem{df}{Definition}
	\newtheorem{rem}{Remark}
	\newtheorem{eg}{Example}
	\newtheorem{problem}{Problem}
\title[On the spaces dual to combinatorial Banach spaces]{On the spaces dual to combinatorial Banach spaces}
\author[P. Borodulin-Nadzieja]{Piotr Borodulin--Nadzieja}
\address[Piotr Borodulin-Nadzieja]{Mathematical Institute, University of Wroc\l aw \\  pl. Grunwaldzki 2, 50-384 Wroc\l aw, Poland}
\email{pborod@math.uni.wroc.pl}
\author[S. Jachimek]{Sebastian Jachimek}
\address[Sebastian Jachimek]{Mathematical Institute, University of Wroc\l aw \\  pl. Grunwaldzki 2, 50-384 Wroc\l aw, Poland}
\email{sebastian.jachimek@math.uni.wroc.pl}
\author[A. Pelczar-Barwacz]{Anna Pelczar-Barwacz}
\address[Anna Pelczar-Barwacz]{Institute of Mathematics, Jagiellonian University \\ ul. prof. Stanisława Łojasiewicza 6, 30-348 Krak\o w, Poland}
\email{anna.pelczar@uj.edu.pl}
\thanks{The first author was supported by the 
National Science Center project no. 2018/29/B/ST1/00223. The second author was supported by the National Science Centre, Poland under
the Weave-UNISONO call in the Weave programme 2021/03/Y/ST1/00124. The third author was supported by the grant of the National Science Centre, Poland, no. UMO-2020/39/B/ST1/01042.}
\subjclass[2020]
{46B20, 46B45}
\keywords{quasi-Banach spaces, Banach envelope, Schreier space, Schur property, $\ell_1$-saturated spaces, compact families of finite sets}
\begin{document}

\maketitle
\begin{abstract}
	We present quasi-Banach spaces which are closely related to the duals of combinatorial Banach spaces. More precisely, for a compact family $\mathcal{F}$
	of finite subsets of $\omega$ we define a quasi-norm $\lVert \cdot \rVert^\mathcal{F}$ whose Banach envelope 
 is the dual norm for the combinatorial space generated by $\mathcal{F}$. Such quasi-norms seem to be much easier to handle than the dual norms and yet the quasi-Banach spaces induced by them share many properties with the dual spaces. We show that the quasi-Banach spaces  induced by large families (in the sense of Lopez-Abad and Todorcevic) are
	$\ell_1$-saturated and do not have the Schur property. In particular, this holds for the Schreier families. 
\end{abstract} 
\section{Introduction}

A combinatorial Banach space is the completion of $c_{00}$, the space of all finitely supported sequences of real numbers, with respect to the following norm

\begin{equation}\label{dd1}
\lVert x \rVert_\mathcal{F} = \sup_{F\in \mathcal{F}} \sum_{k\in F} |x(k)|,
\end{equation}

where $\mathcal{F}$ is a family of finite subsets of $\omega$ (which is hereditary and covers $\omega$, see Section \ref{schreier-like} for the details). We denote such completion by $X_\mathcal{F}$. One of the most 'famous' examples of a Banach
space of this form is the \emph{Schreier space} induced by so-called Schreier family $\mathcal{S}$ (see Section \ref{schreier-like}) and so combinatorial Banach spaces are sometimes called Schreier-like spaces. They were studied mainly for $\mathcal{F}$ being compact families, e.g. by  Castillo and Gonzales (\cite{Castillo-Gonzales}), Lopez Abad and Todorcevic (\cite{Jordi-Stevo}), Brech, Ferenczi and Tcaciuc (\cite{Brech}). Recall that a family
$\mathcal{F} \subseteq \mathcal{P}(\omega)$ is compact if it is compact as a subset of $2^\omega$, via the natural identification of $\mathcal{P}(\omega)$ and $2^\omega$.
In \cite{NaFa} Borodulin-Nadzieja and Farkas considered Schreier-like spaces for families that are not necessarily compact and that were motivated by the theory of analytic P-ideals.

The main motivation of our article is an attempt to study Banach spaces dual to combinatorial Banach spaces. Even in the case of Schreier space, not much seems to be known about its dual (see \cite{Lipieta}). Perhaps the reason 
lies in the lack of a nice description of the dual norm. Seeking such a description we came up with the following function $\lVert \cdot\rVert^{\mathcal{F}} \colon c_{00} \to \mathbb{R}$:

\begin{equation} \label{dd2}
	\lVert x \rVert^\mathcal{F} = \inf\{ \sum_{F \in \mathcal{P}} \sup_{i\in F} |x(i)|\colon \mathcal{P}\subseteq \mathcal{F} \mbox{ is a partition of }\omega\}.
\end{equation}
Perhaps this formula does not look tempting at first glance, but in a 'combinatorial' sense it is dual to $\lVert \cdot \rVert_\mathcal{F}$. Indeed, we can think about evaluating $\lVert x \rVert_{\mathcal{F}}$ as partitioning $\omega$ into pieces from
$\mathcal{F}$, summing up $|x(i)|$ for $i$ from one piece of the partition and then maximizing the result, for all partitions and all pieces. On the other hand, evaluating $\lVert x \rVert^\mathcal{F}$ comes down to partitioning $\omega$ into pieces from $\mathcal{F}$, taking a
maximum of $|x(i)|$ for
$i$ from one piece of the partition, summing up those maxima and then minimizing the result for all possible partitions.

For certain families $\mathcal{F}$ the completion of $c_{00}$ in $\lVert \cdot \rVert^\mathcal{F}$ (which we will denote by $X^\mathcal{F}$) is a Banach space which is isometrically isomorphic to $X^*_\mathcal{F}$ (see Proposition \ref{for-partitions}). E.g. notice that if $\mathcal{F}$ consists of singletons, then $X_\mathcal{F}$ is isometrically isomorphic to $c_0$ and $X^\mathcal{F}$ is isometrically isomorphic to $\ell_1$. 

In general, if a family $\mathcal{F} \subseteq [\omega]^{<\omega}$ is compact, hereditary and covering $\omega$, then $X^\mathcal{F}$ shares many properties with $X^*_\mathcal{F}$. In fact, it was difficult for us to find a property distinguishing those
spaces apart from the fact that for many families $\mathcal{F}$ the space
$X^\mathcal{F}$ is not a Banach space! More precisely, $\lVert \cdot \rVert^\mathcal{F}$ does not need to satisfy the triangle inequality (and it does not e.g. for $\mathcal{F}$ being the Schreier family).

The lack of triangle inequality is not something welcomed in the theory of Banach spaces. However, there are still good reasons to study $\lVert \cdot \rVert^\mathcal{F}$.
First, if $\mathcal{F}\subseteq [\omega]^{<\omega}$ is compact, hereditary and covering $\omega$, then $\lVert \cdot
\rVert^\mathcal{F}$ enjoys some perverted form of triangle inequality. Namely, for every $x,y \in c_{00}$
\[ \lVert x+y \rVert^\mathcal{F} \leq 2 (\lVert x \rVert^\mathcal{F} + \lVert y \rVert^\mathcal{F}) \]
and so $\lVert \cdot \rVert^\mathcal{F}$ is a quasi-norm (see Section \ref{quasi-banach} for the
definition and basic facts concerning quasi-norms). We show that $X^\mathcal{F}$ is a quasi-Banach space. 

Second, the space $X^\mathcal{F}$ is indeed closely related to $X^*_\mathcal{F}$, as we show that $X^*_\mathcal{F}$ is the Banach envelope of $X^\mathcal{F}$ for compact $\mathcal{F}$ (Theorem \ref{final_isomorphism}). This notion was introduced independently by \cite{Peetre} and \cite{Shapiro} and studied extensively. The Banach envelope of a quasi-normed space $(X,\lVert\cdot\rVert)$ is the completion of $X$ with the norm given by the formula \[ \vvvert x \vvvert = \inf\Big\{\sum_{i\leq n} \lVert x_i \rVert\colon x = \sum_{i\leq n} x_i , n\in\omega\Big\}, \ \ x\in X. \] 
The norm defined above is clearly the largest norm on $X$ dominated by the quasi-norm $\lVert\cdot\rVert$, in other words, it is the Minkowski functional of the convex hull of the unit ball in $(X,\lVert\cdot\rVert)$. Alternatively, the Banach envelope can be described as the closure in $X^{**}$ of the canonical image of $X$. It follows that the dual spaces of $X$ and its Banach envelope coincide. For these and related facts see \cite[Chapter 2.4]{Kalton-F}. 

We shall prove that the space $X^\mathcal{F}$ can be embedded into $X^*_\mathcal{F}$ by a bounded operator $T_0$ of norm 1, that transports the set of extreme points of the unit ball of $X^\mathcal{F}$ onto the set of extreme points of the unit ball of $X^*_\mathcal{F}$   (see Proposition  \ref{extreme_to_extreme}). This fact implies that $X^*_\mathcal{F}$ is the Banach envelope of $X^\mathcal{F}$.

It is worth taking a look from the geometric point of view. The unit ball in $X^\mathcal{F}$ is typically non-convex. However, this ball and the unit ball in $X^*_\mathcal{F}$ have the same extreme points, and the closed convex hull of the ball in $X^\mathcal{F}$ is \emph{exactly} the unit ball of $X^*_\mathcal{F}$. 

The fact that $X^*_\mathcal{F}$ is the Banach envelope of $X^\mathcal{F}$ does not mean that these spaces are necessarily isomorphic. They are  if and only if there is $C>0$ such that for every sequence $(x_i)_{i\leq n}$
\[ \lVert \sum_{i\leq n} x_i \rVert^\mathcal{F} \leq C \sum_{i\leq n} \lVert x_i \rVert^\mathcal{F}. \]
As we have already mentioned, there are examples for which $X^\mathcal{F}$ is isometrically isomorphic to $X^*_\mathcal{F}$ (Proposition \ref{for-partitions}). On the other hand, it seems that typically those spaces are not isomorphic. In Example \ref{tree} we show a simple example of a compact family $\mathcal{F}$ and a set of vectors $(x_i)$ in $X^\mathcal{F}$ for which there is no $C>0$ as above. Then we show that 
the space $X^\mathcal{S}$ is not isomorphic to $X^*_\mathcal{S}$, for the Schreier family $\mathcal{S}$.

However, even if the spaces $X^\mathcal{F}$ and $X^*_\mathcal{F}$ are not isomorphic, the connection between $X^\mathcal{F}$ and $X^*_\mathcal{F}$ is so strong that they share properties that are typically not invariant under isomorphism. As we have mentioned the unit balls in those spaces have 'the same' extreme points. Also, for every compact hereditary family $\mathcal{F}$ the space $X^\mathcal{F}$ and $X^*_\mathcal{F}$ have isometrically isomorphic duals (Theorem \ref{isomorphism}).

We will argue that the quasi-norm $\lVert \cdot \rVert ^\mathcal{F}$ is much easier to handle than the dual norm of $X^*_\mathcal{F}$.

It seems that the space $X^*_\mathcal{S}$, the dual to the Scheier space, is a rather mysterious object and not much of it is known. Recently, Galego, Gonz\'{a}lez and Pello
(\cite{Galego}) proved that it is $\ell_1$-saturated (i.e. every closed infinite-dimensional subspace contains an isomorphic copy of
$\ell_1$) and it is known that it does not enjoy the Schur
property. In fact, for years it was an open problem whether such spaces, $\ell_1$-saturated but without Schur property, exist at all. 
The first example was constructed by Bourgain (\cite{Bourgain}) and then several other involved constructions of such spaces have been presented (see e.g. \cite{Hagler}, \cite{Popov}). The result of Galego, Gonz\'{a}lez and Pello says that you do not have to
\emph{construct} an
$\ell_1$-saturated space without a Schur property: the dual to Schreier space is already a good example (although the proof contained in \cite{Galego} is rather difficult)\footnote{The authors of \cite{Galego} prove this result en passant. They
do not mention it neither in the abstract nor in the introduction, nor they relate it
to the Bourgain construction. We are grateful to
Jes\'us Castillo for informing us about this result.}. Moreover, recent results of Causey and the third author (\cite{CauseyP}) imply the $\ell_1$-saturation of dual spaces of Schreier spaces of arbitrary order.

We show that the space $X^\mathcal{S}$ is also $\ell_1$-saturated (Theorem \ref{ell1}) and it does not have Schur property (Proposition \ref{Schur}). The proof is considerably simpler than those from \cite{Galego} and the previous constructions. In
fact, our proof works for all combinatorial Banach spaces generated by large families (see Section \ref{l1-schur} for the precise definition of largeness), in particular for the Schreier spaces of all orders. The assumption of largeness is rather
mild and so, although for some time it was not
clear if $\ell_1$-saturated spaces without Schur property exist at all, we show that the duals to combinatorial spaces \emph{typically} satisfies the conjunction of those properties. 

Of course, as we do not know if $X^\mathcal{F}$ is isomorphic to $X^*_\mathcal{F}$ (and as usually they are not), we cannot claim that the above result says something about $X^*_\mathcal{F}$ directly. Perhaps the relation between quasi-Banach spaces and their Banach envelopes is worth studying in general. E.g. one can ask the following question. 
\begin{problem}
Let $(X, \lVert \cdot \rVert)$ be a quasi-Banach space, and let $Y$ be its Banach envelope. 
Which properties of $X$ are shared by $Y$? In particular, if $X$ is $\ell_1$-saturated, is $Y$ $\ell_1$-saturated as well? 
\end{problem}
Note here that a Banach envelope of a non-locally convex quasi-Banach space contains $\ell_1^n$'s uniformly by \cite{Kalton}.

The article is organized as follows. In Section \ref{preliminaries} we introduce the basic notions and recall some facts concerning quasi-Banach spaces. In Section \ref{schreier-like}  we present basic definitions and facts about combinatorial Banach
spaces. In Section \ref{dual-norms} we deal with
the function defined by (\ref{dd2}).  We prove that $\lVert \cdot \rVert^{\mathcal{F}}$ is a nice quasi-norm and that $X^{\mathcal{F}}$ is a quasi-Banach space for every compact family $\mathcal{F}$ (which is hereditary and which covers $\omega$).
In
Section \ref{how-close} we prove that $X^\mathcal{F}$ is isometrically isomorphic to $X^*_\mathcal{F}$ for every compact hereditary family $\mathcal{F}$ of finite subsets of $\omega$. In Section \ref{l1-schur} we prove that $X^{\mathcal{F}}$ is
$\ell_{1}$-saturated and that it does not have the Schur property, for $\mathcal{F}$ being large, in particular for the Schreier families of any order.

\section{Acknowledgements}

	We thank Barnab\'as Farkas, and Alberto Salguero Alarc\'on for the discussions concerning the subject of this article. We are grateful to the anonymous referee for valuable suggestions.

\section{Preliminaries}\label{preliminaries}

	In this section we present standard facts and notations, used in the subsequent sections.

	By $\omega$ we denote the set of natural numbers. For various technical reasons for the sake of this article we will not recognize 0 as a natural number. If $k \in \omega$ and $X \subseteq \omega$, then $[X]^{\leq k} = \lbrace A \subseteq X: \lvert A \rvert \leq k \rbrace$. Similarly, $[X]^{k}$ denotes the family of all subsets of $X$ with $k$ elements and $[X]^{< \omega}$ is the family of all finite subsets of $X$.

	We will consider the standard topology on the Cantor set. 
	By $\mathcal{P}(\omega)$ we denote the power set of $\omega$ which we identify with the Cantor set $2^{\omega}$. 
	Henceforth we do not distinguish a subset of $\omega$ from its characteristic function, so if we write that a family is open, closed, compact, etc. we mean openness, closedness, and compactness of a subset of $2^{\omega}$. In addition, we use interchangeably the following notations
	\[
	j \in A \hspace{0.1cm} (\notin A)
	\]
	and
	\[
	A(j) = 1 \hspace{0.1cm} (=0).
	\]

	If $A,B \subseteq \omega$ are two finite sets, then by $A < B$ we mean that $\max(A) < \min(B)$.  In what follows $\mathcal{F}$ will always denote a family of finite subsets of $\omega$ which is hereditary (i.e. it is closed under taking subsets)
	and which covers
	$\omega$. If $\mathcal{A}$ is a family of subsets of $\omega$ then by its \emph{hereditary closure}  we mean the smallest hereditary family containing $\mathcal{A}$.
	
\begin{df} A family $\mathcal{P}\subseteq \mathcal{P}(\omega)$ is a \emph{partition} if 
		\begin{itemize}
			\item $\emptyset\in \mathcal{P}$,
			\item $\bigcup \mathcal{P} = \omega$,
			\item $\mathcal{P}$ consists of pairwise disjoint elements.
		\end{itemize}
	\end{df}
	Notice that our definition of partition is non-standard. We want $\emptyset$ to be an element of a partition, since then it forms a compact subset of $2^\omega$ and we are going to use this fact later on. 
	We denote by $\mathbb{P}_{\mathcal{F}}$ the family of all partitions $\mathcal{P}\subseteq \mathcal{F}$.

For $x \in \mathbb{R}^{\omega}$ by its \textit{support} we mean the set $\mathrm{supp}(x) = \lbrace n \in \omega: x(n) \neq 0 \rbrace$ and $c_{00}$ is the set of all sequences $x$ for which $\mathrm{supp}(x)$ is finite.   
	For $A \subseteq \omega$ the function $P_{A}: \mathbb{R}^{\omega} \rightarrow \mathbb{R}^{\omega}$ given by 
	\[
	P_{A}(x)(k) = 
	\begin{cases}
	x(k), & \text{if } k \in A, \\
	0, & \text{otherwise,}
	\end{cases}
	\]
	is called the \textit{projection} of $x$ onto coordinates from $A$. In particular if $n \in \omega$, then by $P_{n}$ we denote the projection onto the initial segment $\lbrace 1,2,...,n \rbrace$ and by $P_{\omega \setminus n}$ - the projection onto
	the complement of this segment.

\subsection{Quasi-Banach spaces} \label{quasi-banach}
\begin{df} \label{quasi_norm_def}
A \emph{quasi-norm} on a vector space $X$ is a function $\lVert \cdot \rVert: X \rightarrow \mathbb{R}$ satysfing 
\begin{itemize}
\item $\lVert x \rVert = 0 \Leftrightarrow x = 0$,
\item For every  $\lambda \in \mathbb{R}$ $\lVert \lambda x \rVert = | \lambda | \lVert x \rVert$,
\item There is $c \geq 1$ such that $\lVert x + y \rVert \leq c ( \lVert x \rVert + \lVert y \rVert )$.
\end{itemize}
\end{df}

The minimal constant $c$ working above is sometimes called the \emph{modulus of concavity} of the quasi-norm. In particular, for $c=1$ we get the definition of a norm.
In what follows sometimes we will allow quasi-norms to take possibly infinite values. If $\lVert \cdot \rVert$ is a quasi-norm (taking only finite values) on a vector
space $X$, then the pair $(X, \lVert \cdot \rVert)$ is called \emph{quasi-normed space}. 






Note that a quasi-Banach space $X$ which is not isomorphic to a Banach space cannot be locally convex. By this, it turns out that results which hold in Banach spaces and are based somehow on the local convexity (e.g.
Hahn-Banach extension property or Krein-Milman theorem) are no longer true, in general, in quasi-Banach spaces (see \cite{Kalton_quasi}). However, the standard results of Banach space theory such as the Open Mapping Theorem, Uniform Boundedness
Principle, and Closed Graph Theorem can be applied in quasi-Banach spaces since they depend only on the completeness of the space. 
%


\section{Combinatorial spaces} \label{schreier-like}

In \cite{NaFa} the authors presented an approach towards Banach spaces with unconditional bases motivated by the theory of analytic P-ideals on $\omega$. 
We will briefly overview the basic definitions and results which we are going to use in more general case, namely for quasi-Banach spaces. 

\begin{df}\label{nicenorm} \cite{NaFa}
	We say that a function $\varphi\colon \mathbb{R}^{\omega} \rightarrow [0,\infty]$ is \textit{nice}, if 
	\begin{enumerate}[(i)]
		\item (\textit{Non-degeneration})  $\varphi(x) < \infty$ for every $x \in c_{00}$,
	\item (\textit{Monotonicity}) For every $x,y \in \mathbb{R}^\omega$, if $\lvert x(n) \rvert \leq \lvert y(n) \rvert$ for ach each $n \in \omega$, then $\varphi(x) \leq \varphi(y)$,
	\item (\textit{Lower semicontinuity}) $\lim \limits_{n \rightarrow \infty} \varphi(P_{n}(x)) = \varphi(x)$ for every $x\in \mathbb{R}^\omega$.
	\end{enumerate} 
\end{df}
For an extended quasi-norm $\varphi: \mathbb{R}^{\omega} \rightarrow [0,\infty]$ define
	\[
		FIN(\varphi) = \lbrace x \in \mathbb{R}^{\omega}: \varphi(x) < \infty \rbrace,\]
		\[EXH(\varphi) = \lbrace x \in \mathbb{R}^{\omega}: \lim \limits_{n \rightarrow \infty} \varphi(P_{\omega \setminus n}(x)) = 0 \rbrace.\]

\begin{thm}\label{5.1}(\cite[Proposition 5.1]{NaFa}) If $\varphi: \mathbb{R}^{\omega} \rightarrow [0,\infty]$ is a nice extended \emph{norm}, then $FIN(\varphi)$ and $EXH(\varphi)$ (normed with $\varphi$) are Banach spaces, and $EXH(\varphi)$ has an unconditional basis consisting of the unit vectors.
	\end{thm}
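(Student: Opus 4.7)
The plan is to verify in turn that $FIN(\varphi)$ is a normed space, that it is complete, that $EXH(\varphi)$ is a closed subspace, and finally that the unit vectors form an unconditional basis of $EXH(\varphi)$. Throughout, niceness does all the work: non-degeneration keeps $c_{00}$ inside $FIN(\varphi)$, monotonicity controls individual coordinates, and lower semicontinuity lets us pass finite-dimensional estimates to the infinite-dimensional limit.

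First I would record that $FIN(\varphi)$ is a vector space and $\varphi$ is a norm on it (finiteness on sums follows from the triangle inequality we are assuming here, positive homogeneity and definiteness are given). The key preliminary observation is that by monotonicity, for each $k$ with $\varphi(e_k)>0$ we have $|x(k)|\,\varphi(e_k)=\varphi(x(k)e_k)\leq \varphi(x)$, so convergence in the $\varphi$-norm forces coordinatewise convergence. This coordinatewise control is what makes the whole argument go through.

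For completeness of $FIN(\varphi)$, let $(x_n)$ be $\varphi$-Cauchy. By the above, $(x_n(k))$ is Cauchy in $\mathbb{R}$ for every $k$, so there is a pointwise limit $x\in\mathbb{R}^\omega$. Fix $\varepsilon>0$ and $N$ with $\varphi(x_l-x_n)<\varepsilon$ for $l,n\geq N$. For any $m$, the vector $P_m(x_l-x_n)$ lies in the finite-dimensional subspace spanned by $e_1,\dots,e_m$, on which $\varphi$ is a norm (hence equivalent to any other), so
\[
\varphi(P_m(x-x_n))=\lim_{l\to\infty}\varphi(P_m(x_l-x_n))\leq \liminf_{l\to\infty}\varphi(x_l-x_n)\leq\varepsilon,
\]
using monotonicity for the last inequality. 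Letting $m\to\infty$ and applying lower semicontinuity gives $\varphi(x-x_n)\leq\varepsilon$, so $x-x_n\in FIN(\varphi)$, hence $x\in FIN(\varphi)$, and $x_n\to x$ in $\varphi$. This step is where I expect the only real subtlety, namely the interplay between pointwise convergence, equivalence of norms on finite-dimensional subspaces, and lower semicontinuity.

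Next, $EXH(\varphi)$ is closed in $FIN(\varphi)$: if $x_n\to x$ in $\varphi$ with $x_n\in EXH(\varphi)$, then given $\varepsilon>0$ pick $n$ with $\varphi(x-x_n)<\varepsilon/2$ and then $N$ large enough so that $\varphi(P_{\omega\setminus k}(x_n))<\varepsilon/2$ for $k\geq N$; using monotonicity (to bound $\varphi(P_{\omega\setminus k}(x-x_n))$ by $\varphi(x-x_n)$) and the triangle inequality, $\varphi(P_{\omega\setminus k}(x))<\varepsilon$. So $EXH(\varphi)$ is itself a Banach space. Finally, for $x\in EXH(\varphi)$ the definition gives $\varphi(x-P_n(x))=\varphi(P_{\omega\setminus n}(x))\to 0$, so $x=\sum_k x(k)e_k$ in the $\varphi$-norm. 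Uniqueness of the expansion is immediate from the coordinatewise control established above, which identifies the $k$-th coefficient as $x(k)$. Unconditionality is then free from monotonicity: for any finite $F$ and signs $\varepsilon_k\in\{-1,1\}$, the vectors $\sum_{k\in F}x(k)e_k$ and $\sum_{k\in F}\varepsilon_k x(k)e_k$ have identical absolute values coordinatewise, so $\varphi$ gives them the same norm, which yields the unconditional basis constant $1$.
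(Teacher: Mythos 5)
Your proof is correct. The paper itself gives no proof of this statement — it is imported from \cite{NaFa} — so the natural comparison is with the proof of Theorem \ref{quasi-Banach}, the quasi-norm analogue that the paper proves by mimicking the argument of \cite{NaFa}. The skeleton is the same (coordinatewise control from monotonicity via $|x(k)|\varphi(e_k)\leq\varphi(x)$, completeness of $FIN(\varphi)$, closedness of $EXH(\varphi)$), but your completeness step runs differently: you pass to the limit in $l$ inside each finite-dimensional block $P_m$ using equivalence of norms on finite-dimensional spaces, bound $\varphi(P_m(x-x_n))$ by the Cauchy bound via monotonicity, and then let $m\to\infty$ by lower semicontinuity — a clean two-limit argument that invokes the genuine triangle inequality only at the very end to conclude $x\in FIN(\varphi)$ and in the closedness of $EXH(\varphi)$. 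The paper's version instead estimates $\varphi(P_M(y))\leq 2\bigl(\varphi(P_M(y-x_n))+\varphi(P_M(x_n))\bigr)$ coordinate by coordinate, using additivity on disjoint supports, and establishes $x_n\to y$ by a separate contradiction argument, precisely because in the quasi-norm setting one cannot freely sum infinitely many error terms. Your route buys a shorter and more transparent argument in the normed case; the paper's route is the one that survives when only a quasi-norm is available. Your treatment of the basis (norm convergence $P_n(x)\to x$, uniqueness from coordinatewise control, and $1$-unconditionality from the sign-invariance and suppression property supplied by monotonicity) is also correct and covers the part of the statement that the paper leaves entirely to \cite{NaFa}; the only cosmetic omission is the one-line remark that $EXH(\varphi)\subseteq FIN(\varphi)$, which follows from splitting $x=P_n(x)+P_{\omega\setminus n}(x)$ for $n$ large.
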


	Note that the above theorem can be reversed: every Banach space with an unconditional basis is isometrically isomorphic to $EXH(\varphi)$ for some nice extended norm $\varphi$ (\cite[Proposition 5.1]{NaFa}). Now we will present two structure
	theorems about the spaces $FIN(\varphi)$ and $EXH(\varphi)$. 

	\begin{thm}\label{5.4}(\cite[Theorem 5.4]{NaFa})\label{Banach} Suppose that  $\varphi: \mathbb{R}^{\omega} \rightarrow [0,\infty]$ is a nice extended norm. Then the following conditions are equivalent:
		\begin{itemize}
			\item $EXH(\varphi) = FIN(\varphi)$,
			\item $EXH(\varphi)$ does not contain an isomorphic copy of $c_0$.
		\end{itemize}
	\end{thm}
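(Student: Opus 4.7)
The plan is to reduce the equivalence to the classical dichotomy that an unconditional basis of a Banach space is boundedly complete if and only if the ambient space contains no isomorphic copy of $c_{0}$, and then to apply this principle to the canonical unconditional basis of unit vectors in $EXH(\varphi)$ provided by Theorem \ref{5.1}. Note first that the inclusion $EXH(\varphi) \subseteq FIN(\varphi)$ is automatic: for $x \in EXH(\varphi)$ and large $n$, the triangle inequality combined with lower semicontinuity gives $\varphi(x) \leq \varphi(P_n x) + \varphi(P_{\omega \setminus n} x) < \infty$. Unwinding the definitions, $x \in FIN(\varphi)$ says exactly that the partial sums $(P_n x)$ are $\varphi$-bounded, while $x \in EXH(\varphi)$ says that these partial sums form a $\varphi$-Cauchy sequence; hence the identity $FIN(\varphi) = EXH(\varphi)$ is literally the boundedly-complete property of the unit basis.

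For the direction $FIN(\varphi) \neq EXH(\varphi) \Rightarrow c_{0} \hookrightarrow EXH(\varphi)$, I would fix $x \in FIN(\varphi) \setminus EXH(\varphi)$ and use the failure of $\varphi(P_{\omega \setminus n} x) \to 0$ to pick $\varepsilon > 0$ and successive finite intervals $F_1 < F_2 < \dots$ with $\varphi(y_k) \geq \varepsilon$, where $y_k := P_{F_k}(x) \in c_{00} \subseteq EXH(\varphi)$. Monotonicity of $\varphi$ together with the disjointness of the supports $F_k$ then yields, for any scalars $(a_k)$ with $M := \max_k |a_k|$,
\[
    \varepsilon M \;\leq\; \max_k |a_k|\,\varphi(y_k) \;\leq\; \varphi\!\left(\sum_k a_k y_k\right) \;\leq\; \varphi(M x) \;=\; M\,\varphi(x),
\]
so $(y_k)$ is equivalent to the $c_{0}$-unit basis and spans an isomorphic copy of $c_{0}$ inside $EXH(\varphi)$.

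For the converse, assume $EXH(\varphi)$ contains a subspace $Y$ isomorphic to $c_{0}$. The plan is to invoke the Bessaga--Pe\l czy\'nski selection principle: since $(e_n)$ is an unconditional basis of $EXH(\varphi)$, one extracts from $Y$ a seminormalized block sequence $(u_k)$ of $(e_n)$ equivalent to the $c_{0}$-basis. The partial sums $s_n := \sum_{k \leq n} u_k$ are then uniformly $\varphi$-bounded (by the equivalence with $c_{0}$) but do not converge. Let $x \in \mathbb{R}^{\omega}$ be the coordinate sequence obtained by concatenating the nonzero coefficients of the $u_k$; then $P_n x$ agrees with $s_n$ along the supports of $u_1,\dots,u_n$, so by lower semicontinuity $\varphi(x) = \lim_n \varphi(P_n x) < \infty$, whence $x \in FIN(\varphi)$, while $\varphi(P_{\omega \setminus N}(x)) \geq \varphi(u_{k}) \not\to 0$ for $k$ large. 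Thus $x \in FIN(\varphi) \setminus EXH(\varphi)$.

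The main obstacle is this last step, namely the Bessaga--Pe\l czy\'nski extraction of a $c_{0}$-equivalent block basis from an abstract copy of $c_{0}$ in a space with an unconditional basis. The argument is standard for Banach spaces, but in our setting one must ensure it carries over to the norm $\varphi$, whose only continuity property is lower semicontinuity; once the block basis is in hand, the remainder of the proof reduces to the monotonicity and disjoint-support computations outlined above.
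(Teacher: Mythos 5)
Your argument is correct. Note, however, that the paper does not prove this statement at all: it is quoted verbatim from \cite[Theorem 5.4]{NaFa}, so there is no in-text proof to compare against. What you give is the standard James-type dichotomy for unconditional bases (boundedly complete if and only if no copy of $c_0$), correctly specialized to the unit vector basis of $EXH(\varphi)$: the identification of $FIN(\varphi)=EXH(\varphi)$ with bounded completeness uses exactly monotonicity (so $\varphi(P_nx)$ increases) and lower semicontinuity (so it increases \emph{to} $\varphi(x)$), and your gliding-hump extraction of disjointly supported blocks $y_k=P_{F_k}(x)$ with $\varphi(y_k)\geq\varepsilon$ from $x\in FIN(\varphi)\setminus EXH(\varphi)$, together with the two-sided estimate $\varepsilon\max_k|a_k|\leq\varphi(\sum_k a_ky_k)\leq\varphi(x)\max_k|a_k|$, is exactly right. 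The one thing I would push back on is your closing worry: the Bessaga--Pe\l czy\'nski selection principle is not an obstacle here. By Theorem \ref{5.1}, $(EXH(\varphi),\varphi)$ is an honest Banach space in which $(e_n)$ is an unconditional Schauder basis, so the coordinate functionals are continuous and the classical extraction of a seminormalized block basic sequence equivalent to the $c_0$-basis applies verbatim; lower semicontinuity is a property of the \emph{extended} norm on $\mathbb{R}^\omega$ and plays no role inside $EXH(\varphi)$ itself. (One could even bypass Bessaga--Pe\l czy\'nski by quoting James' theorem that an unconditional basis of a space containing no copy of $c_0$ is boundedly complete, but your self-contained route is fine.)
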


	\begin{thm}\label{5.5}(\cite[Proposition 5.5]{NaFa}) Suppose that  $\varphi: \mathbb{R}^{\omega} \rightarrow [0,\infty]$ is a nice extended norm. Then the following conditions are equivalent:
		\begin{itemize}
			\item $FIN(\varphi)$ is isometrically isomorphic to $EXH(\varphi)^{**}$,
			\item $EXH(\varphi)$ does not contain an isomorphic copy of $\ell_1$.
		\end{itemize}
	\end{thm}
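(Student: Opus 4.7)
The plan is to reduce the statement to two classical facts about unconditional bases: (1) an unconditional basis $(e_n)$ of a Banach space $X$ is shrinking if and only if $X$ does not contain an isomorphic copy of $\ell_1$; and (2) if $(e_n)$ is a shrinking basis of $X$, then $X^{**}$ is isometrically realized as the set of formal series $\sum a_n e_n$ for which $\sup_N \bigl\|\sum_{n\leq N} a_n e_n\bigr\| < \infty$, with norm equal to that supremum. By Theorem \ref{5.1}, the unit vectors form an unconditional basis of $EXH(\varphi)$, so both facts apply.

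The first step is to identify $FIN(\varphi)$ with the ''sup-bounded'' space appearing in fact (2). By monotonicity and lower semicontinuity of $\varphi$, for every $x\in\mathbb{R}^{\omega}$ the sequence $\varphi(P_n x)$ is nondecreasing with supremum $\varphi(x)$. Consequently $x\in FIN(\varphi)$ iff $\sup_n \varphi(P_n x) < \infty$, and in that case the $FIN$-norm of $x$ equals $\sup_n \varphi(P_n x) = \sup_n \|P_n x\|_{EXH(\varphi)}$. Thus the natural assignment $x \mapsto (x(n))_{n}$ isometrically identifies $FIN(\varphi)$ with the space of formal series over $(e_n)$ whose partial sums are norm-bounded in $EXH(\varphi)$.

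For the direction ''no copy of $\ell_1$ $\Rightarrow$ isometric isomorphism'', assume $(e_n)$ is shrinking. I would define $T \colon FIN(\varphi) \to EXH(\varphi)^{**}$ by letting $T(x)$ be the weak-$*$ cluster point of $(P_n x)_n$; for a shrinking basis, for each $f\in EXH(\varphi)^{*}$ the sequence $f(P_n x)$ converges (because the biorthogonal functionals span $EXH(\varphi)^{*}$ when the basis is shrinking), so the weak-$*$ limit actually exists. Surjectivity and isometry of $T$ follow immediately from fact (2) combined with the identification in the previous paragraph. For the converse, I would argue contrapositively: if $\ell_1$ embeds into $EXH(\varphi)$, fact (1) says $(e_n)$ is not shrinking, so there exist $f\in EXH(\varphi)^{*}$, $\varepsilon>0$, and a normalized block sequence $(u_k)$ with $|f(u_k)|\geq \varepsilon$. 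Taking a weak-$*$ cluster point of $(u_k)$ in $EXH(\varphi)^{**}$ produces an element whose coordinate sequence is identically zero (because each $e_j^{*}$ vanishes on the tails of a block sequence) and yet whose image under $f$ is nonzero; hence this element cannot lie in the image of $FIN(\varphi)$ under the natural sequence embedding, so $FIN(\varphi)$ is strictly smaller than $EXH(\varphi)^{**}$ and in particular not isometric to it.

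The main obstacle I expect is the converse direction: verifying that the classical ''shrinking iff no $\ell_1$'' result for unconditional bases can be invoked in the setting of nice extended norms (where one works with the distinguished concrete basis rather than an abstract one), and pinning down precisely that the weak-$*$ cluster points arising from block sequences escape the canonical sequence space $FIN(\varphi)$. A cleaner alternative, if the above block-sequence argument becomes cumbersome, is to quote Theorem \ref{5.4}-style biduality arguments from \cite{NaFa} directly and extract the conclusion as a symmetric counterpart to Theorem \ref{5.4}.
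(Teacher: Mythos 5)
The paper does not prove this statement at all --- it is quoted verbatim from \cite[Proposition 5.5]{NaFa} --- so there is no internal proof to compare against; your argument can only be judged on its own terms, and on those terms it is correct and is surely the same route the cited source takes. The two classical inputs are exactly right: James' theorem for unconditional bases (shrinking iff no copy of $\ell_1$), which applies because Theorem \ref{5.1} makes $(e_n)$ a $1$-unconditional basis of $EXH(\varphi)$, and the canonical representation of $X^{**}$ as the bounded-partial-sum sequence space for a monotone shrinking basis, which is isometric here precisely because monotonicity of $\varphi$ gives $\lVert P_n\rVert=1$ and lower semicontinuity gives $\varphi(x)=\sup_n\varphi(P_nx)$, identifying $FIN(\varphi)$ isometrically with that sequence space. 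Two small points deserve attention. First, in the converse you should pass to a subsequence of the block sequence $(u_k)$ on which $f(u_k)$ has constant sign (or at least note that some weak-$*$ cluster point of that subsequence satisfies $|u^{**}(f)|\geq\varepsilon$); without this, a cluster point of the full sequence could a priori kill $f$. Second, your converse shows only that the \emph{canonical} coordinate identification of $EXH(\varphi)^{**}$ with $FIN(\varphi)$ fails (the cluster point has zero coordinate sequence but is nonzero), not that no abstract isometric isomorphism can exist; this is the correct reading of the statement --- the present paper uses Theorem \ref{5.5} in the proof of Theorem \ref{isomorphism} precisely through the concrete identification of $(X_\mathcal{F})^{**}$ with the sequence space $FIN(\lVert\cdot\rVert_\mathcal{F})$ --- but it is worth stating explicitly that the isomorphism asserted is the canonical one, since that is what the "only if" direction actually disproves.
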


In this article we consider particular norms induced by families of subsets of $\omega$. More precisely, if $\mathcal{F} \subseteq \mathcal{P}(\omega)$ is hereditary and covering $\omega$, then for a sequence $x$ consider the following expression

	\begin{equation} \label{Lower_norm} 
		\lVert x \rVert_{\mathcal{F}} = \sup_{F \in \mathcal{F}} \sum_{k \in F} \lvert x(k) \rvert.
	\end{equation}
	It is not difficult to check that this is a nice extended norm and so $EXH(\lVert \cdot \rVert_{\mathcal{F}})$ is a Banach space (which we will denote by $X_\mathcal{F}$). See \cite{NaFa} for examples of spaces of this form.
	
	The spaces of the form $X_\mathcal{F}$ are quite well studied for compact families $\mathcal{F}$ (see e.g. \cite{Jordi-Stevo}, \cite{Castillo-Gonzales}).  The most notable representative here is the Schreier space  \cite{Schreier}, 
	$X_\mathcal{S}$, where $\mathcal{S}$ is so-called \textit{Schreier family}:
\[
\mathcal{S} = \lbrace A \subseteq \omega: \lvert A \rvert \leq \min(A) \rbrace.
\]


\section{Dual quasi-norms} \label{dual-norms}

Now we are going to introduce the main notion of this article. 

	\begin{equation} \label{Upper_norm}
		\lVert x \rVert^{\mathcal{F}} = \inf_{\mathcal{P} \in \mathbb{P}_{\mathcal{F}}} \sum_{F \in \mathcal{P}} \sup_{k \in F} \lvert x(k) \rvert.
	\end{equation}

If $\mathcal{P}$ is a partition of $\omega$, then by $||x||^\mathcal{P}$ we will denote $\| x\|^\mathcal{F}$ where $\mathcal{F}$ is the hereditary closure of $\mathcal{P}$. Note that in this case  
	\[
	\lVert x \rVert^{\mathcal{P}} = \sum_{P \in \mathcal{P}} \sup_{k \in P} \lvert x(k) \rvert.
	\]
	So, for a family $\mathcal{F} \subseteq \mathcal{P}(\omega)$ we have 
	\[ \lVert x \rVert^\mathcal{F} = \inf_{\mathcal{P}\in \mathbb{P}_\mathcal{F}} \lVert x \rVert^\mathcal{P}. \]

Note that in general the expression (\ref{Upper_norm}) does not define a norm. 

\begin{eg}\label{notnorm} Let  $\mathcal{S}$ be the Schreier family. Consider the finitely supported sequences $x = (0,1,1,0,0,0,...)$, $y = (0,0,1,1,1,0,0,0,...)$. We can easily check that $\lVert x
\rVert^{\mathcal{S}} = \lVert y \rVert^{\mathcal{S}} = 1$, but $\lVert x+y \rVert^{\mathcal{S}} = 3$, so the triangle inequality is not satisfied.
\end{eg}

However, it turns out that $\lVert \cdot \rVert^{\mathcal{F}}$ is a quasi-norm, at least if
$\mathcal{F}$ is a compact family. Moreover, it is a nice quasi-norm (in the sense of Definition \ref{nicenorm}).


Instead of showing that $\lVert \cdot \rVert^\mathcal{F}$ is a quasi-norm and then showing that it is nice, we will do the opposite: first, we will check that $\lVert \cdot \rVert^\mathcal{F}$ satisfies all the conditions of
Definition \ref{nicenorm}. The reason is that lower semicontinuity will allow us to focus on finitely supported sequences.

It is easy to check that if $\mathcal{F}$ is a family covering $\omega$, then $\lVert \cdot \rVert^\mathcal{F}$ is monotone and non-degenerated (in the sense of Definition \ref{nicenorm}). However, it is not necessarily lower semicontinuous. Consider e.g. the
	family $\mathcal{F}$ of all finite subsets of $\omega$ and $x$ defined by $x(k)=1$ for each $k$. Then $\lVert P_n(x) \rVert^\mathcal{F} = 1$ for each $n$ but $\lVert x \rVert^\mathcal{F} = \infty$. We will show that if we additionally assume that
	$\mathcal{F}$ is compact, then $\lVert \cdot \rVert^\mathcal{F}$ is lower semicontinuous and so it is nice. 
	
	\begin{thm} \label{nice_condition}
If $\mathcal{F} \subseteq \mathcal{P}(\omega)$ is compact, hereditary and covering $\omega$, then 
		$\lVert \cdot \rVert^{\mathcal{F}}$ is a nice quasi-norm. 
	\end{thm}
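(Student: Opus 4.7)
I would verify the three conditions of Definition \ref{nicenorm} and then establish the quasi-triangle inequality with constant $2$. The first two conditions are immediate: since $\mathcal{F}$ is hereditary and covers $\omega$, every singleton $\{k\}$ belongs to $\mathcal{F}$, so the all-singletons partition gives $\lVert x \rVert^\mathcal{F} \le \sum_k |x(k)| < \infty$ for $x \in c_{00}$, while monotonicity of each map $x \mapsto \sup_{k \in F}|x(k)|$ propagates through the sum and the infimum.

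The heart of the proof is lower semicontinuity. Monotonicity ensures $(\lVert P_n(x)\rVert^\mathcal{F})_n$ is increasing and bounded above by $\lVert x \rVert^\mathcal{F}$, so it suffices to show $\lVert x \rVert^\mathcal{F} \le L$ whenever $L := \lim_n \lVert P_n(x)\rVert^\mathcal{F} < \infty$. For each $n$ I would choose $\mathcal{P}_n \in \mathbb{P}_\mathcal{F}$ with $\lVert P_n(x)\rVert^{\mathcal{P}_n} \le L + 1/n$. The convention $\emptyset \in \mathcal{P}_n$ is essential here: it makes each $\mathcal{P}_n$ closed in $2^\omega$---the only possible accumulation point of a sequence of pairwise disjoint finite sets is $\emptyset$---so each $\mathcal{P}_n$ is a compact subset of the compact space $\mathcal{F}$, and the sequence $(\mathcal{P}_n)$ lives in the Vietoris hyperspace $K(\mathcal{F})$ of compact subsets of $\mathcal{F}$, which is itself compact. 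Pass to a convergent subsequence $\mathcal{P}_{n_k} \to \mathcal{P}$.

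Two facts about $\mathcal{P}$ then remain. First, $\mathcal{P} \in \mathbb{P}_\mathcal{F}$: clearly $\emptyset \in \mathcal{P}$; each $m \in \omega$ is covered because the unique piece of $\mathcal{P}_{n_k}$ containing $m$ admits a sub-subsequential limit in $\mathcal{P}$ which still contains $m$; and any two distinct pieces $F \ne F'$ of $\mathcal{P}$ must be disjoint, since the Vietoris $\liminf$ realizes them as limits of $F_k, F'_k \in \mathcal{P}_{n_k}$ eventually distinct (use any coordinate in their symmetric difference) and hence disjoint. Second, $\lVert x \rVert^\mathcal{P} \le L$: fix distinct nonempty $F^1, \dots, F^N \in \mathcal{P}$ with $F^i = \lim_k F^i_k$, $F^i_k \in \mathcal{P}_{n_k}$; finiteness of each $F^i$ gives $F^i \subseteq F^i_k$ and $\max F^i \le n_k$ for $k$ large, whence $\sup_{j \in F^i_k}|P_{n_k}(x)(j)| \ge \sup_{j \in F^i}|x(j)|$. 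Summing and using $\lVert P_{n_k}(x)\rVert^{\mathcal{P}_{n_k}} \le L + 1/n_k$ yields $\sum_{i \le N}\sup_{j \in F^i}|x(j)| \le L$; letting $N \to \infty$ gives $\lVert x \rVert^\mathcal{P} \le L$, hence $\lVert x \rVert^\mathcal{F} \le L$.

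For the quasi-triangle inequality I would first establish the sharp additivity $\lVert u+v\rVert^\mathcal{F} \le \lVert u \rVert^\mathcal{F} + \lVert v \rVert^\mathcal{F}$ when $u, v$ have disjoint supports $S, T$: near-optimal partitions $\mathcal{P}_u, \mathcal{P}_v$ combine by heredity into $\{F \cap S : F \in \mathcal{P}_u\} \cup \{F \cap T : F \in \mathcal{P}_v\} \cup \{\{k\} : k \notin S \cup T\} \cup \{\emptyset\} \in \mathbb{P}_\mathcal{F}$, and the bound follows directly. For arbitrary $x, y$, set $A := \{k : |x(k)| \ge |y(k)|\}$ and $B := \omega \setminus A$; the sequence $z$ defined by $z(k) := \max(|x(k)|, |y(k)|)$ equals $P_A(|x|) + P_B(|y|)$, a sum of two sequences with disjoint supports and dominated by $|x|, |y|$ respectively, so the disjoint-support additivity and monotonicity give $\lVert z \rVert^\mathcal{F} \le \lVert x \rVert^\mathcal{F} + \lVert y \rVert^\mathcal{F}$. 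Since $|x+y| \le 2z$ coordinatewise, monotonicity and homogeneity yield $\lVert x+y\rVert^\mathcal{F} \le 2(\lVert x \rVert^\mathcal{F} + \lVert y \rVert^\mathcal{F})$. The main obstacle is the compactness step in the lower semicontinuity argument: a piece $F^i_k$ can properly contain its limit $F^i$ by gathering coordinates drifting to infinity, and the inequality $\sup_{j \in F^i_k}|P_{n_k}(x)(j)| \ge \sup_{j \in F^i}|x(j)|$ survives only because $P_{n_k}(x)$ vanishes beyond $n_k$, so the extra coordinates contribute nothing to the approximating sum.
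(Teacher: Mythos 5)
Your proof is correct. The lower semicontinuity argument is essentially the paper's: both pass to a Vietoris-limit partition $\mathcal{P}$ of near-optimal partitions for the truncations $P_n(x)$, using exactly the observation that $\emptyset\in\mathcal{P}_n$ makes each partition a compact subset of $\mathcal{F}$ and that $\mathbb{P}_\mathcal{F}$ is compact in $\mathcal{K}(\mathcal{F})$. The paper argues by contradiction and transfers the estimate from $\mathcal{P}$ back to some $\mathcal{P}_k$ by finding a $\mathcal{P}_k$ whose trace on $\{1,\dots,N\}$ coincides with that of $\mathcal{P}$ (via a basic Vietoris neighbourhood), whereas you transfer in the other direction, from $\mathcal{P}_{n_k}$ to $\mathcal{P}$, using that each finite limit piece $F^i$ is eventually contained in its approximant $F^i_k$ and that $P_{n_k}(x)$ vanishes beyond $n_k$; the two mechanisms are interchangeable and you correctly identify the one subtlety (pieces can shrink in the limit, which only helps). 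Where you genuinely diverge is the quasi-triangle inequality: the paper (Theorem \ref{quasi-quasi}) intertwines the two near-optimal partitions after reordering them so that the piecewise maxima are non-increasing, and bounds the alternating sum $(a_1+b_1)+(b_1+a_2)+\cdots$; you instead dominate $|x+y|$ by $2\max(|x|,|y|)=2\bigl(P_A(|x|)+P_B(|y|)\bigr)$ and invoke disjoint-support additivity plus monotonicity and homogeneity. Your route is shorter and avoids the reordering bookkeeping, at the cost of not exhibiting the explicit combined partition for $x+y$ itself (which the paper's intertwining does and reuses in spirit elsewhere); both yield the constant $2$.
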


Before we start the proof we recall some definitions and facts about the Vietoris topology. 

Fix a compact $\mathcal{F}\subseteq \mathcal{P}(\omega)$. Every partition can be considered as a subset of $2^{\omega}$ and thus we can treat the set $\mathbb{P}_{\mathcal{F}}$ as a subset of the power set of
	$2^{\omega}$. We can endow this set with the Vietoris topology. 

	\begin{df} 
	Let $X$ be a compact topological space. By $\mathcal{K}(X)$ denote the family of all closed subsets of $X$. The \textit{Vietoris topology} is the one generated by sets of the form 
	\begin{equation} \label{Vietoris}
		\big <U_{1},U_{2},...,U_{n} \big > = \lbrace K \in \mathcal{K}(X): K \subseteq \bigcup_{i \leq n} U_{i} \wedge \forall i \leq n \ K \cap U_{i} \neq \emptyset \rbrace,
	\end{equation} 
	where $U_{i}$ are open subsets of $X$. 
	\end{df} 

	Note that if $X$ is a compact space, then $K(X)$ endowed with the Vietoris topology is compact as well. Also, $K(X)$ is metrizable (by the Hausdorff metric).

	In our case, the role of $X$ is played by $\mathcal{F}$. According to the above, we would like to consider $\mathbb{P}_{\mathcal{F}}$ as a subspace of $\mathcal{K}(\mathcal{F})$. Notice that according to our definition of partition, it contains
	$\emptyset$ and so it is a closed subset of $\mathcal{F}$ (in fact, it forms a sequence converging to $\emptyset$). 
	\begin{lem}
		$\mathbb{P}_{\mathcal{F}}$ is closed in $\mathcal{K}(\mathcal{F})$. Consequently, $\mathbb{P}_{\mathcal{F}}$ is a compact subspace of $\mathcal{K}(\mathcal{F})$. 
	\end{lem}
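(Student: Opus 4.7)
The plan is to exploit the fact that $\mathcal{K}(\mathcal{F})$ is metrizable via the Hausdorff metric, so it suffices to show that $\mathbb{P}_{\mathcal{F}}$ is sequentially closed. I will take an arbitrary sequence $(\mathcal{P}_n)_n \subseteq \mathbb{P}_{\mathcal{F}}$ converging in the Vietoris topology to some closed $\mathcal{Q} \subseteq \mathcal{F}$, and verify the three clauses of the definition of a partition for $\mathcal{Q}$. Throughout I shall use the standard characterization of Vietoris convergence: $\mathcal{P}_n \to \mathcal{Q}$ iff every $F \in \mathcal{Q}$ arises as a limit $F = \lim_n F_n$ with $F_n \in \mathcal{P}_n$, and conversely every subsequential limit of a selection $F_n \in \mathcal{P}_n$ lies in $\mathcal{Q}$.

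The clause $\emptyset \in \mathcal{Q}$ is immediate from the constant selection $F_n = \emptyset \in \mathcal{P}_n$. To show $\bigcup \mathcal{Q} = \omega$, I fix $k \in \omega$ and, using that $\mathcal{P}_n$ is a partition, pick $F_n \in \mathcal{P}_n$ with $k \in F_n$. Compactness of $\mathcal{F}$ yields a subsequence $F_{n_j} \to F \in \mathcal{F}$, and since $\{A \in 2^\omega : k \in A\}$ is clopen and contains each $F_{n_j}$, it also contains $F$; so $k \in F$. The Vietoris limit property then places $F \in \mathcal{Q}$, as desired.

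For disjointness, suppose $F, G \in \mathcal{Q}$ are distinct. Choose $k \in \omega$ with (say) $k \in F \setminus G$, and pick $F_n, G_n \in \mathcal{P}_n$ with $F_n \to F$ and $G_n \to G$. For $n$ large, $F_n(k) = 1$ and $G_n(k) = 0$, so $F_n \neq G_n$ and hence $F_n \cap G_n = \emptyset$ because $\mathcal{P}_n$ is a partition. If some $m$ lay in $F \cap G$ we would have $F_n(m) = G_n(m) = 1$ eventually, contradicting disjointness of $F_n$ and $G_n$. Thus $F \cap G = \emptyset$, and $\mathcal{Q} \in \mathbb{P}_{\mathcal{F}}$.

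The consequent compactness statement is automatic: since $\mathcal{F}$ is compact, so is $\mathcal{K}(\mathcal{F})$ in the Vietoris topology, and $\mathbb{P}_{\mathcal{F}}$ being a closed subspace of a compact space is compact. The main place where one has to be careful is the covering clause, where the specific element picked from $\mathcal{P}_n$ containing $k$ need not itself converge, and one must extract a subsequential limit and then re-invoke the Vietoris limit property to land in $\mathcal{Q}$. Everything else is essentially a clean transcription of properties of the Hausdorff metric on $\mathcal{K}(\mathcal{F})$.
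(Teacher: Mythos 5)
Your proof is correct, but it takes a genuinely different route from the paper's. The paper works directly with the closure in the Vietoris topology: assuming $\mathcal{G}\in\overline{\mathbb{P}_{\mathcal{F}}}$ fails one of the partition clauses, it exhibits an explicit basic Vietoris neighbourhood of $\mathcal{G}$ (built from clopen sets of $2^\omega$ prescribing finitely many coordinates) that misses $\mathbb{P}_{\mathcal{F}}$, a contradiction. You instead pass to sequences via metrizability and invoke the Kuratowski-limit characterization of Hausdorff/Vietoris convergence (every point of the limit is a limit of a selection, and every subsequential limit of a selection lies in the limit), then verify the three clauses by pointwise convergence in $2^\omega$. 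Both arguments are sound. The paper's version is more self-contained, using nothing beyond the definition of the subbasic Vietoris sets; yours outsources the work to a standard but nontrivial fact about hyperspaces, in exchange for which the verification of each clause becomes a one-line pointwise-convergence check. You are also right to flag the covering clause as the delicate step: the selection $F_n\ni k$ need not converge, and your extraction of a convergent subsequence (using compactness of $\mathcal{F}$) followed by the upper-limit property is exactly what is needed there. If you wanted to make the argument fully self-contained you would need to justify the convergence characterization you quote, but as a proof it stands.
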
   

\begin{proof}
	Let $\mathcal{G} \in \overline{\mathbb{P}_{\mathcal{F}}}$. We need to prove that $\mathcal{G}$ is a partition, i.e.
	\begin{enumerate}[(i)]
		\item $\emptyset \in \mathcal{G}$,
		\item $\bigcup \mathcal{G} = \omega$,
		\item All elements of $\mathcal{G}$ are pairwise disjoint.
	\end{enumerate}
	Of those (i) is straightforward.

	Suppose now that there exists $n \in \omega$ such that $n \notin \bigcup\mathcal{G}$. Put $U = \lbrace x \in 2^{\omega}: x(n) = 0 \rbrace$. The set $U$ is an open subset of $2^{\omega}$. Take the basic (in Vietoris topology) set $\mathcal{K}_{U}$ = $\lbrace K \in
	\mathcal{K}(\mathcal{F}): K \subseteq U \rbrace$, being an open neighborhood of $\mathcal{G}$. Then $\mathcal{K}_{U} \cap \mathbb{P}_{\mathcal{F}} = \emptyset$. Indeed, otherwise, there would be a partition $\mathcal{P}$ such that $\mathcal{P}
	\subseteq U$, which is impossible, because there is $A \in \mathcal{P}$ such that $n \in A$. The set $\mathcal{K}_{U}$, therefore, testifies that $\mathcal{G} \notin \overline{\mathbb{P}_{\mathcal{F}}}$, which is a contradiction. It proves $(ii)$. 
	\medskip

	To prove (iii) suppose that there are  $A,B \in \mathcal{G}$ such that $A \cap B \neq \emptyset$ and $A \setminus B \ne \emptyset$. Let $n \in A \cap B$ and $m \in A \setminus B$. Consider the following open subsets in $2^{\omega}$
	\[
		U_{1} = \lbrace x \in 2^{\omega}: x(n) = 1 \wedge x(m) = 1 \rbrace \]
		\[U_{2} = \lbrace x \in 2^{\omega}: x(n) = 1 \wedge x(m) = 0 \rbrace \]
		\[ U_{3} = 2^{\omega}, \]
	and the basic set $\big < U_{1}, U_{2}, U_{3} \big>$. Then $\mathcal{G} \cap U_{1} \neq \emptyset$, because $A \in U_{1}$ and $\mathcal{G} \cap U_{2} \neq \emptyset$, since $B \in U_{2}$. So the set $\big <U_{1}, U_{2}, U_{3} \big >$ is an open
	neighborhood of $\mathcal{G}$. If $\mathbb{P}_{\mathcal{F}} \cap \big < U_{1}, U_{2}, U_{3} \big> \neq \emptyset$, then there is a partition $\mathcal{P}$ and sets $K,L \in \mathcal{P}$ such that $n,m \in K$, $n \in L$, and $m \notin L$. But it
	is impossible, since elements of $\mathcal{P}$ are pairwise disjoint. It implies that $\mathbb{P}_{\mathcal{F}} \cap \big <U_{1}, U_{2}, U_{3} \big > = \emptyset$, which is a contradiction.  
	\end{proof}

\begin{proof}[Proof of Theorem \ref{nice_condition}]
As we have mentioned it is enough to show lower semicontinuity. Fix $x \in \mathbb{R}^{\omega}$.

	Assume that $\lVert x \rVert^{\mathcal{F}} = D$ (possibly $D=\infty$). Then for every partition $\mathcal{P}$ we have  $\lVert x \rVert^{\mathcal{P}} \geq D$. For each $n \in \omega$ put $x_{n} = P_{n}(x)$. Suppose that there exists $M < D$ such that
	$\lVert x_{n} \rVert^{\mathcal{F}} < M$ for each $n$. Then for every $n$ there is a partition $\mathcal{P}_{n}$ such that $\lVert x_{n} \rVert^{\mathcal{P}_{n}} < M$. By compactness, we may assume (passing to a subsequence if needed) that
	$(\mathcal{P}_{n})_{n \in \omega}$ converges (in the Vietoris topology) to a partition $\mathcal{P}$. Since $\lVert x \rVert^\mathcal{P} \geq D$, there is $N \in \omega$ such that $\lVert x_{N} \rVert^{\mathcal{P}}\geq D$. There are only finitely many elements  $R_{1},R_{2},...,R_{j}$ of $\mathcal{P}$ having non-empty intersection with $\lbrace 1,2,...,N \rbrace$. For $k \leq j$ put 
	\[ 	U_{k} = \lbrace x \in 2^{\omega}: \forall i \in R_{k} \cap [1,\dots,N] \  x(i) = 1 \rbrace \]
	and consider the basic open set $\big < U_{1},U_{2},...,U_{j}, U_{j+1} \big>$, where $U_{j+1} = 2^{\omega}.$ Then \[ \mathcal{P} \in \big < U_{1},U_{2},...,U_{j},U_{j+1} \big>.\] Indeed, trivially $\mathcal{P} \cap U_{j+1} = \mathcal{P}$
	and for $k \leq j$ we have $ R_k \in \mathcal{P} \cap U_{k} \neq \emptyset$. Since $\mathcal{P}_{n}$ converges to $\mathcal{P}$, there is $k>N$ such that  $\mathcal{P}_{k}
	\in \big < U_{1},U_{2},...,U_{j}, U_{j+1} \big>$. It means that \[ \{P\cap \{1,\dots,N\}\colon P \in \mathcal{P}_k\}  = \{P\cap \{1,\dots,N\} \colon P \in \mathcal{P}\}.\]
	So, \[ \lVert x_k \rVert^{\mathcal{P}_k} \geq \lVert x_N \rVert^{\mathcal{P}_k} = \lVert x_N \rVert^{\mathcal{P}} > M, \]
	a contradiction.
\end{proof}

Now we can prove that $\lVert \cdot \rVert^\mathcal{F}$ is indeed a quasi-norm.

\begin{thm}\label{quasi-quasi} Let $\mathcal{F}$ be a compact hereditary family. Then for every $x,y\in \mathbb{R}^\omega$
	\begin{itemize}
		\item[(a)] if $x,y$ have disjoint supports, then $\lVert x+y \rVert^\mathcal{F} \leq \lVert x \rVert^\mathcal{F} + \lVert y \rVert^\mathcal{F}$,
		\item[(b)] $\lVert x+y \rVert^{\mathcal{F}} \leq 2 (\lVert x \rVert^{\mathcal{F}} + \lVert y \rVert^{\mathcal{F}})$, and so $\lVert \cdot \rVert^\mathcal{F}$ is a quasi-norm.
	\end{itemize}
\end{thm}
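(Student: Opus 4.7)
The plan is to prove (a) directly from the definition by splicing together near-optimal partitions of $x$ and of $y$, and then to deduce (b) from (a) via the pointwise inequality $|x+y|\le 2\max(|x|,|y|)$ together with the fact that $\max(|x|,|y|)$ admits a natural decomposition into two pieces with disjoint supports.

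For part (a), fix $\varepsilon>0$ and choose partitions $\mathcal P_x,\mathcal P_y\in\mathbb P_{\mathcal F}$ with $\|x\|^{\mathcal P_x}<\|x\|^{\mathcal F}+\varepsilon$ and $\|y\|^{\mathcal P_y}<\|y\|^{\mathcal F}+\varepsilon$. Let $A=\mathrm{supp}(x)$ and $B=\mathrm{supp}(y)$, which are disjoint. I would define
\[
\mathcal P=\{F\cap A:F\in\mathcal P_x,\,F\cap A\ne\emptyset\}\cup\{G\cap B:G\in\mathcal P_y,\,G\cap B\ne\emptyset\}\cup\{\{k\}:k\in\omega\setminus(A\cup B)\}\cup\{\emptyset\}.
\]
Since $\mathcal F$ is hereditary and covers $\omega$, every member of $\mathcal P$ lies in $\mathcal F$; by construction $\mathcal P$ is a partition of $\omega$, so $\mathcal P\in\mathbb P_{\mathcal F}$. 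Because $y$ vanishes on $A$ and $x$ on $B$, and both vanish outside $A\cup B$, the sum $\|x+y\|^{\mathcal P}$ equals $\sum_{F\in\mathcal P_x}\sup_{k\in F}|x(k)|+\sum_{G\in\mathcal P_y}\sup_{k\in G}|y(k)|=\|x\|^{\mathcal P_x}+\|y\|^{\mathcal P_y}$, and taking the infimum yields $\|x+y\|^{\mathcal F}<\|x\|^{\mathcal F}+\|y\|^{\mathcal F}+2\varepsilon$. Letting $\varepsilon\to 0$ gives (a).

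For part (b), the pointwise estimate $|x(k)+y(k)|\le|x(k)|+|y(k)|\le 2\max(|x(k)|,|y(k)|)$ together with monotonicity and positive homogeneity (already noted for $\|\cdot\|^{\mathcal F}$) gives
\[
\|x+y\|^{\mathcal F}\le 2\,\|\max(|x|,|y|)\|^{\mathcal F}.
\]
Now decompose $\max(|x|,|y|)=|x|\mathbf 1_{C}+|y|\mathbf 1_{D}$, where $C=\{k:|x(k)|\ge|y(k)|\}$ and $D=\omega\setminus C$. The two summands have disjoint supports, so part (a) applies and yields
\[
\|\max(|x|,|y|)\|^{\mathcal F}\le\||x|\mathbf 1_C\|^{\mathcal F}+\||y|\mathbf 1_D\|^{\mathcal F}\le\|x\|^{\mathcal F}+\|y\|^{\mathcal F},
\]
again by monotonicity. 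Combining these two inequalities proves the quasi-triangle inequality with modulus of concavity at most $2$.

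The main point to get right is the construction of $\mathcal P$ in (a): one has to be sure that trimming the sets of $\mathcal P_x$ and $\mathcal P_y$ does not produce repetitions or missing points, that the leftover coordinates outside $A\cup B$ can be distributed into members of $\mathcal F$ (singletons suffice because $\mathcal F$ is hereditary and covers $\omega$), and that the contributions of the trimmed cells to $\|x+y\|^{\mathcal P}$ agree with the contributions of the original cells to $\|x\|^{\mathcal P_x}$ and $\|y\|^{\mathcal P_y}$ (which uses disjointness of the supports). Once (a) is in place, (b) is essentially a two-line argument.
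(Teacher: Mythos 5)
Your proposal is correct, and part (b) is argued quite differently from the paper. The paper first reduces to finitely supported nonnegative $x,y$ (invoking the lower semicontinuity of $\lVert\cdot\rVert^{\mathcal F}$, hence compactness of $\mathcal F$), takes witnessing partitions $\mathcal P=\{P_i\}$, $\mathcal Q=\{Q_i\}$ with the cell-maxima $a_i,b_i$ arranged in non-increasing order, and intertwines them into a single partition $\mathcal R$ with $R_{2n+1}=P_n\setminus\bigcup_{i\le 2n}R_i$ and $R_{2n}=Q_n\setminus\bigcup_{i<2n}R_i$ (each $R_j$ lies in $\mathcal F$ by heredity); the resulting telescoping estimate $(a_1+b_1)+(b_1+a_2)+(a_2+b_2)+\cdots\le a_1+2\sum_{i\ge 2}a_i+2\sum_i b_i$ gives the constant $2$. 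You instead bound $|x+y|$ pointwise by $2\max(|x|,|y|)$, split $\max(|x|,|y|)=|x|\mathbf 1_C+|y|\mathbf 1_D$ into two disjointly supported pieces, and invoke (a) plus monotonicity. Your route is shorter and more elementary: it produces the same modulus of concavity $2$, it handles arbitrary $x,y\in\mathbb R^\omega$ directly via $\varepsilon$-optimal partitions rather than reducing to $c_{00}$, and it makes visible that the quasi-triangle inequality needs only heredity and covering, not compactness (compactness enters only for lower semicontinuity). What the paper's intertwining buys is an explicit single partition witnessing the bound and a marginally sharper estimate (the leading term $a_1$ appears with coefficient $1$). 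One remark: the paper dismisses (a) as ``clear,'' whereas you spell out the spliced partition; your construction is sound — heredity puts $F\cap A$ and the leftover singletons in $\mathcal F$, and disjointness of the cells guarantees $\mathcal P$ is a genuine partition — so this is a welcome elaboration rather than a deviation.
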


\begin{proof}
	Of the above (a) is clear. We will check (b).

	Let $x, y \in \mathbb{R}^\omega$. By lower semicontinuity of $\lVert \cdot \rVert^\mathcal{F}$ it is enough to consider the case when $x$ and $y$ are finitely supported. Moreover, we will assume that $x(k), y(k)\geq 0$ for every $k$ (since $\lVert
	x+y\rVert \leq \lVert |x| + |y| \rVert$ and $\lVert x \rVert = \lVert |x| \rVert$ for each $x,y\in X^\mathcal{F}$) (by $|x|$ we mean a sequence defined by $|x|(k) = |x(k)|$ for each $k \in \omega$).

	Now, let $\mathcal{P}$, $\mathcal{Q} \subseteq \mathcal{F}$ be partitions witnessing $\lVert x\rVert^\mathcal{F}$ and $\lVert y \rVert^\mathcal{F}$ respectively (here we take partitions of the supports of $x$ and $y$). Enumerate $\mathcal{P} = \{P_1, P_2, \dots, P_k\}$ and $\mathcal{Q} = \{ Q_1, Q_2,
	\dots, Q_l\}$. Let $a_i = \max\{x(j)\colon j\in P_i\}$ for $i\leq k$, and $b_i = \max\{y(j)\colon j\in Q_i\}$ for $i\leq l$.
	Re-enumerating $\mathcal{P}$ and $\mathcal{Q}$, if needed, we may assume that $(a_i)$ and $(b_i)$ are non-increasing.

	Now we will define a partition $\mathcal{R}$ of $\mathrm{supp}(x+y)$, intertwining $\mathcal{P}$ and $\mathcal{Q}$ in the following way:
	\[ R_{2n+1} = P_n \setminus \bigcup_{i\leq 2n} R_i \mbox{ for }0\leq n \leq k, \mbox{ and } R_{2n} = Q_n \setminus \bigcup_{i < 2n} R_i \mbox{ for } 1\leq n\leq l. \]
	Then $\max\{x(i)+y(i)\colon i\in R_1\} \leq a_1+b_1$, $\max\{x(i)+y(i)\colon i\in R_2\} \leq a_2+b_1$ and so on. Therefore, the partition $\mathcal{R}$ witnesses that 
	\begin{multline}
	\lVert x+ y \rVert^\mathcal{F} \leq (a_1+b_1) + (b_1 + a_2) + (a_2 + b_2) + (b_2+a_3) + \dots \leq \\ \leq a_1 + 2 (a_2 + \dots + a_k) +  2 (b_1 + \dots + b_l) \leq  2(\lVert x\rVert^\mathcal{F} + \lVert y\rVert^\mathcal{F}). 
	\end{multline}
\end{proof}

Below we prove a natural counterpart of Theorem \ref{Banach} (mimicking the proof from \cite{NaFa}).

\begin{thm}\label{quasi-Banach} If $\mathcal{F}$ is a compact hereditary family and $\mathcal{F}$ covers $\omega$, then $FIN(\lVert \cdot \rVert^\mathcal{F})$ and $EXH(\lVert \cdot \rVert^\mathcal{F})$ are quasi-Banach spaces.
\end{thm}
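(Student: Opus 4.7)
The plan is to mimic the argument of Theorem \ref{Banach}, the only change from the normed case being that the modulus of concavity $c$ from Definition \ref{quasi_norm_def} (with $c \leq 2$ by Theorem \ref{quasi-quasi}) enters every triangle estimate. The key ingredient substituting for any convexity input is the coordinate domination: for any $x \in \mathbb{R}^\omega$, any $k$, and any partition $\mathcal{P} \in \mathbb{P}_\mathcal{F}$, the block of $\mathcal{P}$ containing $k$ already contributes at least $|x(k)|$ to $\lVert x \rVert^\mathcal{P}$, so $\lVert x \rVert^\mathcal{F} \geq |x(k)|$. In particular, every Cauchy sequence in $FIN(\lVert \cdot \rVert^\mathcal{F})$ is Cauchy coordinate-wise.

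For completeness of $FIN(\lVert \cdot \rVert^\mathcal{F})$, given a Cauchy sequence $(x_n)$ I would set $x(k) := \lim_n x_n(k)$, observing that the quasi-triangle inequality forces a uniform bound $\lVert x_n \rVert^\mathcal{F} \leq M$. Since $\mathcal{F}$ contains every singleton (heredity together with the covering assumption), the singleton partition yields $\lVert y \rVert^\mathcal{F} \leq N \lVert y \rVert_\infty$ for every $y$ supported on $\{1,\dots,N\}$; hence $\lVert P_N(x_n) - P_N(x) \rVert^\mathcal{F} \to 0$ as $n \to \infty$. Combined with monotonicity this gives, for every $n$ and $N$,
\[ \lVert P_N(x) \rVert^\mathcal{F} \leq c \bigl( \lVert P_N(x_n) \rVert^\mathcal{F} + \lVert P_N(x) - P_N(x_n) \rVert^\mathcal{F} \bigr) \leq c M + c \lVert P_N(x) - P_N(x_n) \rVert^\mathcal{F}, \]
and letting $n \to \infty$ yields $\lVert P_N(x) \rVert^\mathcal{F} \leq cM$. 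Lower semicontinuity (established in Theorem \ref{nice_condition}) then produces $\lVert x \rVert^\mathcal{F} = \lim_N \lVert P_N(x) \rVert^\mathcal{F} \leq cM$, so $x \in FIN(\lVert \cdot \rVert^\mathcal{F})$. The identical reasoning, applied to the Cauchy tail $(x_n - x_m)_m$ with $n$ fixed, shows $\lVert x_n - x \rVert^\mathcal{F} \to 0$.

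For $EXH(\lVert \cdot \rVert^\mathcal{F})$ it suffices to check that $EXH$ is closed in $FIN$. Let $x_n \in EXH$ with $\lVert x - x_n \rVert^\mathcal{F} \to 0$. Given $\varepsilon > 0$, choose $n$ with $\lVert x - x_n \rVert^\mathcal{F} < \varepsilon/(2c)$ and then $k_0$ with $\lVert P_{\omega \setminus k_0}(x_n) \rVert^\mathcal{F} < \varepsilon/(2c)$. Monotonicity makes $k \mapsto \lVert P_{\omega \setminus k}(\cdot) \rVert^\mathcal{F}$ non-increasing, so for every $k \geq k_0$
\[ \lVert P_{\omega \setminus k}(x) \rVert^\mathcal{F} \leq c \bigl( \lVert P_{\omega \setminus k}(x - x_n) \rVert^\mathcal{F} + \lVert P_{\omega \setminus k}(x_n) \rVert^\mathcal{F} \bigr) < \varepsilon, \]
hence $x \in EXH$.

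The main obstacle is bookkeeping rather than conceptual: without local convexity one loses Hahn--Banach-type shortcuts, and the modulus $c$ has to be propagated through each quasi-triangle estimate. All substantive ingredients---coordinate domination, the control $\lVert y \rVert^\mathcal{F} \leq N \lVert y \rVert_\infty$ on finitely supported vectors, and the lower-semicontinuity clause of niceness---are already available from the preceding part of Section \ref{dual-norms}, so the remaining work is the straightforward template above.
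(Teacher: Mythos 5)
Your proof is correct and follows essentially the same route as the paper's: coordinatewise limits, the bound $\lVert P_N(\cdot)\rVert^{\mathcal F}\leq N\lVert\cdot\rVert_\infty$ on finitely supported vectors (the paper phrases this via disjoint-support additivity over singletons), lower semicontinuity to pass from $P_N(x)$ to $x$, and closedness of $EXH$ in $FIN$. The only cosmetic difference is that you prove $x_n\to x$ directly by applying the same limiting argument to the tails $(x_n-x_m)_m$, where the paper argues by contradiction.
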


\begin{proof} 
That $\lVert \cdot \rVert^\mathcal{F}$ is a nice quasi-norm follows directly from Theorem \ref{quasi-quasi} and from Theorem \ref{nice_condition}. 
	
We are going to show that $FIN(\lVert \cdot \rVert^\mathcal{F})$ is complete and then that $X^\mathcal{F}$ is a closed subset of $FIN(\lVert \cdot \rVert^\mathcal{F})$.

For simplicity denote $\varphi  = \lVert \cdot \rVert^\mathcal{F}$. 



First we will prove that $FIN(\varphi)$ is complete. Let $(x_n)$ be a Cauchy sequence in $FIN(\varphi)$. Applying monotonicity,  $\varphi(P_{\{k\}}(x_n-x_m))\leq \varphi(x_n-x_m)$ for every $k,n,m$, and hence $(P_{\{k\}}(x_n))_{k\in\omega}$ is a Cauchy sequence in the $k$th $1$-dimensional
coordinate space of $\mathbb{R}^\omega$ (which is a quasi-Banach space, as $\varphi$ is finite on $c_{00}$), $P_{\{k\}}(x_n)\xrightarrow{n\to\infty}y_k$ for some $y_k$. Put $y=(y_k)$. We will first show that $y\in FIN(\varphi)$. The sequence
	$\{x_n\colon n\in\omega\}$ is bounded, let
say $\varphi(x_n)\leq B$ for every $n$. We show that $\varphi(y)\leq 4B$, i.e. (by the lower semicontinuity of $\varphi$) $\varphi(P_M(y))\leq 4B$ for every $M\in\omega$. Fix an $M>0$. If $n$ is large enough, say $n\geq n_0$, then
	$\varphi(P_{\{k\}}(y-x_n))\leq \dfrac{B}{M}$ for every $k<M$ and hence \[\varphi(P_M(y))\leq 2(\varphi(P_M(y-x_n))+\varphi(P_M(x_n))) \leq 2(\sum_{k<M}\varphi(P_{\{k\}}(y-x_n))+\varphi(x_n))\leq 4B.\]
The first inequality follows from Theorem \ref{quasi-quasi}(c) and the second from Theorem \ref{quasi-quasi}(a).

Now we will prove that $x_n\to y$. If not, then there are $\varepsilon>0$ and $n_0<n_1<\dots<n_j<\dots$ such that $\varphi(x_{n_j}-y)>\varepsilon$, that is, $\varphi(P_{M_j}(x_{n_j}-y))>\varepsilon$ for some $M_j\in \omega\setminus\{0\}$ for every $j$. Pick
	$j_0$ such that  $\varphi(x_{n_{j_0}}-x_n)< \dfrac{\varepsilon}{2}$ for every $n\geq n_{j_0}$ and then pick $j_1>j_0$ such that $\varphi(P_{\{k\}}(x_{n_{j_1}}-y))\leq \dfrac{\varepsilon}{2 M_{j_0}}$ for every $k<M_{j_0}$. Then, using Theorem \ref{quasi-quasi}(a)
\[
	\varepsilon<\varphi(P_{M_{j_0}}(x_{n_{j_0}}-y))\leq \varphi(P_{M_{j_0}}(x_{n_{j_0}}-x_{n_{j_1}}))+\sum_{k<M_{j_0}}\varphi(P_{\{k\}}(x_{n_{j_1}}-y)) < \varepsilon,\]
a contradiction.

\smallskip
Now we will show that $EXH(\varphi)=\overline{c_{00}}$. The space $c_{00}$ is dense in $EXH(\varphi)$ because $\varphi(x-P_n(x))=\varphi(P_{\omega\setminus n}(x))\xrightarrow{n\to\infty}0$ for every $x\in EXH(\varphi)$. We have to show that $EXH(\varphi)$ is closed. Let
	$x\in FIN(\varphi)$ be an accumulation point of $EXH(\varphi)$. For any $\varepsilon>0$ we can find  $y\in EXH(\varphi)$ such that $\varphi(x-y)<\varepsilon$, and then $n_0$ such that $\varphi(P_{\omega\setminus n}(y))<\varepsilon$ for every
	$n\geq n_0$. If $n\geq n_0$ then $\varphi(P_{\omega\setminus n}(x))\leq 2(\varphi(P_{\omega\setminus n}(x-y))+\varphi(P_{\omega\setminus n}(y)))< 4\varepsilon$.
\end{proof}

We will be mainly interested in $EXH(\lVert \cdot \rVert^\mathcal{F})$ and so it will be convenient to denote $X^\mathcal{F} = EXH(\lVert \cdot \rVert^\mathcal{F})$. The main corollary of this section is the following reformulation of (a part of)
Theorem \ref{quasi-Banach}:

\begin{thm} If $\mathcal{F}$ is compact, hereditary and covering $\omega$, then $X^\mathcal{F}$ is a quasi-Banach space.
\end{thm}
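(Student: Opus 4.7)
The plan is to observe that this theorem is just a direct reformulation of a piece of Theorem \ref{quasi-Banach}, which has already been proved in the preceding block. Since by definition $X^\mathcal{F} = EXH(\lVert \cdot \rVert^\mathcal{F})$, and since Theorem \ref{quasi-Banach} asserts precisely that $EXH(\lVert \cdot \rVert^\mathcal{F})$ is a quasi-Banach space under exactly the hypotheses (compactness, hereditariness, covering $\omega$), there is nothing new to prove. I would simply invoke the earlier result.

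If one wanted to be fully transparent about what is being combined, the brief sketch would be: first, Theorems \ref{nice_condition} and \ref{quasi-quasi} together give that $\lVert \cdot \rVert^\mathcal{F}$ is a nice quasi-norm, with modulus of concavity at most $2$ on the subspace of finitely supported sequences and hence (by the lower semicontinuity established in Theorem \ref{nice_condition}) on all of $\mathbb{R}^\omega$; this supplies the quasi-norm axiom on $X^\mathcal{F}$. Second, $X^\mathcal{F}$ is complete because it is identified in the proof of Theorem \ref{quasi-Banach} as $\overline{c_{00}}$ inside $FIN(\lVert \cdot \rVert^\mathcal{F})$, a closed subspace of the complete quasi-normed space $FIN(\lVert \cdot \rVert^\mathcal{F})$. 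A closed subspace of a complete quasi-normed space is complete, so $X^\mathcal{F}$ is a quasi-Banach space.

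There is no real obstacle in this step; the substantive work was done in Theorems \ref{nice_condition}, \ref{quasi-quasi}, and \ref{quasi-Banach}. The only thing worth flagging is notational: the paper's convention that $X^\mathcal{F}$ denotes $EXH(\lVert \cdot \rVert^\mathcal{F})$ has just been introduced, so the statement is literally equivalent to the second half of Theorem \ref{quasi-Banach}, and the ``proof'' amounts to citing that theorem.
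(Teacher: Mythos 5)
Your proposal is correct and matches the paper exactly: the paper itself introduces this statement as ``a reformulation of (a part of) Theorem \ref{quasi-Banach}'' under the convention $X^\mathcal{F} = EXH(\lVert \cdot \rVert^\mathcal{F})$, and gives no separate proof. Your supplementary sketch of how Theorems \ref{nice_condition}, \ref{quasi-quasi}, and \ref{quasi-Banach} combine is also accurate.
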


The following is a simple consequence of (a) of Theorem \ref{quasi-quasi}.

\begin{cor} If a family $\mathcal{F}$ is a hereditary closure of a partition $\mathcal{P}$, then the formula (\ref{Upper_norm}) defines a norm. 
\end{cor}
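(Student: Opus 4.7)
The strategy is to show that when $\mathcal{F}$ is the hereditary closure of a partition $\mathcal{P}$, the quasi-norm $\|\cdot\|^\mathcal{F}$ collapses to the transparently normed expression $\|\cdot\|^\mathcal{P}$, so that the infimum over $\mathbb{P}_\mathcal{F}$ becomes redundant.

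First I would exploit the key structural consequence of the hypothesis: every element of $\mathcal{F}$ is a subset of some block of $\mathcal{P}$, and hence every partition $\mathcal{Q}\in\mathbb{P}_\mathcal{F}$ refines $\mathcal{P}$. Concretely, for each $P\in\mathcal{P}$ the sub-collection $\mathcal{Q}_P=\{Q\in\mathcal{Q}\colon Q\subseteq P\}$ is a partition of $P$. Because a maximum of non-negative reals is bounded above by their sum,
$$\sum_{Q\in\mathcal{Q}}\sup_{k\in Q}|x(k)|=\sum_{P\in\mathcal{P}}\sum_{Q\in\mathcal{Q}_P}\sup_{k\in Q}|x(k)|\geq \sum_{P\in\mathcal{P}}\sup_{k\in P}|x(k)|=\|x\|^\mathcal{P}.$$
Taking the infimum over $\mathcal{Q}\in\mathbb{P}_\mathcal{F}$ yields $\|x\|^\mathcal{F}\geq\|x\|^\mathcal{P}$. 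The reverse inequality $\|x\|^\mathcal{F}\leq\|x\|^\mathcal{P}$ is immediate from the definition, since $\mathcal{P}$ itself belongs to $\mathbb{P}_\mathcal{F}$. Hence $\|x\|^\mathcal{F}=\|x\|^\mathcal{P}$.

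The triangle inequality for $\|\cdot\|^\mathcal{P}$ is now transparent: it is a (possibly infinite) sum of $\ell_\infty$-norms taken over pairwise disjoint coordinate blocks, and summing the elementary inequality $\sup_{k\in P}|x(k)+y(k)|\leq \sup_{k\in P}|x(k)|+\sup_{k\in P}|y(k)|$ over $P\in\mathcal{P}$ gives $\|x+y\|^\mathcal{P}\leq\|x\|^\mathcal{P}+\|y\|^\mathcal{P}$. Combined with the identification above, this shows $\|\cdot\|^\mathcal{F}$ is a norm. Alternatively, as suggested by the hint, one may decompose $x=\sum_{P\in\mathcal{P}}P_P(x)$ into disjointly supported pieces and iterate Theorem \ref{quasi-quasi}(a) (using lower semicontinuity to pass to the infinite sum) to obtain $\|x\|^\mathcal{F}\leq\sum_P\|P_P(x)\|^\mathcal{F}\leq\|x\|^\mathcal{P}$, reaching the same conclusion. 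There is no real obstacle — the entire content is that requiring partition pieces to lie in $\mathcal{F}$ forces refinement of $\mathcal{P}$, which renders the infimum trivial.
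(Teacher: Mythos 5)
Your proof is correct and matches the paper's (essentially unstated) argument: the paper derives the corollary from Theorem \ref{quasi-quasi}(a) together with the identity $\lVert x\rVert^{\mathcal{F}}=\sum_{P\in\mathcal{P}}\sup_{k\in P}\lvert x(k)\rvert$, which it asserts without proof right after introducing $\lVert\cdot\rVert^{\mathcal{P}}$, and your refinement argument is precisely the justification of that identity. The remaining step --- the triangle inequality for a sum of $\ell_\infty$-norms over pairwise disjoint blocks --- is the same in both treatments.
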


As we already know, in general, the formula (\ref{Upper_norm}) does not need to define a norm, but we can consider \emph{the Banach envelope} of  $X^\mathcal{F}$.

\begin{df} \label{norma}
Let $\mathcal{F}$ be a compact, hereditary family. Let
\begin{equation} \label{equi_norm}
	\left \vvvert x \right \vvvert^\mathcal{F} = \inf \bigg \{ \sum \limits_{i=1}^{n} \lVert x_i \rVert^{\mathcal{F}} : n \in \omega, x_1,...,x_n \in X, x = \sum \limits_{i=1}^{n} x_i \bigg \}.
\end{equation}
Since $\lVert \cdot \rVert^\mathcal{F}$ is a quasi-norm, this formula defines a norm.  The space  $\widehat{X}^\mathcal{F} = EXH(\vvvert \cdot \vvvert^\mathcal{F})$ is the Banach envelope of $X^\mathcal{F}$ (see \cite{Kalton-F}).
\end{df}

\begin{rem} \label{normability} Clearly, for every compact, hereditary family $\mathcal{F}$ and for $x\in c_{00}$ we have
\[
	\left \vvvert x \right \vvvert^\mathcal{F} \leq \lVert x \rVert^{\mathcal{F}}.
\]
	If there is $C>0$ such that for each sequence $(x_i)$ of vectors in $c_{00}$
\begin{equation} \label{1-convex}
\Big \lVert \sum \limits_{i=1}^{n} x_i \Big \rVert^{\mathcal{F}} \leq C \sum \limits_{i=1}^{n} \lVert x_i \rVert^{\mathcal{F}},
\end{equation}
then 
\[
	C \left \vvvert x \right \vvvert^\mathcal{F} \geq \lVert x \rVert^{\mathcal{F}}.
\]
	Property (\ref{1-convex}) is called \emph{1-convexity} and it is equivalent to the normability of a quasi-Banach space (by $\left \vvvert \cdot \right \vvvert^\mathcal{F}$).
In the next section we will show that in general $\lVert \cdot \rVert^\mathcal{F}$ does not have to be 1-convex.

Also, we will prove that the Banach space induced by $\left \vvvert \cdot \right \vvvert^\mathcal{F}$ is  isomorphic to $X^*_\mathcal{F}$.
\end{rem}

In what follows we will be mainly interested in the families of finite subsets of $\omega$. In this case $FIN(\lVert \cdot \rVert^\mathcal{F}) = EXH(\lVert \cdot \rVert^\mathcal{F})$:

\begin{prop}\label{exh=fin} If $\mathcal{F} \subseteq [\omega]^{<\omega}$ is a compact hereditary family covering $\omega$, then \[ X^\mathcal{F} = FIN(\lVert \cdot \rVert^\mathcal{F}) \]
\end{prop}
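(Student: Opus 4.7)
The plan is to establish the two inclusions separately. The inclusion $EXH(\lVert\cdot\rVert^\mathcal{F}) \subseteq FIN(\lVert\cdot\rVert^\mathcal{F})$ is the easy direction and does not use the hypothesis that $\mathcal{F}$ consists of finite sets. Given $x\in EXH$, I would pick $n$ with $\lVert P_{\omega\setminus n}(x)\rVert^\mathcal{F}<1$; since $P_n(x)\in c_{00}$ we have $\lVert P_n(x)\rVert^\mathcal{F}<\infty$ by the non-degeneration part of Definition \ref{nicenorm} (which $\lVert\cdot\rVert^\mathcal{F}$ satisfies by Theorem \ref{nice_condition}). Part (a) of Theorem \ref{quasi-quasi} applied to the disjointly supported pair $(P_n(x), P_{\omega\setminus n}(x))$ then gives $\lVert x\rVert^\mathcal{F}\leq \lVert P_n(x)\rVert^\mathcal{F}+\lVert P_{\omega\setminus n}(x)\rVert^\mathcal{F}<\infty$.

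For the reverse inclusion $FIN\subseteq EXH$, which is where the hypothesis $\mathcal{F}\subseteq[\omega]^{<\omega}$ is genuinely used, I fix $x\in FIN(\lVert\cdot\rVert^\mathcal{F})$ and $\varepsilon>0$ and choose a partition $\mathcal{P}\in\mathbb{P}_\mathcal{F}$ witnessing $\lVert x\rVert^\mathcal{F}$ up to error $\varepsilon$, so in particular
\[ \lVert x\rVert^\mathcal{P}=\sum_{F\in\mathcal{P}}\sup_{k\in F}|x(k)|<\infty. \]
The key observation is that since every $F\in\mathcal{P}\subseteq\mathcal{F}$ is finite while $\bigcup\mathcal{P}=\omega$ is infinite, $\mathcal{P}$ is countably infinite, and the above sum is a convergent series of non-negative reals.

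Summability allows me to pick a finite subfamily $\mathcal{P}_0\subseteq\mathcal{P}$ with $\sum_{F\in\mathcal{P}\setminus\mathcal{P}_0}\sup_{k\in F}|x(k)|<\varepsilon$. Setting $N=1+\max\bigcup\mathcal{P}_0$ (finite, as $\mathcal{P}_0$ is a finite collection of finite sets), the very same partition $\mathcal{P}$ certifies the tail estimate for any $n\geq N$: for $F\in\mathcal{P}_0$ we have $F\subseteq\{1,\dots,n\}$, so $P_{\omega\setminus n}(x)$ vanishes on $F$; while for $F\in\mathcal{P}\setminus\mathcal{P}_0$ we trivially have $\sup_{k\in F}|P_{\omega\setminus n}(x)(k)|\leq \sup_{k\in F}|x(k)|$. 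Consequently
\[ \lVert P_{\omega\setminus n}(x)\rVert^\mathcal{F}\leq \lVert P_{\omega\setminus n}(x)\rVert^\mathcal{P}\leq \sum_{F\in\mathcal{P}\setminus\mathcal{P}_0}\sup_{k\in F}|x(k)|<\varepsilon, \]
so $x\in EXH(\lVert\cdot\rVert^\mathcal{F})$.

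I do not expect any genuine obstacle: the result is essentially a summability remark. The one point that deserves emphasis is exactly where the hypothesis $\mathcal{F}\subseteq[\omega]^{<\omega}$ enters — if $\mathcal{F}$ were allowed to contain infinite sets, the partition $\mathcal{P}$ could be finite and the tail-of-a-series argument would break. Compactness and hereditariness of $\mathcal{F}$ are used only indirectly, through Theorems \ref{nice_condition} and \ref{quasi-quasi} which ensure that $\lVert\cdot\rVert^\mathcal{F}$ is a nice quasi-norm in the first place.
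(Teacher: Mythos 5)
Your proof is correct and follows essentially the same route as the paper's: the easy inclusion via splitting $x=P_n(x)+P_{\omega\setminus n}(x)$, and the converse by taking a partition $\mathcal{P}$ with $\lVert x\rVert^{\mathcal{P}}<\infty$ and using the vanishing tail of the resulting convergent series (your finite subfamily $\mathcal{P}_0$ plays exactly the role of the paper's initial segment $\{G_i: i<m\}$). The only cosmetic difference is that you invoke the exact triangle inequality for disjointly supported vectors where the paper uses the quasi-triangle inequality with constant $2$; both suffice.
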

\begin{proof}
For each $x \in \mathbb{R}^{\omega}$ and for fixed $n$ we can write $x = x_{n} + x'_{n}$, where $x_{n} = P_{n}(x)$ and $x'_{n} = P_{\omega \setminus n}(x)$. Thus if $x \in X^{\mathcal{F}}$ then $\lVert x'_{n} \rVert^{\mathcal{F}} \rightarrow 0$ and 
\[ \lVert x \rVert^{\mathcal{F}} \leq 2 (\lVert x_{n} \rVert^{\mathcal{F}} + \lVert x'_{n} \rVert^{\mathcal{F}}) < \infty, \]
because $x_{n}$ is finitely supported. It shows that $X^{\mathcal{F}} \subseteq FIN(\lVert \cdot \rVert^{\mathcal{F}})$. On the other hand, if $x \in FIN(\lVert \cdot \rVert^{\mathcal{F}})$, then there is a partition $\mathcal{G} = \lbrace G_{k}: k
\in \omega \rbrace \subseteq \mathcal{F}$ such that $\mathlarger{\sum}_{k \in \omega} \sup \limits_{j \in G_{k}} \lvert x(j) \rvert < \infty$. It implies that 
\[\mathlarger{\sum}_{k \geq n} \sup \limits_{j \in G_{k}} \lvert x(j) \rvert \xrightarrow{n \rightarrow \infty} 0. \]

Let $\varepsilon>0$ and fix $m$ such that $\mathlarger{\sum}_{k\geq m} \sup \limits_{j\in G_k} |x(j)| < \varepsilon$.
Let $n > \max (\bigcup \limits_{i<m} G_i)$. Then
\[ \lVert x'_n \rVert^\mathcal{F} \leq \lVert x'_n \rVert^\mathcal{G} \leq \sum_{k\geq m} \sup_{j\in G_k} |x(j)| < \varepsilon. \]
It finishes the proof. 
\end{proof}

\section{$X^\mathcal{F}$ and the dual of $X_\mathcal{F}$}\label{how-close}

In this section we will examine how close is $X^\mathcal{F}$ to $X^*_\mathcal{F}$, the space dual to $X_\mathcal{F}$.

In case $\mathcal{F}$ is simple enough (i.e. it is generated by a partition), it is not hard to see that $X^\mathcal{F}$ is isometrically isomorphic
to $X^*_\mathcal{F}$ (Proposition \ref{for-partitions}). In general case, this is not true. However, $X^*_\mathcal{F}$ is always the Banach envelope of $X^\mathcal{F}$.

First of all, we show that for some particular families $\mathcal{F}$ spaces $X^{\mathcal{F}}$ and $X^*_{\mathcal{F}}$ are indeed the same.

\begin{prop} \label{for-partitions}
		Suppose $\mathcal{P}$ is a partition of $\omega$ (into finite sets) and $\mathcal{F}$ is its hereditary closure. Then $X_\mathcal{F}^*$ is isometrically isomorphic to $X^\mathcal{F}$.
	\end{prop}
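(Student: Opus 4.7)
The plan is to recognise the canonical pairing $\langle x,y\rangle=\sum_k x(k)y(k)$ as the duality between $X_\mathcal{F}$ and $X^\mathcal{F}$, after first simplifying both norms on $c_{00}$ using that the atoms of $\mathcal{P}$ are pairwise disjoint.

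First I would simplify the norms on $c_{00}$. Since every $F\in\mathcal{F}$ sits inside a unique $P\in\mathcal{P}$, one has
\[ \|x\|_\mathcal{F}=\sup_{P\in\mathcal{P}}\sum_{k\in P}|x(k)|, \]
and since any $\mathcal{Q}\in\mathbb{P}_\mathcal{F}$ must refine $\mathcal{P}$, and within each $P$ at least one piece $Q\subseteq P$ of $\mathcal{Q}$ contains the point realising $\sup_{k\in P}|y(k)|$, one also obtains
\[ \|y\|^\mathcal{F}=\sum_{P\in\mathcal{P}}\sup_{k\in P}|y(k)|, \]
the infimum being attained at $\mathcal{Q}=\mathcal{P}$. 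This already identifies $X_\mathcal{F}$ as the $c_0$-sum of the spaces $\ell_1(P)$ over $P\in\mathcal{P}$ and $X^\mathcal{F}$ as the corresponding $\ell_1$-sum of the spaces $\ell_\infty(P)$, so the result will reduce to the standard duality between these direct sums.

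Next I would make the duality explicit. Define $T_0\colon c_{00}\to X_\mathcal{F}^*$ by $(T_0 y)(x)=\langle x,y\rangle$. The H\"older-type bound
\[ |\langle x,y\rangle|\leq \sum_{P\in\mathcal{P}}\Big(\sum_{k\in P}|x(k)|\Big)\Big(\sup_{k\in P}|y(k)|\Big)\leq \|x\|_\mathcal{F}\cdot\|y\|^\mathcal{F} \]
yields $\|T_0 y\|_{X_\mathcal{F}^*}\leq\|y\|^\mathcal{F}$. For equality, for each $P$ meeting $\mathrm{supp}(y)$ pick a maximiser $k_P\in P$ of $|y|$ on $P$ and set $x=\sum_{P}\mathrm{sgn}(y(k_P))\,e_{k_P}$; then $\|x\|_\mathcal{F}=1$ and $(T_0 y)(x)=\|y\|^\mathcal{F}$. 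Thus $T_0$ is a linear isometry from $(c_{00},\|\cdot\|^\mathcal{F})$ into $X_\mathcal{F}^*$, which extends by density to an isometric embedding $T\colon X^\mathcal{F}\to X_\mathcal{F}^*$.

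For surjectivity I would start with $\phi\in X_\mathcal{F}^*$ and set $y(k)=\phi(e_k)$. Applying the same extremal construction restricted to any finite subfamily $\mathcal{P}_0\subseteq\mathcal{P}$ gives a vector $x$ with $\|x\|_\mathcal{F}\leq 1$ and $\phi(x)=\sum_{P\in\mathcal{P}_0}\sup_{k\in P}|y(k)|$, so $\sum_{P\in\mathcal{P}_0}\sup_{k\in P}|y(k)|\leq\|\phi\|$ uniformly in $\mathcal{P}_0$; taking the supremum yields $\|y\|^\mathcal{F}\leq\|\phi\|$, whence $y\in X^\mathcal{F}$ and $Ty=\phi$ since both agree on the dense subspace $c_{00}$. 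There is no real obstacle here: everything reduces, block by block, to the finite-dimensional duality between $\ell_1^{|P|}$ and $\ell_\infty^{|P|}$, and the only point that needs care is making sure the two descriptions of the norms on $c_{00}$ are used consistently.
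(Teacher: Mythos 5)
Your proof is correct and follows essentially the same route as the paper: both arguments rest on the observation that every set in $\mathcal{F}$ lies in a single atom of $\mathcal{P}$, so that $X_\mathcal{F}$ is the $c_0$-sum of the blocks $\ell_1^{|P|}$ and $\lVert\cdot\rVert^\mathcal{F}$ is the $\ell_1$-sum of the blockwise sup-norms, reducing everything to blockwise $\ell_1$--$\ell_\infty$ duality. The only difference is that the paper invokes the standard duality $(\bigoplus_{c_0}\ell_1^{|P|})^*\cong\bigoplus_{\ell_1}\ell_\infty^{|P|}$ as a known fact, whereas you verify it by hand via the pairing (including the surjectivity step); this is a matter of detail, not of method.
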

	\begin{proof}
Enumerate $\mathcal{P} = \{F_1, F_2, \dots\}$. It is known that for $\mathcal{F}$ being generated by partition, $X_\mathcal{F}$ is isometrically isomorphic to $\bigoplus_{c_0} \ell_{1}^{|F_{n}|}$ and so its dual space is isometrically isomorphic to $\bigoplus_{\ell_{1}}c_{0}^{|F_n|}$. \\
Let $y \in X^{\mathcal{F}}$. Then $\|y\|^{\mathcal{F}} = \sum \limits_{n \in \omega} \max \limits_{k \in F_n} |y(k)|$. Taking $y_n = P_{F_{n}}(y)$ for each $n$, we can see $y_n$ as element of $\mathbb{R}^{|F_n|}$. Thus $\|y\|^{\mathcal{F}} = \sum \limits_{n \in \omega} \|y_{n}\|_{\infty}$, which gives us the norm on $\bigoplus_{\ell_{1}}c_{0}^{|F_n|}$.

	\end{proof}

Let $\mathcal{F}$ be a compact, hereditary family covering $\omega$. Define $T\colon c_{00} \rightarrow X^*_{\mathcal{F}}$, a linear operator given by
\begin{equation} \label{identity_operator}
T(y)(x) = \sum \limits_{k \in \omega} x(k)y(k)
\end{equation}
for $x \in X_{\mathcal{F}}$. It is plain to check that $T$ is injective. Also, let $T_0 \colon c_{00} (\lVert \cdot \rVert^{\mathcal{F}}) \to X^*_{\mathcal{F}}$ and $T_1 \colon c_{00} (\vvvert \cdot \vvvert^{\mathcal{F}}) \to
	X^*_{\mathcal{F}}$ denote the operators given by the same formula as $T$. 

\begin{prop}\label{identityoperator} 

$T_0$ and $T_1$ are continuous with the norm 1.
	
\end{prop}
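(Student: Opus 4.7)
The plan is to establish the norm-one bound for $T_0$ by hand and then derive the bound for $T_1$ from it via the definition of the Banach envelope norm.

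For $T_0$, fix $y \in c_{00}$ and any $x \in X_\mathcal{F}$ with $\lVert x\rVert_\mathcal{F} \leq 1$. For an arbitrary partition $\mathcal{P} \in \mathbb{P}_\mathcal{F}$, I would split the sum defining $T_0(y)(x)$ along $\mathcal{P}$ and apply the crude bound
\[ |T_0(y)(x)| \leq \sum_{P\in\mathcal{P}} \sum_{k\in P}|x(k)||y(k)| \leq \sum_{P\in\mathcal{P}} \Bigl(\sup_{k\in P}|y(k)|\Bigr) \sum_{k\in P}|x(k)|. \]
Since each $P\in\mathcal{P}\subseteq\mathcal{F}$, the inner sum $\sum_{k\in P}|x(k)|$ is at most $\lVert x\rVert_\mathcal{F}\leq 1$, giving $|T_0(y)(x)|\leq \lVert y\rVert^\mathcal{P}$. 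Taking the infimum over $\mathcal{P}\in\mathbb{P}_\mathcal{F}$ yields $\lVert T_0(y)\rVert_{X^*_\mathcal{F}} \leq \lVert y\rVert^\mathcal{F}$. Thus $\lVert T_0\rVert \leq 1$.

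For $T_1$, note that $T_1 = T_0$ as a set-map from $c_{00}$ to $X^*_\mathcal{F}$, only the domain norm changes. Writing an arbitrary decomposition $y = \sum_{i\le n} y_i$ with $y_i\in c_{00}$ and using linearity of $T_0$ together with the triangle inequality in $X^*_\mathcal{F}$,
\[ \lVert T_1(y)\rVert_{X^*_\mathcal{F}} = \Bigl\lVert \sum_{i\leq n} T_0(y_i)\Bigr\rVert_{X^*_\mathcal{F}} \leq \sum_{i\leq n} \lVert T_0(y_i)\rVert_{X^*_\mathcal{F}} \leq \sum_{i\leq n} \lVert y_i\rVert^\mathcal{F}. \]
Taking the infimum over all such decompositions gives $\lVert T_1(y)\rVert_{X^*_\mathcal{F}} \leq \vvvert y\vvvert^\mathcal{F}$, so $\lVert T_1\rVert \leq 1$.

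For the matching lower bound, I would exhibit a unit vector realising the bound. Since $\mathcal{F}$ covers $\omega$ and is hereditary, every singleton $\{n\}$ lies in $\mathcal{F}$, so $\lVert e_n\rVert_\mathcal{F} = 1$ and the functional $T_0(e_n) = T_1(e_n) = e_n^*$ satisfies $T_0(e_n)(e_n) = 1$; hence $\lVert T_0(e_n)\rVert_{X^*_\mathcal{F}} = 1$. Combined with $\vvvert e_n\vvvert^\mathcal{F} \leq \lVert e_n\rVert^\mathcal{F} \leq 1$ (taking the trivial decomposition and the partition $\mathcal{P} = \{\emptyset\}\cup\{\{k\}\colon k\in\omega\}$), we get both $\lVert T_0\rVert \geq 1$ and $\lVert T_1\rVert\geq 1$. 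No serious obstacle is expected; the only slightly subtle point is ensuring that singletons are genuinely available in $\mathcal{F}$, which follows from the blanket assumptions on $\mathcal{F}$ stated in Section~\ref{preliminaries}.
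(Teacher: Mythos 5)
Your proof is correct and follows essentially the same route as the paper: the same partition-splitting estimate $|T_0(y)(x)|\leq \sum_{P\in\mathcal{P}}(\sup_{k\in P}|y(k)|)\sum_{k\in P}|x(k)|$ for $T_0$ (you take an infimum over all partitions where the paper uses one attaining the infimum, an immaterial difference) and the same decomposition argument for $T_1$. You additionally verify the lower bound $\lVert T_0\rVert,\lVert T_1\rVert\geq 1$ via the unit vectors $e_n$, a point the paper's proof leaves implicit; your justification that singletons lie in $\mathcal{F}$ is correct under the blanket assumptions.
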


\begin{proof}

To prove that $T_0$ is continuous, take finitely supported $y$ and let $\mathcal{P}$ be such that $\|y\|^{\mathcal{F}} = \sum \limits_{F \in \mathcal{P}} \max \limits_{k \in F} |y(k)|$. Then for every $x \in X_\mathcal{F}$ with $\|x\|_{\mathcal{F}} \leq 1$ we have
\[
\big| \sum \limits_{k \in \omega} x(k)y(k) \big| = \big| \sum \limits_{F \in \mathcal{P}} \sum \limits_{k \in F} x(k)y(k) \big| \leq \sum \limits_{F \in \mathcal{P}} \max\limits_{k \in F}|y(k)| \sum \limits_{k \in F} |x(k)| \leq \|y\|^{\mathcal{F}}.
\]
Thus 
\begin{equation} \label{T_0 inequality}
\|T(y)\|^*_\mathcal{F} \leq \|y\|^{\mathcal{F}},
\end{equation}
and so $T_0$ is continuous. \\
To show that $T_1$ is continuous we use (\ref{T_0 inequality}). Notice that for $y = \sum \limits_{i\leq n} y_i$ we have
\[ \lVert T(y) \rVert^*_\mathcal{F} \leq \sum_{i\leq n} \lVert T(y_i) \rVert^*_\mathcal{F} \leq \sum_{i\leq n} \lVert y_i \rVert^\mathcal{F}. \]
It implies that $\|T(y)\|^*_\mathcal{F} \leq \vvvert y \vvvert^\mathcal{F}$, hence $T_1$ is continuous. 
\end{proof}
Note that by above proposition, as $X_\mathcal{F}^*$ is complete, we can extend the operator $T_0$ to a continuous injective linear operator $X^{\mathcal{F}}\to X^*_{\mathcal{F}}$, denoted also by $T_0$. The same holds true for $T_1$ and $\widehat{X}^\mathcal{F}$.

Now we are ready to prove the final theorem. 

\begin{thm} \label{final_isomorphism}
	Let $\mathcal{F} \subseteq [\omega]^{< \omega}$ be a compact, hereditary family covering $\omega$. Then $\widehat{X}^{\mathcal{F}}$ is isometrically isomorphic to $X^{*}_{\mathcal{F}}$.   
\end{thm}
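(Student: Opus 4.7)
By Proposition \ref{identityoperator}, $T_1:\widehat{X}^\mathcal{F}\to X^*_\mathcal{F}$ is linear and contractive. The plan is to first verify that $T_1$ is an isometry on $c_{00}$ by a linear-programming duality argument, and then to show its image fills $X^*_\mathcal{F}$ via a density argument.

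For the isometric equality $\vvvert y\vvvert^\mathcal{F}=\lVert T_1(y)\rVert^*_\mathcal{F}$ on $y\in c_{00}$, the starting observation is that for every $F\in\mathcal{F}$ and every sign choice $\epsilon_k=\pm 1$ on $F$, the signed indicator $e_{F,\epsilon}:=\sum_{k\in F}\epsilon_k e_k$ satisfies $\lVert e_{F,\epsilon}\rVert^\mathcal{F}=1$: the partition consisting of $F$ together with singletons for the remaining coordinates is in $\mathbb{P}_\mathcal{F}$ and witnesses value $1$ in (\ref{Upper_norm}), while monotonicity gives the matching lower bound $\lVert e_{F,\epsilon}\rVert^\mathcal{F}\geq\lVert e_k\rVert^\mathcal{F}=1$. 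Any decomposition $y=\sum_i\alpha_i e_{F_i,\epsilon_i}$ with $\alpha_i\geq 0$ and $F_i\in\mathcal{F}$ therefore yields $\vvvert y\vvvert^\mathcal{F}\leq\sum_i\alpha_i$. Since $y$ is finitely supported and $\mathcal{F}$ is hereditary, the relevant constraints and decompositions live in a finite-dimensional subspace, and finite-dimensional linear-programming duality identifies
\[\inf\Bigl\{\sum_i\alpha_i:\ y=\sum_i\alpha_i e_{F_i,\epsilon_i},\ \alpha_i\geq 0\Bigr\}=\sup\Bigl\{\sum_k y(k)x(k):x\in B_{X_\mathcal{F}}\Bigr\}=\lVert T_1(y)\rVert^*_\mathcal{F}.\]
Hence $\vvvert y\vvvert^\mathcal{F}\leq\lVert T_1(y)\rVert^*_\mathcal{F}$, and combined with the reverse inequality already in Proposition \ref{identityoperator}, $T_1$ is isometric on $c_{00}$ and, by density, on $\widehat{X}^\mathcal{F}$.

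For surjectivity, $T_1(\widehat{X}^\mathcal{F})$ is a closed subspace of $X^*_\mathcal{F}$ containing all of $c_{00}$ (viewed as finitely supported sequences in $X^*_\mathcal{F}$), so it remains to show that $c_{00}$ is norm-dense in $X^*_\mathcal{F}$ — equivalently, that the unit vector basis $(e_n)$ is a shrinking Schauder basis of $X_\mathcal{F}$. This is the step I expect to be the main obstacle; for compact hereditary $\mathcal{F}\subseteq[\omega]^{<\omega}$ it can be established directly or cited from the combinatorial-Banach-space literature (e.g. \cite{Castillo-Gonzales}). A direct proof would go by contradiction: a non-shrinking basis would produce a normalized block sequence on which some fixed functional is bounded uniformly below, and a Cantor--Bendixson-type diagonalization along the supports, combined with the compactness of $\mathcal{F}$, would yield an $\ell_1$-equivalent block sequence, contradicting the fact that no such block sequence exists in a combinatorial space associated with a compact family. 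Once this density is in hand, $T_1$ is a surjective isometry, giving the desired isometric isomorphism $\widehat{X}^\mathcal{F}\cong X^*_\mathcal{F}$.
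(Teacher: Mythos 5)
Your argument is correct, and the heart of it — the inequality $\vvvert y\vvvert^\mathcal{F}\leq\lVert T_1(y)\rVert^*_\mathcal{F}$ on $c_{00}$ — goes by a genuinely different route than the paper's. You observe that every signed indicator $e_{F,\epsilon}$, $F\in\mathcal{F}$, has $\lVert\cdot\rVert^\mathcal{F}$-norm $1$, so any nonnegative decomposition of $y$ into such vectors bounds $\vvvert y\vvvert^\mathcal{F}$ by $\sum_i\alpha_i$, and then you identify the optimal value of that decomposition problem with $\lVert y\rVert^*_\mathcal{F}$ by finite-dimensional duality. This does work: restricting to the coordinates in $A=\mathrm{supp}(y)$ (heredity lets you replace each $F$ by $F\cap A$, and each $e_{F,\epsilon}$ has dual norm exactly $1$ so enlarging the class of admissible $F$'s cannot push the infimum below $\lVert y\rVert^*_\mathcal{F}$), the unit ball of $X_\mathcal{F}$ in $\mathbb{R}^A$ is the polar of the finite symmetric set $\{e_{F,\epsilon}:F\subseteq A,\ F\in\mathcal{F}\}$, which spans $\mathbb{R}^A$ because singletons lie in $\mathcal{F}$; the bipolar theorem then gives exactly your claimed equality of the gauge of the convex hull with the dual norm. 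The paper instead proves $B_{X^*_\mathcal{F}}\cap c_{00}\subseteq B_{\widehat{X}^\mathcal{F}}$ by invoking the convex series representation property of $X^*_\mathcal{F}$ together with Gowers's characterization of the extreme points of $B_{X^*_\mathcal{F}}$ as signed indicators of \emph{maximal} sets, plus Proposition \ref{extreme_to_extreme} showing these are unit vectors of $X^\mathcal{F}$. Your route is more self-contained (no CSRP, no extreme-point classification, no appeal to Proposition \ref{extreme_to_extreme}), at the cost of not recording the geometric byproduct that the two unit balls share the same extreme points. For surjectivity both proofs reduce to the same fact — that the unit vector basis of $X_\mathcal{F}$ is shrinking, so $c_{00}$ is dense in $X^*_\mathcal{F}$ — which the paper also only cites (Proposition \ref{shrinking}, via \cite{BirdLaustsen}); your sketched direct argument is vaguer than a proof but the reliance on this external fact puts you on the same footing as the paper, and a cleaner justification is simply that $X_\mathcal{F}$ contains no copy of $\ell_1$, so its unconditional basis is shrinking by James's theorem.
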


In the proof we will use some general facts about the spaces of the form $X^*_\mathcal{F}$ and their extreme points.

\begin{df}
Let $K$ be a subset of a vector space $X$. We say that $e \in K$ is an \emph{extreme point} of $K$ if there do not exist $x,y \in K$ (different than $e$) and $t \in (0,1)$ such that $e = (1-t)x + ty$. If $e$ is an extreme point of $K$ then the following condition is satisfied
\begin{equation} \label{extreme}
e+x, e-x \in K \Rightarrow x = 0
\end{equation}
The other implication is true if we additionally assume that $K$ is convex. 
The set of all extreme points of $K$ is denoted by $E(K)$. However, in the context of Banach spaces, we abuse slightly this notation - for a (quasi) Banach space $X$ $E(X)$ denotes the set of all extreme points of the unit ball $B_X$.   
\end{df}

One of the few properties of the spaces dual to combinatorial spaces which are exposed in the literature is that their balls can be well approximated by extreme points.

\begin{df} \label{CSRP}
We say that a quasi-Banach space $X$ has \emph{convex series representation property (CSRP)} if for every $x \in B_{X}$ there exists a sequence $(\lambda_{n})$ of positive real numbers with $\sum \limits_{n \in \omega} \lambda_{n} = 1$ and a sequence $(u_{n})$ of extreme points of $B_{X}$ such that
\begin{equation} \label{CSRP_eq}
x = \sum \limits_{n \in \omega} \lambda_{n} u_{n}.
\end{equation}
\end{df}

Although the notion of an extreme point is usually considered in the context of convex sets, the definition itself does not require a priori convexity. Thus we can consider extreme points in the case of non-convex sets as well. 


The combinatorial spaces and their duals were studied geometrically in the context of extreme points.
Note that if $\mathcal{F}$ is a compact, hereditary family of finite sets, then all the extreme points of the unit ball of $X^*_{\mathcal{F}}$ are finitely supported and there are only finitely many extreme points with a given support (see
\cite{Antunes},  \cite{Brech}). It is known (see \cite{Antunes}) that $X^{*}_{\mathcal{F}}$ has CSRP, for $\mathcal{F}$ as above. 

In \cite{Antunes} the authors provide proof of Gowers's theorem regarding the characterization of extreme points of the unit ball in $X^{*}_{\mathcal{F}}$. In his blog \cite{Gowers09}, Gowers states (without proof) that the set of extreme points is of the form

\begin{equation} \label{dual_extreme}
\Big \lbrace \sum \limits_{i \in F} \varepsilon_i e^{*}_{i}: F \in \mathcal{F}^{MAX}, \varepsilon_{i} \in \lbrace -1,1 \rbrace \Big \rbrace
\end{equation}

where 
\begin{itemize}
\item $e^{*}_{i}$ are functionals given by $e^{*}_{i}(e_{j}) = 1$ if $i=j$ and $e^{*}_{i}(e_{j}) = 0$ otherwise for the canonical Schauder basis $(e_{i})$, $i,j \in \omega$.  
\item $\mathcal{F}^{MAX}$ is a family of \emph{maximal} sets from $\mathcal{F}$, i.e. these sets $F$ for which $F \cup \lbrace k \rbrace \notin \mathcal{F}$ for every $k \in \omega$. 
\end{itemize}

Actually, the fact that $E(X^{*}_{\mathcal{F}})$ is given by (\ref{dual_extreme}) was proven only for Schreier space and for \emph{higher order Schreier spaces} for the definition see \cite{Alspach-Argyros}. However, that result holds also for general compact, hereditary family
$\mathcal{F}\subseteq [\omega]^{<\omega}$ (see Remark 4.4 from \cite{Antunes} and Proposition 5 from \cite{Brech}). 
We will show that $X^{\mathcal{F}}$ has basically \emph{the same} extreme points, that is $T_0(E(X^\mathcal{F}))=E(X^*_\mathcal{F})$: 

\begin{prop} \label{extreme_to_extreme}
	Assume that $\mathcal{F}\subseteq [\omega]^{<\omega}$ is a compact, hereditary family covering $\omega$.
	A vector $y\in X^\mathcal{F}$ is an extreme point of the unit ball of $X^\mathcal{F}$ if and only if it is of the form
\begin{equation} \label{seq_1}
y(i) = 
\begin{cases}
\varepsilon_{i}, & \text{if } i \in F\\
0 & \text{otherwise,}
\end{cases}
\end{equation}
	for some $F\in \mathcal{F}^{MAX}$ and $\varepsilon_i\in \{-1,1\}$.
\end{prop}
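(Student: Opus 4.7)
The plan is to prove the two implications separately, exploiting (\ref{dual_extreme}) for the sufficiency direction and reducing to ``partition balls'' $B_\mathcal{P} = \{z : \lVert z \rVert^\mathcal{P} \leq 1\}$ for the necessity. For the sufficiency, suppose $y$ has the form (\ref{seq_1}) with $F \in \mathcal{F}^{MAX}$. First, $y \in B_{X^\mathcal{F}}$: the partition $\{F\} \cup \{\{k\} : k \notin F\} \cup \{\emptyset\}$ lies entirely in $\mathcal{F}$ (singletons are in $\mathcal{F}$ by heredity and covering, while $F \in \mathcal{F}^{MAX} \subseteq \mathcal{F}$) and witnesses $\lVert y \rVert^\mathcal{F} \leq 1$. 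Second, by (\ref{dual_extreme}) the image $T_0(y) = \sum_{i \in F} \varepsilon_i e_i^*$ is an extreme point of $B_{X^*_\mathcal{F}}$, and since $T_0 \colon X^\mathcal{F} \to X^*_\mathcal{F}$ is a bounded injection of norm $1$ (Proposition \ref{identityoperator}), any decomposition $y = (1-t)u + tv$ with $u, v \in B_{X^\mathcal{F}}$ and $t \in (0,1)$ would push forward to a convex combination of $T_0(y)$ in $B_{X^*_\mathcal{F}}$; extremality of $T_0(y)$ combined with injectivity of $T_0$ then forces $u = v = y$.

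For the necessity, let $y$ be extreme in $B_{X^\mathcal{F}}$. A standard argument (perturbing by a small vector, controlling the error via the modulus of concavity) gives $\lVert y \rVert^\mathcal{F} = 1$. The crucial step is to attain this infimum, namely to find $\mathcal{P}^* \in \mathbb{P}_\mathcal{F}$ with $\lVert y \rVert^{\mathcal{P}^*} = 1$. I would derive this from the lower semicontinuity of $\mathcal{P} \mapsto \lVert y \rVert^\mathcal{P}$ on the compact space $\mathbb{P}_\mathcal{F}$: given a Vietoris limit $\mathcal{P}_n \to \mathcal{P}$ and any finite subfamily $\{P_1, \dots, P_k\} \subseteq \mathcal{P}$, pick $M$ with $P_i \subseteq [1, M]$ for all $i \leq k$; Vietoris convergence then yields, for large $n$, distinct blocks $P_i^n \in \mathcal{P}_n$ with $P_i^n \cap [1, M] = P_i$ (so in particular $P_i \subseteq P_i^n$), whence $\lVert y \rVert^{\mathcal{P}_n} \geq \sum_{i \leq k} \max_{j \in P_i} |y(j)|$; passing to $\liminf_n$ and then $k \to \infty$ gives $\liminf_n \lVert y \rVert^{\mathcal{P}_n} \geq \lVert y \rVert^\mathcal{P}$.

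With $\mathcal{P}^*$ in hand, $B_{\mathcal{P}^*} \subseteq B_{X^\mathcal{F}}$ contains $y$, so $y$ is extreme in $B_{\mathcal{P}^*}$; and the space $(c_{00}, \lVert \cdot \rVert^{\mathcal{P}^*})$ identifies isometrically with the $\ell_1$-direct sum of the finite-dimensional $\ell_\infty$ spaces indexed by the blocks of $\mathcal{P}^*$ (as in the proof of Proposition \ref{for-partitions}), whose extreme points are well-known to be the $\pm 1$-valued vectors supported on a single block. Hence $y = \sum_{i \in F} \varepsilon_i e_i$ for some $F \in \mathcal{P}^* \subseteq \mathcal{F}$ and $\varepsilon_i \in \{\pm 1\}$. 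To upgrade $F$ to $\mathcal{F}^{MAX}$, suppose instead that $F \cup \{j\} \in \mathcal{F}$ for some $j \notin F$; take $Q \in \mathcal{P}^*$ containing $j$ and replace $F$, $Q$ in $\mathcal{P}^*$ by $F \cup \{j\}$, $Q \setminus \{j\}$ (both in $\mathcal{F}$ by heredity) to form a partition $\mathcal{P}' \in \mathbb{P}_\mathcal{F}$; then $\lVert y \pm \delta e_j \rVert^{\mathcal{P}'} = 1$ for every $\delta \in (0, 1]$, exhibiting $y$ as the midpoint of distinct points in $B_{X^\mathcal{F}}$, a contradiction. The principal technical hurdle is the lower semicontinuity argument ensuring the infimum is attained; once that is in place, the rest reduces to the characterization (\ref{dual_extreme}), standard facts about extreme points of $\ell_1$-direct sums, and a one-step perturbation to force maximality.
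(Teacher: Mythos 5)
Your proposal is correct, but it takes a genuinely different route from the paper in the sufficiency direction and a partly different one in the necessity direction. For sufficiency the paper argues directly and elementarily inside $X^\mathcal{F}$: if $y=(1-t)x+tz$ with $x,z\in B_{X^\mathcal{F}}$, then $|x|=|z|=1$ on $F$ because $1=|y(k)|\le (1-t)|x(k)|+t|z(k)|$, and any nonzero coordinate of $x$ outside $F$ forces $\lVert x\rVert^\mathcal{F}>1$ by maximality of $F$; you instead push the decomposition through the norm-one injection $T_0$ and invoke the Gowers--Antunes--Brech description (\ref{dual_extreme}) of $E(X^*_\mathcal{F})$. That is legitimate (the paper cites (\ref{dual_extreme}) as known for general compact hereditary families, so there is no circularity), but it outsources to the dual side exactly the kind of statement the proposition is meant to establish intrinsically for $X^\mathcal{F}$; the paper's version is self-contained and no longer. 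For necessity both arguments hinge on an optimal partition $\mathcal{P}^*$ with $\lVert y\rVert^{\mathcal{P}^*}=\lVert y\rVert^\mathcal{F}$: the paper simply asserts its existence, whereas you prove it via lower semicontinuity of $\mathcal{P}\mapsto\lVert y\rVert^{\mathcal{P}}$ on the compact space $\mathbb{P}_\mathcal{F}$ --- your Vietoris argument is sound and in fact fills a point the paper glosses over, since extreme points are not a priori finitely supported. From there the paper perturbs by hand (constant modulus on each block of $\mathcal{P}^*$, then a two-block perturbation when $\mathrm{supp}(y)\notin\mathcal{F}$), while you quote the known extreme-point structure of $\ell_1$-sums of finite-dimensional $\ell_\infty$'s via the identification of Proposition \ref{for-partitions}; both work, and the maximality step is essentially identical. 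One small caveat: your justification that extremality forces $\lVert y\rVert^\mathcal{F}=1$ (``perturb by a small vector, control the error via the modulus of concavity'') is shakier than needed, since with concavity constant $2$ such a perturbation need not stay in the unit ball when $\lVert y\rVert^\mathcal{F}$ is close to $1$; the clean argument writes $y=\tfrac12(1+\epsilon)y+\tfrac12(1-\epsilon)y$ when $0<\lVert y\rVert^\mathcal{F}<1$, and $0=\tfrac12 e_1+\tfrac12(-e_1)$, using only homogeneity.
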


\begin{proof}



First, assume that $y$ equals  1, up to an absolute value, on some maximal set $F \in \mathcal{F}$. 

Suppose $y$ is not extreme and $y = (1-t)x + tz$ for some $0 < t <1$ and $x,z \in B_{X^{\mathcal{F}}}$. In particular, absolute values of $x$ and $z$ do not exceed $1$. Suppose that e.g. $|x(k)| < 1$ for some $k \in F$. Then $1 = |y(k)| \leq (1-t)|x(k)| + t|z(k)| < 1$, which is a contradiction. So $|x(k)| = |z(k)| = 1$ for each $k \in F$. On the other hand, if $x(k) \neq 0$ for $k \notin F$, then by maximality of $F$ it follows that $\lVert x \rVert^{\mathcal{F}} > 1$. It implies that $x(k) = 0$ for every $k \notin F$. Hence $x = y$. This is a contradiction and so $y$ must be an extreme point.  

Now suppose that $y \in E(X^{\mathcal{F}})$. Then $\lVert y \rVert^{\mathcal{F}} = 1$. 
	Let $\mathcal{P} \in \mathbb{P}_{\mathcal{F}}$ for which $\lVert y \rVert^{\mathcal{F}} = \lVert y \rVert^{\mathcal{P}}$. Notice that for every $P\in \mathcal{P}$ we have $|y(i)| = |y(j)|$ for every $i,j\in P$. Suppose otherwise. Then there is
	$P\in \mathcal{P}$ and $i,j \in P$ such that $|y(j)|<|y(i)|$ and so for $\eta< |y(i)|-|y(j)|$ we would have $\lVert y \pm \eta e_j \rVert^\mathcal{F} \leq \lVert y \pm \eta e_j \rVert^\mathcal{P} \leq 1$, hence $y$ would not be an extreme point.

	It follows, that if $\mathrm{supp}(y)\in \mathcal{F}$, then $y$ needs to be of the promised form (in particular $\mathrm{supp}(y)$ is a maximal set in $\mathcal{F}$, otherwise $\lVert y\pm e_i\lVert=1$ for $i\not\in\mathrm{supp}(y)$ with $\mathrm{supp}(y)\cup\{i\}\in\mathcal{F}$). If $\mathrm{supp}(y)\notin \mathcal{F}$, then we may find distinct $P_0, P_1\in \mathcal{P}$ and $a_0, a_1 \ne 0$ such that $y(i)=a_j$ for $i\in P_j$, $j\in \{0,1\}$. Since
	$\lVert y \rVert^\mathcal{F} = 1$, $|a_0|, |a_1|<1$. But then for sufficiently small $\eta>0$ and for $u\in B_{X^\mathcal{F}}$ defined by 
\begin{equation} 
u(i) = 
\begin{cases}
\eta, & \text{if } i \in P_0\\
	- \eta, & \text{if } i \in P_1\\
0 & \text{otherwise,}
\end{cases}
\end{equation}
we would have \[ \lVert y \pm u \rVert^\mathcal{F} \leq \lVert y \pm u \rVert^\mathcal{P} = \lVert y\rVert^\mathcal{P} = 1. \] 

So, $y$ has to be of the form as in the proposition.
\end{proof}

We shall need also the following property of the basis of the space $X_\mathcal{F}$ which can be shown by repeating directly the proof of Proposition 3.10 of \cite{BirdLaustsen}. 

\begin{prop}\label{shrinking} Let $\mathcal{F} \subseteq [\omega]^{< \omega}$ be a compact, hereditary family covering $\omega$. Then the unit vector basis of $X_\mathcal{F}$ is shrinking, thus the biorthogonal basic sequence $(e_i^*)_{i=1}^\infty\subset X_\mathcal{F}^*$ is a basis for $X_\mathcal{F}^*$. 
\end{prop}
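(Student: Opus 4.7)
The plan is first to confirm that $(e_i)_{i=1}^\infty$ is a $1$-unconditional Schauder basis of $X_\mathcal{F}$, and then to deduce that it is shrinking via the classical theorem of James. From the formula (\ref{Lower_norm}) the norm $\|\cdot\|_\mathcal{F}$ is monotone with respect to coordinate-wise absolute values, so each coordinate projection is a contraction; combined with $X_\mathcal{F}=EXH(\|\cdot\|_\mathcal{F})$ (which gives $P_n x\to x$ in norm for every $x\in X_\mathcal{F}$), this makes $(e_i)$ a $1$-unconditional Schauder basis. James' theorem then asserts that an unconditional basis is shrinking if and only if the host space contains no isomorphic copy of $\ell_1$, so the task reduces to showing that $\ell_1$ does not embed in $X_\mathcal{F}$. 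Once the shrinking property is established, the assertion that $(e_i^*)$ is a basis of $X_\mathcal{F}^*$ is a standard textbook fact.

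Suppose, toward a contradiction, that $\ell_1$ embeds in $X_\mathcal{F}$. By $1$-unconditionality and the Bessaga--Pe\l czy\'nski selection principle, we may assume the embedding is realised by a normalized block basic sequence $(y_k)$ in $X_\mathcal{F}$ with $\|\sum_k a_k y_k\|_\mathcal{F}\geq c\sum_k|a_k|$ for some $c>0$ and all $(a_k)\in c_{00}$. Setting $z_n=(1/n)\sum_{k=1}^n y_k$ gives $\|z_n\|_\mathcal{F}\geq c$ for all $n$. On the other hand, using the disjointness of the supports together with hereditariness of $\mathcal{F}$ to bound $\sum_{i\in F\cap\mathrm{supp}(y_k)}|y_k(i)|\leq \|y_k\|_\mathcal{F}=1$ for every $F\in\mathcal{F}$ and $k$, one obtains
\[
\|z_n\|_\mathcal{F}\leq \frac{1}{n}\sup_{F\in\mathcal{F}}\bigl|\{k\leq n : F\cap\mathrm{supp}(y_k)\neq\emptyset\}\bigr|,
\]
so it suffices to refine $(y_k)$ so that the right-hand side tends to $0$, producing the contradiction.

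The refinement exploits the compactness of $\mathcal{F}$. For each $k$, pick $F_k\in\mathcal{F}$ with $F_k\subseteq\mathrm{supp}(y_k)$ and $\sum_{i\in F_k}|y_k(i)|\geq 1/2$ (possible by the norm formula and hereditariness). Since $\min F_k\to\infty$, a subsequence of $(F_k)$ converges in $2^\omega$ to $\emptyset$, by compactness of $\mathcal{F}$. The combinatorial consequence — that, after passing to a sufficiently rapid sub-subsequence of $(y_k)$, for every $F\in\mathcal{F}$ the set $J_F=\{k : F\cap\mathrm{supp}(y_k)\neq\emptyset\}$ satisfies a uniform bound $|J_F\cap[1,n]|=o(n)$ — is exactly the content of the proof of Proposition 3.10 in \cite{BirdLaustsen} for the Schreier family, and the argument goes through using only the hereditariness and compactness of $\mathcal{F}$, so it adapts to the present setting without change.

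The main obstacle is this last combinatorial step: converting the convergence of the witnesses $F_k$ to $\emptyset$ into a uniform $o(n)$ bound on $|J_F\cap[1,n]|$ over $F\in\mathcal{F}$. One proceeds by choosing a sub-subsequence of $(y_k)$ whose supports grow rapidly enough that any $F\in\mathcal{F}$ can hit only a number of supports controlled by $|F|$ — hence, by the local structure of $\mathcal{F}$ near $\emptyset$, strictly slower than $n$. Making this quantitative is the technical heart of the Bird--Laustsen argument, and it is precisely the route the authors of the paper indicate with the phrase ``repeating directly the proof of Proposition 3.10 of \cite{BirdLaustsen}''.
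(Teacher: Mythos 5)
Your reduction is reasonable up to a point: the unit vector basis is indeed $1$-unconditional, and by James' theorem it suffices to show that $\ell_1$ does not embed into $X_\mathcal{F}$. The argument you give for that, however, collapses at its central combinatorial claim. You assert that, after passing to a subsequence of the normalized block sequence $(y_k)$, one has $\sup_{F\in\mathcal{F}}\lvert\{k\leq n: F\cap\mathrm{supp}(y_k)\neq\emptyset\}\rvert=o(n)$. This is false already for the Schreier family $\mathcal{S}$, and for \emph{every} block sequence: since the supports are successive, $\min\mathrm{supp}(y_k)\geq k$, so choosing one point $m_k\in\mathrm{supp}(y_k)$ for each $k$ with $n/2<k\leq n$ produces a set $F=\{m_k:\ n/2<k\leq n\}$ with $\lvert F\rvert\leq n/2<\min F$; hence $F\in\mathcal{S}$ and $F$ meets roughly $n/2$ of the supports. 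Consequently $\lVert z_n\rVert_{\mathcal{S}}$ need not tend to $0$ --- the unit vector basis of the Schreier space is weakly null while the Ces\`aro means of each of its subsequences stay bounded away from zero in norm, which is precisely the failure of the weak Banach--Saks property that Schreier built the space to exhibit. So no ``sufficiently rapid sub-subsequence'' exists, the contradiction you aim for cannot be reached along this route, and this is not what the proof of Proposition 3.10 of \cite{BirdLaustsen} does.

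The repair requires a genuinely different estimate. For $F\in\mathcal{F}$ and every $k$ beyond the \emph{first} index whose support $F$ meets, one should bound $\sum_{i\in F\cap\mathrm{supp}(y_k)}\lvert y_k(i)\rvert$ by $\lvert F\rvert\cdot\lVert y_k\rVert_\infty$ rather than by $\lVert y_k\rVert_{\mathcal{F}}=1$, with $\lvert F\rvert$ controlled by data attached to that first block (for $\mathcal{S}$, by $\max\mathrm{supp}$ of the first block met). Since a long convex average of disjointly supported normalized vectors has small $\lVert\cdot\rVert_\infty$, one first replaces the $y_k$ by successive long averages $u_j$ with $\lVert u_j\rVert_\infty$ decaying fast relative to $\max\mathrm{supp}(u_{j-1})$, and only then averages the $u_j$; this does yield $\lVert \tfrac1m\sum_{j\leq m}u_j\rVert_{\mathcal{F}}\to 0$, while a functional witnessing non-shrinking would keep these averages bounded below, giving the contradiction. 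Alternatively, you can avoid block sequences altogether: the map $x\mapsto(\sigma\mapsto\sum_i\sigma(i)x(i))$ embeds $X_\mathcal{F}$ isometrically into $C(K)$ for the countable compact metric space $K=\{\sigma\in\{-1,0,1\}^\omega:\mathrm{supp}(\sigma)\in\mathcal{F}\}$ (compactness of $K$ uses exactly that $\mathcal{F}$ is compact and hereditary), and $C(K)$ for countable compact $K$ is $c_0$-saturated, hence contains no copy of $\ell_1$; James' theorem then finishes the proof.
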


\begin{proof}[Proof of Theorem \ref{final_isomorphism}]
We shall use the natural identification of $X_\mathcal{F}^*$  with a subspace of $\mathbb{R}^\omega$ by the map $X_\mathcal{F}^*\ni f\mapsto (f(e_n))_{n=1}^\infty\in\mathbb{R}^\omega$. In this setting the extended mapping $T_1$ of Proposition \ref{identityoperator} (see the remark after Proposition \ref{identityoperator}) becomes the formal inclusion $\widehat{X}^\mathcal{F}\hookrightarrow X_\mathcal{F}^*$, and, by Proposition \ref{shrinking}, $X^*_\mathcal{F}$ is the completion of $(c_{00},\|\cdot\|^*_\mathcal{F})$. 

By Proposition \ref{identityoperator} we have $\lVert \cdot \rVert^{*}_\mathcal{F} \leq \vvvert \cdot \vvvert^\mathcal{F}$  on $\widehat{X}^\mathcal{F}$.  We will prove that $\vvvert\cdot\vvvert^\mathcal{F}\leq \lVert\cdot\rVert^*_\mathcal{F}$ on $c_{00}$, which implies equality of $\vvvert\cdot\vvvert^\mathcal{F}$ and $\|\cdot\|^*_\mathcal{F}$ on $c_{00}$. Then Proposition \ref{shrinking} and the definition of $\widehat{X}^\mathcal{F}$ yield $\widehat{X}^\mathcal{F}=X_\mathcal{F}^*$ and equality of $\vvvert\cdot\vvvert^\mathcal{F}$ and $\lVert\cdot\rVert^*_\mathcal{F}$ on 	$\widehat{X}^\mathcal{F}=X_\mathcal{F}^*$.  
	 
	
We will prove that $\vvvert \cdot \vvvert^\mathcal{F} \leq \| \cdot \|^{*}_\mathcal{F}$ on $c_{00}$ by showing that $B_{X^*_\mathcal{F}}\cap c_{00} \subseteq B_{\widehat{X}^\mathcal{F}}$.
	
Fix finitely supported $x \in B_{X^*_\mathcal{F}}$ and let $A = \mathrm{supp}(x)$. Since $X^*_{\mathcal{F}}$ has CSRP, we have $x = \sum \limits_{k \in \omega} \lambda_k u_k$ for $u_k \in E(X^*_{\mathcal{F}})$ and $\lambda_k$ such that $\sum \limits_{k \in \omega} \lambda_k = 1$. 

By continuity of $P_A$ we have   $x = \sum \limits_{k \in \omega} \lambda_k P_A(u_k)$. By the form of extreme points (see \eqref{dual_extreme}) the set $\{P_{A}(u_k)\colon k\in \omega\}$ is finite and so we may enumerate it as $\{v_i\colon i \leq n\}$ for some $n \in \omega$. Also, there are $\alpha_i > 0$, $i\leq n$, such that $\sum \limits_{i=1}^{n}\alpha_i = 1$ and $x = \sum \limits_{i=1}^{n}\alpha_i v_i$. It means that $x \in \mathrm{conv}(P_A[E(B_{X^*_\mathcal{F}})])$, where $\mathrm{conv}(K)$ denotes the convex hull of a set $K$. Since each $u_i$ is an extreme point, we have that $\varepsilon_{0}P_A(u_i) + \varepsilon_{1}P_{\omega \setminus A}(u_i)$ is an extreme point for $\varepsilon_{0}, \varepsilon_{1} \in \{-1,1\}$. In particular, as $v_i = \dfrac{1}{2} \Big( u_i + (P_A(u_i) - P_{\omega \setminus A}(u_i)\Big)$, we have $x \in \mathrm{conv}(E(X^*_\mathcal{F}))$ and thus $B_{X^*_\mathcal{F}}\cap c_{00} = \mathrm{conv}(E(X^*_\mathcal{F}))$.

On the other hand, by Proposition \ref{extreme_to_extreme} we know that $E(X^*_\mathcal{F}) \subseteq B_{X^\mathcal{F}} \subseteq B_{\widehat{X}^\mathcal{F}}$ and, since $B_{\widehat{X}^\mathcal{F}}$ is convex, we obtain that $B_{X^*_\mathcal{F}} \cap c_{00}\subseteq B_{\widehat{X}^\mathcal{F}}$. 
\end{proof}

Now we will show a result which indicates that the connection between $X^\mathcal{F}$ and $X^*_\mathcal{F}$ is quite strong. For each compact family $\mathcal{F}$ the space $X^\mathcal{F}$ is a (quasi-Banach) pre-dual of
	$(X_{\mathcal{F}})^{**}$. In other words $X^\mathcal{F}$ and $X^*_\mathcal{F}$ have the isometrically isomorphic dual spaces. In fact, this is a direct corollary of Theorem \ref{final_isomorphism} and \cite[Chapter 2.4]{Kalton-F}. We enclose a detailed proof.

\begin{thm} \label{isomorphism}
		If $\mathcal{F}\subseteq [\omega]^{<\omega}$ is a compact hereditary family covering $\omega$, then $(X^{\mathcal{F}})^{*}$ is isometrically isomorphic to $(X_{\mathcal{F}})^{**}$.
	\end{thm}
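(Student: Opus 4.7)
The plan is to factor the desired isometric isomorphism through the Banach envelope and invoke Theorem \ref{final_isomorphism}. First I would show that restriction of functionals yields an isometric isomorphism $(\widehat{X}^\mathcal{F})^{*}\to(X^\mathcal{F})^{*}$, and then apply Theorem \ref{final_isomorphism}, which identifies $\widehat{X}^\mathcal{F}$ isometrically with $X_\mathcal{F}^{*}$, so that $(\widehat{X}^\mathcal{F})^{*}\cong (X_\mathcal{F}^{*})^{*}=X_\mathcal{F}^{**}$.

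The main step is the first identification. Since $c_{00}$ is dense in both $X^\mathcal{F}$ (in the quasi-norm $\lVert\cdot\rVert^\mathcal{F}$) and $\widehat{X}^\mathcal{F}$ (in the envelope norm $\vvvert\cdot\vvvert^\mathcal{F}$), a continuous functional on either space is determined by its restriction to $c_{00}$, and any such restriction extends back by density and completeness. The inequality $\vvvert\cdot\vvvert^\mathcal{F}\leq\lVert\cdot\rVert^\mathcal{F}$ on $c_{00}$ (Remark \ref{normability}) immediately gives that every $f\in(\widehat{X}^\mathcal{F})^{*}$ defines a continuous linear functional on $X^\mathcal{F}$ of at most the same norm. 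Conversely, given $f\in(X^\mathcal{F})^{*}$ with $|f(x)|\leq C\lVert x\rVert^\mathcal{F}$ for all $x\in c_{00}$, linearity gives for every decomposition $x=\sum_{i\leq n}x_{i}$ in $c_{00}$ the bound $|f(x)|\leq\sum_{i\leq n}|f(x_{i})|\leq C\sum_{i\leq n}\lVert x_{i}\rVert^\mathcal{F}$; passing to the infimum over decompositions yields $|f(x)|\leq C\vvvert x\vvvert^\mathcal{F}$, so $f$ extends by density to a functional on $\widehat{X}^\mathcal{F}$ of the same norm. Hence restriction is an isometric isomorphism.

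Combining with Theorem \ref{final_isomorphism}, which furnishes an isometric identification $\widehat{X}^\mathcal{F}\cong X_\mathcal{F}^{*}$, the dual side gives $(X^\mathcal{F})^{*}\cong(\widehat{X}^\mathcal{F})^{*}\cong X_\mathcal{F}^{**}$. I do not expect serious obstacles here: the only point requiring care is the clean set-up of the canonical map $X^\mathcal{F}\to\widehat{X}^\mathcal{F}$ induced by $\vvvert\cdot\vvvert^\mathcal{F}\leq\lVert\cdot\rVert^\mathcal{F}$, together with the verification that dualization behaves as expected, but this is formal once the density of $c_{00}$ in both spaces is invoked.
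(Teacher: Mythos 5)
Your proposal is correct, but it takes a different route from the paper's own proof. You factor everything through the Banach envelope: you first establish the standard fact that a quasi-normed space and its Banach envelope have isometrically the same dual (via the two inequalities $\vvvert\cdot\vvvert^\mathcal{F}\leq\lVert\cdot\rVert^\mathcal{F}$ and, for $f\in(X^\mathcal{F})^*$, $|f(x)|\leq\sum_i|f(x_i)|\leq C\sum_i\lVert x_i\rVert^\mathcal{F}$ over decompositions), and then dualize the identification $\widehat{X}^\mathcal{F}\cong X_\mathcal{F}^*$ of Theorem \ref{final_isomorphism}. This is precisely the shortcut the authors acknowledge in the sentence preceding the theorem (``this is a direct corollary of Theorem \ref{final_isomorphism} and [Kalton--F, Ch.\ 2.4]'') but deliberately do not carry out. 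The paper instead gives a self-contained direct computation: it identifies $(X_\mathcal{F})^{**}$ with $FIN(\lVert\cdot\rVert_\mathcal{F})$ via Theorem \ref{5.5}, realizes $(X^\mathcal{F})^*$ inside $\mathbb{R}^\omega$ by $f\mapsto(f(e_n))$, and proves by exhibiting explicit test vectors (a signed indicator of a set $F_0\in\mathcal{F}$ for the lower bound, and a vector built from an optimal partition for the upper bound) that the functional norm on $(X^\mathcal{F})^*$ coincides with $\lVert\cdot\rVert_\mathcal{F}$. What your route buys is brevity and conceptual clarity, at the cost of inheriting the full machinery behind Theorem \ref{final_isomorphism} (CSRP and the classification of extreme points); what the paper's route buys is logical independence from that theorem and, more importantly, an explicit coordinate description of $(X^\mathcal{F})^*$ as $FIN(\lVert\cdot\rVert_\mathcal{F})$ with the norm $\lVert\cdot\rVert_\mathcal{F}$, which is exactly the form in which the result is used later (e.g.\ in the proof of Proposition \ref{Schur}). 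If you wanted that concrete description from your argument, you would still need to append the appeal to Theorem \ref{5.5}.
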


\begin{proof}
By Theorem \ref{5.5}  (and the fact that $X_\mathcal{F}$ does not contain an isomorphic copy of $\ell_1$ if $\mathcal{F}$ is as above) the space $(X_{\mathcal{F}})^{**}$ is isometrically isomorphic to $FIN(\lVert \cdot \rVert_{\mathcal{F}})$. As in the proof of Proposition \ref{shrinking} we use the natural identification of $(X^\mathcal{F})^*$  with a subspace of $\mathbb{R}^\omega$ via the map $(X^\mathcal{F})^*\ni f\mapsto (f(e_n))_{n=1}^\infty\in\mathbb{R}^\omega$.
We need to prove that $\lVert y \rVert_{*}^{\mathcal{F}} = \lVert y \rVert_{\mathcal{F}}$ for any $y\in\mathbb R^\omega$, where $\lVert \cdot \rVert_{*}^{\mathcal{F}}$ denotes the functional norm on $X^{\mathcal{F}}$. 

Take any $y \in \mathbb R^\omega$. 
For any set $F_{0} \in \mathcal{F}$ 
consider $x_{0} \in X^\mathcal{F}$ given by
	\[
	x_{0}(n) = 
	\begin{cases}
		sgn(y(n)), & \text{if } n \in F_{0},\\
		0, & \text{otherwise.}
	\end{cases}
	\]
This is a vector of norm at most 1 in $X^{\mathcal{F}}$ and thus 
	\[
	\lVert y \rVert_{*}^{\mathcal{F}} \geq \lvert \sum_{n \in F_{0}} x_{0}(n)y(n) \rvert = \sum_{n \in F_{0}} \lvert y(n) \rvert 
	\]
 As $F_0\in\mathcal{F}$ was arbitrary, we obtain $\lVert y\rVert^\mathcal{F}_*\geq \lVert y\rVert_\mathcal{F}$. 	To prove the second inequality, take any $x \in c_{00}$ such that $\lVert x \rVert^\mathcal{F} = 1$. There exists partition $\mathcal{P} = \lbrace F_{1}, F_{2}, ..., F_{j} \rbrace$ of the support of $x$ for which the infimum in the definition of the quasi-norm is obtained, namely 
	\[
	\lVert x \rVert^{\mathcal{F}} = \sum_{i=1}^{j} \sup \limits_{k \in F_{i}} \lvert x(k) \rvert.
	\]

	Let $x'$ be defined by $x'(j)=a_i \cdot sgn(y(j))$ if $j\in F_i$,
	where $a_{i} = \sup \limits_{k \in F_{i}} \lvert x(k) \rvert$ (if $j\notin \bigcup_i F_i$, then let $x'(j)=0$). Then $\lVert x' \rVert^\mathcal{F} = \lVert x \rVert^\mathcal{F} = 1$ and

	\[
	\big \lvert \sum_{n \in \omega}x(n)y(n) \big \rvert \leq \sum_{n \in \omega} \lvert x(n)y(n) \rvert \leq  \sum_{n \in \omega} \lvert x'(n)y(n) \rvert.
	\]
Moreover
\[
	\sum_{n \in \omega} \lvert x'(n)y(n) \rvert 
= \sum_{i=1}^{j} \sum_{n \in F_{i}} \lvert x'(n)y(n) \rvert = \sum_{i=1}^{j} a_{i} \sum_{n \in F_{i}} \lvert y(n) \rvert \leq \sum_{i=1}^{j} a_{i} \lVert y \rVert_{\mathcal{F}} = \lVert y \rVert_{\mathcal{F}}
	\]
	which, as $c_{00}$ is dense in $X^\mathcal{F}$, implies that $\lVert y \rVert_{*}^{\mathcal{F}} \leq \lVert y \rVert_{\mathcal{F}}$ and finishes the proof. 
	\end{proof}

Unfortunately, one cannot deduce from Theorem \ref{final_isomorphism} that $X^\mathcal{F}$ and $X^*_\mathcal{F}$ are isomorphic. In fact, for some families $\mathcal{F}$ they are not.

\begin{eg}\label{tree} In this example we will consider finite dyadic trees, i.e. the sets  $T_N = \{0,1\}^{\leq N}$ of $0$-$1$ sequences of length at most $N$. Notice that, identifying elements of $T_N$ with
	natural numbers, using some fixed enumeration of $T_N$,
	we may think of $T_N$ as a subset of integers.  For $s\in \{0,1\}^N$ let $F_s = \{s_{\upharpoonright k} \colon k\leq N\}$ and let $\mathcal{F}_N$ be the hereditary closure of the family
	$\{F_s\colon s\in \{0,1\}^N\}$. So, $\mathcal{F}_N$ is the family of chains in $T_N$ and each $F_s$ is a maximal chain (a branch). For each
	$s\in \{0,1\}^N$ let $x_s$ be the vector in $\mathbb{R}^{T_N}$ given by $x_s = \chi_{F_s}$. Let $x = \sum_{s\in \{0,1\}^N} x_s$. Notice that $x(t) = |\{s\colon t \subseteq s\}|$. It can be checked by a simple induction (on $N$) that
	\[ \| x \|^{\mathcal{F}_N} = 2^N + 1\cdot 2^{N-1} + 2\cdot 2^{N-2} + 4\cdot 2^{N-3} + \dots + 2^{N-1}\cdot 1 \] 
	and so
	\[ \| x \|^{\mathcal{F}_N} = 2^N + N2^{N-1} = 2^N(1+N/2). \]
 Let $C>0$. Take $N$ so that $(1+N/2)>C$. Then
	\[ \| \sum_{s\in \{0,1\}^N} x_s \|^{\mathcal{F}_N} > C\cdot 2^N = C \sum_{s\in \{0,1\}^N} \| x_s \|^{\mathcal{F}_N}. \]

	So, at this point, for every $C>0$ we are able to find an example which violates the inequality from Remark \ref{normability} for the chosen $C$. Now, we will amalgamate all those $\mathcal{F}_N$'s into one example.

	For each $N$ fix an injection $k_N\colon T_N \to \mathbb{N}$ in such a way that the images $(k_N[T_N])_N$ is a partition of $\omega$. 
	Let \[ \mathcal{F} = \bigcup_N \{ k_N[F]\colon F\in \mathcal{F}_N\}. \] Then $\mathcal{F}$ is a compact hereditary family of subsets of $\omega$ covering $\omega$. But there is no $C>0$ such
	that 
	\[  \lVert \sum_{i\in A} x_i \rVert^\mathcal{F} \leq C \sum_{i\in A} \lVert x_i \rVert^\mathcal{F}  \]
	for every $A\subseteq \mathbb{N}$ and so, according to Remark \ref{normability}, $X^\mathcal{F}$ is not isomorphic to $X^*_\mathcal{F}$.
\end{eg}

The family from the above example was created to show that $X^\mathcal{F}$ may not be isomorphic to $X^*_\mathcal{F}$ for a compact family. Now, we will show that for the classical example of the compact family, the family of Schreier sets $\mathcal{S}$,
the same phenomenon occurs. The argument is more complicated. It indicates that if $\mathcal{F}$ is complicated enough, $X^\mathcal{F}$ is not isomorphic to $X^*_\mathcal{F}$. 

For any finite $A\subseteq\omega$ let $\phi(A)$ be the minimal number of consecutive Schreier sets in $A$ covering $A$. 

\begin{lem}\label{example} For any $N\in\omega$ there are sets $F_1,\dots,F_{2^N}\in\mathcal{S}$ so that for $x=\sum\limits_{j=1}^{2^N}\chi_{F_j}$ we have the following
\begin{enumerate}
	\item $x(i)\in\{2^r\colon r=0,\dots,N\}$ for any $i\in \mathrm{supp} (x)$,
    \item $\phi(A_r)\geq 2^{N-r}$, where $A_r=\{i\in\omega: x(i)=2^r\}$, for any $r=0,\dots,N$. 
\end{enumerate}
\end{lem}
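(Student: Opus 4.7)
I will prove the lemma by induction on $N$.

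For the base case $N=0$, take $F_1=\{1\}$: then $x=\chi_{\{1\}}$, $A_0=\{1\}$, and $\phi(A_0)=1=2^0$, so both conditions hold.

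For the inductive step $N\to N+1$, assume sets $G_1,\dots,G_{2^N}\in\mathcal S$ witness the lemma at level $N$, with sum $y$. I first observe (as part of the inductive hypothesis) that the $N$-construction may be placed in any sufficiently high interval of $\omega$: translating each $G_j$ upward by a common shift preserves the Schreier condition (since it only increases $\min G_j$, loosening $|G_j|\leq\min G_j$) and only shifts each $A_r^{(N)}$ rigidly, so the $\phi$-bounds survive. Using this, pick two disjoint intervals $[M_1,M_1']<[M_2,M_2']$ and place two translated copies $G^I_j,G^{II}_j$ of the $N$-construction in them. Choose a common ``root'' $m$ with $m<M_1$ and $m\geq 1+\max_j|G^I_j|=1+\max_j|G^{II}_j|$, and set
\[
F_j=\{m\}\cup G^I_j\quad(j\leq 2^N),\qquad F_{2^N+j}=\{m\}\cup G^{II}_j\quad(j\leq 2^N).
\]
Each $F_j$ is Schreier since $|F_j|=|G^{*}_j|+1\leq m=\min F_j$. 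Letting $x=\sum_j\chi_{F_j}$, the common element gives $x(m)=2^{N+1}$ and off $\{m\}$ the sum $x$ agrees with the shifted $y$ on each copy, so values lie in $\{2^r:r=0,\dots,N+1\}$, giving (1). Moreover $A_{N+1}^{(N+1)}=\{m\}$ has $\phi=1=2^0$.

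For $r\leq N$ we have $A_r^{(N+1)}=A_r^I\sqcup A_r^{II}$, where the two pieces are translated copies of $A_r^{(N)}$ and so each has $\phi\geq 2^{N-r}$ by induction. Given a consecutive Schreier cover $B_1<\dots<B_k$ of $A_r^{(N+1)}$, at most one block $B_{i_0}$ can ``bridge'' the two pieces. If none does, the cover splits cleanly and $k\geq\phi(A_r^I)+\phi(A_r^{II})\geq 2^{N+1-r}$. If a bridge $B_{i_0}$ exists, restricting $B_1,\dots,B_{i_0-1},B_{i_0}\cap A_r^I$ to $A_r^I$ and $B_{i_0}\cap A_r^{II},B_{i_0+1},\dots,B_k$ to $A_r^{II}$ shows $\phi(A_r^I)+\phi(A_r^{II})\leq k+1$, yielding the weaker bound $k\geq 2^{N+1-r}-1$.

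The hardest step is closing this off-by-one gap. My plan is to strengthen the inductive hypothesis to record, for each $r$, that the $\phi$-lower bound for $A_r^{(N)}$ is realized by a cover whose final block is \emph{saturated} (its size equals its minimum), a property preserved under upward translation. Under this strengthened invariant, the $A_r^I$-side partial cover $B_1,\dots,B_{i_0-1},B_{i_0}\cap A_r^I$ forces $B_{i_0}\cap A_r^I$ to be saturated; then the Schreier condition on $B_{i_0}$ forces $|B_{i_0}\cap A_r^{II}|\leq 0$, contradicting the existence of a bridge. Hence no bridge can occur and $\phi(A_r^{(N+1)})\geq\phi(A_r^I)+\phi(A_r^{II})\geq 2^{N+1-r}$. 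Finally, the same saturation invariant is propagated to the new construction by noting that the natural greedy-style Schreier cover of $A_r^{(N+1)}$ concatenates greedy covers of the two translated copies, preserving the saturated-tail property, so the strengthened hypothesis passes to level $N+1$.
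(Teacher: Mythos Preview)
Your inductive approach has a genuine gap at the very first step. You claim that translating the level-$N$ construction rigidly to the right preserves the $\phi$-bounds on the sets $A_r$, but this is false: right translation makes the Schreier condition \emph{looser}, so $\phi$ can strictly decrease. Concretely, carry out your step $N=0\to N=1$. The level-$0$ construction is a single set $\{1\}$. Your recipe translates this to two singletons $\{a\},\{b\}$ with $a<b$, chooses a root $m$ with $2\leq m<a$, and sets $F_1=\{m,a\}$, $F_2=\{m,b\}$. Then $A_0^{(1)}=\{a,b\}$, and since $a\geq m+1\geq 3$ the pair $\{a,b\}$ is itself Schreier, so $\phi(A_0^{(1)})=1<2=2^{1-0}$. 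The lemma already fails for your construction at $N=1$.

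The saturation idea you propose for handling bridges does not address this: even granting a perfectly working bridge argument, the inputs $\phi(A_r^I),\phi(A_r^{II})\geq 2^{N-r}$ that you feed into it are simply not available, because $A_r^I$ and $A_r^{II}$ are translated copies and translation destroys the bound. (Separately, the bridge argument as written is also incomplete: knowing that \emph{some} minimal cover of $A_r^I$ has a saturated last block does not force an \emph{arbitrary} cover $B_1,\dots,B_{i_0-1},B_{i_0}\cap A_r^I$ to have that property; you would at least need the stronger invariant that \emph{every} minimal cover has a saturated last block, and then verify that this propagates, which you do not do.)

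The paper's proof avoids both difficulties by constructing the $2^N$ sets directly on a dyadic tree $\{0,1\}^{\leq N}$, attaching to each node an interval whose length is chosen large enough relative to all preceding intervals that any Schreier interval of $A_r$ meeting one level-$r$ interval $F_{s_1}$ must terminate before the end of the next one $F_{s_2}$. This yields $\phi(A_r)\geq |L_r|=2^{N-r}$ directly, with no inductive translation and no bridge bookkeeping.
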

\begin{proof}
	Fix $N\in\omega$. We shall again use the dyadic tree $T_N = \{0,1\}^{\leq N}$. This time we will assign to each element of $T_N$. First, we will linearize the inclusion ordering on $T_N$: define $\preceq$ on $T_N$ by
	\[ s \preceq t \ \ \  \mbox{ if } \ \ \ \big( t\subseteq s \mbox{ or } (t \mbox{ is incomparable with }s \mbox{ and } (s\cap t)^\frown 1 \subseteq s) \big). \]
Notice that $s \cap t$ is the longest element of $T_N$ which is extended both by $s$ and $t$.
Below we enclose a drawing of $T_3$ with the nodes enumerated according to $\preceq$.

	\begin{center}
\begin{tikzpicture}[auto, semithick, level/.style={sibling distance=60mm/#1}]
	\node {15}
        child { node {14}
            child { node {13}
                child { node {12} }
                child { node {11} }
            }
            child { node {10}
                child { node {9} }
                child { node {8} }
            }
        }
        child { node {7}
			child { node {6}
                child { node {5} }
                child { node {4} }
            }
            child { node {3}
                child { node {2} }
                child { node {1} }
            }
        };
\end{tikzpicture}
	\end{center}
\bigskip

	By $t_0$ we will denote the smallest element of $T_N$, i.e. the sequence constantly equal $1$. For $s\in T_N$, $s\ne t_0$ denote by $s^-$ be the immediate predecessor of $s$ and let $s'$ be the smallest, with respect to $\preceq$, descendant of
	$s$. Note that $s'$ is always a terminal node. For $r\leq N$ let $L_r$ be the $r$'s level of $T_N$, i.e. the set of elements $T_N$ of length $r$.

	For every $s \in T_N$ we will define an interval $I_s$ inductively, with respect to $\preceq$. Let $I_{t_0}=\{N+1\}$. If $s \ne t_0$ is a terminal node, then let $I_s$ be an interval of length $2\max |I_{s^-}|$ and such that $\min I_{s} \geq
	(2N+1)\max I_{s^-}$. For a non-terminal node $s$ let $I_s$ be an interval of length $|I_{s'}|$ and such that $\min I_{s} > \max I_{s^-}$. In this way we will get a sequence of intervals $(I_s)$ such that $s \preceq t$ iff $I_s < I_t$.

Each (maximal) branch $\mathcal{B}$ of $T_N$ induces a set $F_\mathcal{B}=\bigcup_{s\in\mathcal{B}}I_s$; the sets obtained in this way form the family $(F_j)_{j=1}^{2^N}$ defining the vector $x$ promised in the statement. First, notice that if $s\in
	L_{N-r}$, then $s$ belongs to $2^r$ many branches. So, $x$ satisfies the condition (1) of the statement. By the same reason we have $A_r=\bigcup_{s\in L_{N-r}} I_s$ for each $r$.

		We will check that the family $\{F_\mathcal{B}\colon \mathcal{B} \mbox{ is a branch}\}$ is as desired by proving two claims.
\medskip

\textbf{Claim 1.} $F_\mathcal{B} \in \mathcal{S}$ for every branch $\mathcal{B}$. 
\medskip
	
	Consider first the branch $\mathcal{B}_0$ containing $t_0$. By definition $|I_{s}|=1$ for each $s\in\mathcal{B}_0$, thus $|F_{\mathcal{B}_0}|\leq N+1$ (and, clearly, $\min F_{\mathcal{B}_0}
	= \min F_{t_0} = N+1$, so $F_{\mathcal{B}_0}\in \mathcal{S}$). Pick now branch $\mathcal{B}$ containing
	any other terminal node $s\in T_N$. By definition, $\min F_\mathcal{B}=\min I_s\geq (2N+1)\max I_{s^-}$. On the other hand, for every $t\in\mathcal{B}$, $t' \preceq s$ and so $|I_{t}|=|I_{t'}|\leq |I_s|= 2\max I_{s^-}$. It follows that  \[
		|F_\mathcal{B}|\leq (2N+1) \max I_{s^-} \leq \min F_\mathcal{B}, \] and so $F_\mathcal{B} \in \mathcal{S}$. 
\medskip

	\textbf{Claim 2.} $\phi(A_r)\geq |L_{N-r}|=2^{N-r}$ for each $r$. 
\medskip
	  
	Fix $r<N$ and an interval $I\in\mathcal{S}$ in $A_r = \bigcup_{s\in L_{N-r}} I_s$ (i.e. $I = J \cap A_r$, where $J$ is an interval). If for some $s_1 \preceq s_2\in L_{N-r}$ we have $I\cap I_{s_1}\neq\emptyset$, then $\max I<\max I_{s_2}$.  Indeed, notice that  $|I|\leq \max I_{s_1}$.  As $s_1\preceq
	s_2$, and $s_1$, $s_2$ belong to the same level, $s_1\preceq s_2'$, hence $2\max I_{s_1}\leq|I_{s_2'}|=|I_{s_2}|$ and so $\max I < \max I_{s_2}$. But this means that $\phi(A_r)\geq |L_{N-r}|=2^{N-r}$
\end{proof}

\begin{thm}
	The space $X^\mathcal{S}$ is not isomorphic to $\widehat{X}^\mathcal{S}$ (and, thus, it is not isomorphic to $X^*_\mathcal{S}$).
\end{thm}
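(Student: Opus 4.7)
The plan is to invoke Remark \ref{normability}: $X^\mathcal{S}$ is isomorphic to $\widehat{X}^\mathcal{S}$ (which is isometric to $X^*_\mathcal{S}$ by Theorem \ref{final_isomorphism}) precisely when $\lVert\cdot\rVert^\mathcal{S}$ is $1$-convex on $c_{00}$, i.e., there exists $C>0$ with $\lVert\sum_j x_j\rVert^\mathcal{S}\leq C\sum_j\lVert x_j\rVert^\mathcal{S}$ for every finite sequence in $c_{00}$. I will refute this for every $C$ by letting $N\to\infty$ in the vectors supplied by Lemma \ref{example}.

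For each $N$, take the Schreier sets $F_1,\dots,F_{2^N}$ and $x=\sum_{j=1}^{2^N}\chi_{F_j}$ from Lemma \ref{example}, and set $A_r=\{i:x(i)=2^r\}$. Because each $F_j\in\mathcal{S}$, taking the partition $\{F_j\}\cup\{\{k\}:k\notin F_j\}$ shows $\lVert\chi_{F_j}\rVert^\mathcal{S}=1$, so $\sum_j\lVert\chi_{F_j}\rVert^\mathcal{S}=2^N$. The theorem then reduces to the lower bound $\lVert x\rVert^\mathcal{S}\gtrsim N\cdot 2^N$.

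For any Schreier partition $\mathcal{P}$ of $\omega$, setting $r(P)=\max\{r:P\cap A_r\neq\emptyset\}$ for $P$ meeting $\mathrm{supp}(x)$ gives $\lVert x\rVert^\mathcal{P}=\sum_{P}2^{r(P)}$. The geometric-series estimate $\sum_{r:P\cap A_r\neq\emptyset}2^r\leq 2\cdot 2^{r(P)}$ together with a swap of summation yields
\[
\lVert x\rVert^\mathcal{P}\geq \frac{1}{2}\sum_{r=0}^N 2^r\,\tilde m_r,\qquad \tilde m_r:=|\{P\in\mathcal{P}:P\cap A_r\neq\emptyset\}|.
\]
Since $\{P\cap A_r:P\cap A_r\neq\emptyset\}$ is a Schreier partition of $A_r$ into $\tilde m_r$ parts, a bound $\tilde m_r\geq 2^{N-r}=|L_r|$ yields $\lVert x\rVert^\mathcal{S}\geq\frac{N+1}{2}\cdot 2^N$, so the ratio $\lVert x\rVert^\mathcal{S}/\sum_j\lVert\chi_{F_j}\rVert^\mathcal{S}\geq(N+1)/2$ diverges as $N\to\infty$, contradicting $1$-convexity.

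The principal obstacle is showing that every Schreier partition of $A_r$ uses at least $|L_r|$ parts. I plan to adapt the argument behind property (3) of Lemma \ref{example}, which handles Schreier \emph{intervals}, to arbitrary Schreier subsets $B\subseteq A_r$. The key inputs remain: if $\min B\in F_{s_0}$ with $s_0\in L_r$ then $|B|\leq\max F_{s_0}$, whereas the smallest-terminal-descendant recipe in the construction forces $|F_t|\geq 2\max F_{s_0}$ for every $t\succ s_0$ in $L_r$. Unlike an interval, a general Schreier set may visit many distant $F_t$'s by picking a single element from each; I will control this by peeling off the cluster $F_{s^{(1)}}$ containing $\min A_r$ (which any partition must cover using a piece of size $\leq\min F_{s^{(1)}}$) and arguing inductively that the remaining part of $A_r$ retains essentially the same clustered structure at one level lower, hence requires at least $|L_r|-1$ further parts. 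Completing this combinatorial refinement of (3) and combining with the displayed inequalities yields the desired unbounded ratio and, via Remark \ref{normability}, the theorem.
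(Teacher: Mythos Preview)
Your overall strategy is correct and, once completed, is actually cleaner than the paper's proof. However, the ``principal obstacle'' you flag at the end is not an obstacle at all, and your proposed inductive peeling argument is unnecessary.

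The bound $\tilde m_r\geq 2^{N-r}$ follows immediately from Lemma~\ref{example}(2) together with a standard fact the paper invokes explicitly in its own proof: the partition of a finite set into \emph{consecutive} Schreier sets has the smallest cardinality among \emph{all} Schreier partitions of that set. Since $\mathcal{S}$ is hereditary, the family $\{P\cap A_r : P\in\mathcal{P},\ P\cap A_r\neq\emptyset\}$ is a Schreier partition of $A_r$; hence $\tilde m_r\geq \phi(A_r)\geq 2^{N-r}$. You do not need to extend property~(3) of the lemma from intervals to arbitrary Schreier subsets, nor do you need the peeling induction you sketch.

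It is worth comparing the two arguments. The paper proceeds by contradiction: assuming $\lVert x_N\rVert^\mathcal{S}\leq 2^{N+M}$ for all $N$, it introduces the level counts $l_r=|\{l:\max_{i\in B_l}x_N(i)=2^r\}|$ for an optimal partition $(B_l)$, translates the hypothesis and Lemma~\ref{example}(2) into the pair of constraints $\sum_r l_r 2^r\leq 2^{N+M}$ and $l_r+\cdots+l_N\geq 2^{N-r}$, and then derives a contradiction by a somewhat delicate manipulation of these inequalities. Your route is more direct: the geometric-series trick $\sum_{r:P\cap A_r\neq\emptyset}2^r\leq 2\cdot 2^{r(P)}$ plus the swap of summation gives the explicit lower bound $\lVert x\rVert^\mathcal{P}\geq\tfrac{1}{2}\sum_r 2^r\tilde m_r\geq \tfrac{N+1}{2}\,2^N$ for every Schreier partition $\mathcal{P}$, which immediately yields the unbounded ratio. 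This is both shorter and gives a quantitative bound the paper's argument does not.
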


\begin{proof}
For every $N\in\omega$ let $x_N=\sum\limits_{j=1}^{2^N}\chi_{F_j^N}$ be as in Lemma \ref{example}. 
Then,  we have $\|\chi_{F_j^N}\|^\mathcal{S}=1$ for any $j=1,\dots,2^N$ and $N\in\omega$, as each $F^N_j$ is a Schreier set. Consequently, 
\[ \sum_j \lVert \chi_{F_j^N} \rVert = 2^N. \]

	On the other hand, we will show that  $2^{-N}\|x_N\|^\mathcal{S} \to\infty$ as $N\to\infty$. Therefore, for each $C>0$ there is $N$ such that \[ \lVert \sum_j  \chi_{F_j^N} \rVert > C \cdot 2^N \]
	and so, by Remark \ref{normability}, we will be done.
	
Suppose, towards contradiction, that there is $M\in\omega$ with $\|x_N\|^\mathcal{S}\leq 2^{M+N}$. For a fixed $N\in\omega$ pick Schreier sets $(B_l)_l$ witnessing $\lVert x \rVert^\mathcal{S}$, i.e. such that
	$\|x_N\|^\mathcal{S}=\sum_{l} \max\{x_N(i)\colon i\in B_l\}$. Let \[l_r=|\{l: \max\{x_N(i)\colon i\in B_l\} = 2^r\}|\] for  $r=0,\dots,N$. Then 
\begin{equation}
 2^{N+M}\geq \sum_{r=0}^N l_r2^r.   
\end{equation}
	On the other hand, as $A^r\subseteq B^r:=\bigcup\{B_l: \max \{x_N(i)\colon i\in B_l\} \geq 2^r \}$, 
\begin{equation}
    2^{N-r}\leq\phi(A^r)\leq \phi(B^r)\leq l_r+l_{r+1}+\dots+l_N \text{ for every }r=0,\dots,N-1.
\end{equation}
	The first inequality follows from Lemma \ref{example}, whereas the last one from the definition of $\phi$: the partition of a set into consecutive Schreier sets is the smallest in the sense of cardinality of all the partitions into Schreier sets.
	
In order to simplify the notation we write 
$m_r=l_{N-r}$, $r=0,\dots,N$. Then
\begin{equation}\label{eq1}
    \sum_{r=0}^N m_r2^{-r}\leq 2^M,
\end{equation}
thus
	\[    m_r\leq 2^{M+r} \text{ for every }r=0,\dots,N.\]
So, for $r> M+2$ we have
\begin{equation}\label{eq2}
	 m_0 + \dots + m_{r-M-2} \leq 2^M + \dots + 2^{M+r-M-2} = 2^M + \dots + 2^{r-2} \leq 2^{r-1}. 
\end{equation}
On the other hand $2^{N-r}\leq m_{N-r}+\dots+m_0$ for each $r=0,\dots,N$, that is
\begin{equation}\label{eq3}
    2^r\leq m_r+\dots+m_0 \text{ for any }r=0,\dots,N
\end{equation}
Therefore, for $r>M+2$ we have, by \eqref{eq2} and \eqref{eq3},
\begin{equation}\label{eq4}
    2^r\leq m_r+\dots+m_{r-M-2}+\dots+m_0\leq m_r+\dots +m_{r-M-1}+2^{r-1}.
\end{equation}
Hence, for every $r>M+2$ there is $i\in \{r-M-1,\dots,r\}$ with 
	\[ m_i\geq 2^{r-1}(M+2)^{-1}\] and so \[ m_i 2^{-i}\geq \frac{1}{2(M+2)}. \]
	By diving (sufficiently big) $N$ into intervals of size $M+2$ and subsequently using the above fact we see that 
\begin{equation}
	\sum_{r=0}^N m_r2^{-r}\geq \frac{N}{M+2} \cdot \frac{1}{2(M+2)} = \frac{N}{2(M+2)^2}
\end{equation}
which yields a contradiction with \eqref{eq1} for sufficiently big $N$.
\end{proof}
\begin{rem} In this article we are interested in the spaces induced by families of finite sets, but perhaps one can prove a more general version of the above theorem, working for families containing also infinite sets, but with $FIN(\lVert \cdot \rVert^\mathcal{F})$ instead of
	$X^\mathcal{F}$ (for $\mathcal{F}$ as in Theorem \ref{isomorphism}, those spaces coincide, see Theorem \ref{exh=fin}). E.g. notice that if $\mathcal{F}$ is the family of all subsets of $\omega$,
	then $X_\mathcal{F}$ is isometrically isomorphic to $\ell_1$ and $FIN(\lVert \cdot \rVert^\mathcal{F})$ is isometrically isomorphic to $\ell_\infty$.
\end{rem}

\begin{rem} There is also another approach of producing a norm on the dual space of combina\-torial-like spaces, than that considered above. In  \cite{Diana} Diana Ojeda-Aristizabal proposed a formula for the norm of the original space constructed by Tsirelson, which can be also adapted to the case of mixed Tsirelson spaces. The case of the original Tsirelson space is somewhat similar to the case of duals to combinatorial spaces; its (pre)dual norm can be derived from a precise formula, whereas the norm of the very space does not possess analogous expression. The formula proposed in \cite{Diana} is based on a dualization of the Figiel-Johnson norm (similar to our case), but yields a norm, instead of just a quasi-norm, via including in the definition of $\lVert x\rVert$ the expression $\inf\{\lVert y\rVert+\lVert z\rVert: y+z=x\}$, which forces the triangle inequality. As it is noted in \cite{Diana}, this definition does not permit to calculate the norm of a vector in a way similar to the case of Figiel-Johnson norm on the dual of the original Tsirelson space. In contrast, not including the above expression in our definition allows to work with the quasinorm on duals of combinatorial spaces, as it shown in the next section, with the price of the lack of the triangle inequality.
\end{rem}

\section{On $\ell_1$-saturation of $X^\mathcal{F}$'s} \label{l1-schur}

In this section we consider spaces $X^{\mathcal{F}}$ for a family $\mathcal{F}$ satisfying an additional condition (see definition \ref{large_family}). It is known that the dual space to $X_{\mathcal{S}}$ is an example of $\ell_1$-saturated space which does not enjoy the Schur property.  
	
We will show that the same holds for $X^{\mathcal{F}}$, for any compact family $\mathcal{F}$. 
We think that our proof is considerably easier and it indicates
that studying $X^\mathcal{F}$ is easier than $X^*_\mathcal{F}$. This is why $X^\mathcal{F}$ may be helpful.

\begin{df} \label{schur_def}
	We say that a space $X$ has a \textit{Schur property} if every weakly null sequence is convergent to zero in the norm. 
	\end{df}

	\begin{df} \label{l1-def}
	A Banach space $X$ is \textit{$\ell_{1}$-saturated} if its every closed, infinitely dimensional subspace $E$ contains an isomorphic copy of $\ell_{1}$. Equivalently, if $E = \overline{span(x_{n})}$ for some basic sequence $(x_{n})$ in $X$, then there is a sequence $(y_{n})$ in $E$, which is equivalent to the standard basis of $\ell_{1}$.
	\end{df}

In the theory of Banach spaces the classical theorem of Rosenthal \cite{Rosenthal-l1} implies that every Banach space with Schur property is $\ell_{1}$-saturated. The question if this implication can be reversed was open for many years. Now, several
counter-examples are known, including the dual to the Schreier space. 
The lack of Schur property is quite straightforward. On the other hand, satisfying of $\ell_{1}$-saturation property follows form \cite{Galego}. 

We will show that Definition \ref{schur_def} and Definition \ref{l1-def} are also valid for quasi-Banach spaces and that $X^{\mathcal{S}}$ is an $\ell_{1}$-saturated quasi-Banach space which does not have Schur property. 

We will prove the lack of Schur property using Theorem \ref{isomorphism} for families satisfying certain property introduced by J.Lopez-Abad and S.Todorcevic in \cite{Abad-Todorcevic}.

\begin{df} \label{large_family}
A family $\mathcal{F}$ of subsets of $\omega$ is called \emph{large} when $\mathcal{F} \cap [M]^{n} \neq \emptyset$ for every infinite subset $M$ of $\omega$ and for every $n \in \omega$.  
\end{df}

\begin{prop} \label{Schur}
Suppose that $\mathcal{F}$ is a large family of finite subsets of $\omega$. Then $X^\mathcal{F}$ does not have the Schur property. 
\end{prop}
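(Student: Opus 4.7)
The plan is to exhibit the standard unit vector sequence $(e_n)\subset X^\mathcal{F}$ as a weakly null sequence that does not tend to $0$ in quasi-norm, which directly contradicts the Schur property.

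First I would compute $\lVert e_n\rVert^\mathcal{F}$. For any partition $\mathcal{P}\in\mathbb{P}_\mathcal{F}$ there is a unique $F\in\mathcal{P}$ with $n\in F$, and for this $F$ we have $\sup_{k\in F}|e_n(k)|=1$ while all other summands vanish, so $\lVert e_n\rVert^{\mathcal{P}}=1$. Hence $\lVert e_n\rVert^\mathcal{F}=1$ for every $n$ (the partition into singletons from $\mathcal{F}$, which exists since $\mathcal{F}$ is hereditary and covers $\omega$, realises this value). In particular $(e_n)$ does not converge to $0$ in $\lVert\cdot\rVert^\mathcal{F}$.

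Next I would use Theorem \ref{isomorphism} to identify $(X^\mathcal{F})^*$ isometrically with $FIN(\lVert\cdot\rVert_\mathcal{F})=\{y\in\mathbb R^\omega:\lVert y\rVert_\mathcal{F}<\infty\}$ via the map $y\mapsto(y(e_n))_n$, so that a functional $y$ acts as $y(x)=\sum_n x(n)y(n)$ on $x\in X^\mathcal{F}$. Under this identification, weak nullness of $(e_n)$ becomes the coordinatewise statement that $y(n)\to 0$ for every $y\in FIN(\lVert\cdot\rVert_\mathcal{F})$.

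The last step is where largeness enters, and it is the only substantive point. Suppose, towards a contradiction, that $y\in FIN(\lVert\cdot\rVert_\mathcal{F})$ and $|y(n_k)|\geq\varepsilon>0$ along some increasing sequence $n_1<n_2<\dots$. Set $M=\{n_k:k\in\omega\}$, an infinite subset of $\omega$. By largeness, for every $n\in\omega$ we can pick $F_n\in\mathcal{F}\cap[M]^n$, and then
\[
\lVert y\rVert_\mathcal{F}\;\geq\;\sum_{k\in F_n}|y(k)|\;\geq\;n\varepsilon,
\]
which contradicts $\lVert y\rVert_\mathcal{F}<\infty$. Therefore $y(n)\to 0$ for every $y\in(X^\mathcal{F})^*$, so $(e_n)$ is weakly null; combined with $\lVert e_n\rVert^\mathcal{F}=1$, this shows $X^\mathcal{F}$ fails the Schur property.

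The only real obstacle is the correct bookkeeping of the dual identification; once Theorem \ref{isomorphism} is available, the argument reduces to the one-line computation using largeness, which is exactly the combinatorial input designed to prohibit bounded coordinate sequences that do not vanish.
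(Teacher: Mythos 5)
Your proposal is correct and follows essentially the same route as the paper: take the unit vector basis $(e_n)$, note $\lVert e_n\rVert^{\mathcal F}=1$, and use Theorem \ref{isomorphism} to identify functionals with elements of $FIN(\lVert\cdot\rVert_{\mathcal F})$, where largeness forces the coordinates to vanish and hence $(e_n)$ is weakly null. The only differences are cosmetic (your explicit computation that every partition gives $\lVert e_n\rVert^{\mathcal P}=1$, and extracting the subsequence $(n_k)$ rather than the set $M$ directly).
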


	\begin{proof}
		Consider the sequence $(e_{n})$, the standard Schauder basis. We claim that this sequence is weakly null, but it is not convergent to zero in the quasi-norm.

	Indeed, fix $\varphi \in (X^{\mathcal{F}})^{*}$ and denote $y(n) = \varphi(e_{n})$. By Theorem \ref{isomorphism} we have $y = (y(n)) \in FIN(\lVert \cdot \rVert_{\mathcal{F}})$, so $\lVert y \rVert_{\mathcal{F}} < \infty$. If $ \lim
	\limits_{n \rightarrow \infty} y(n) \neq 0$, then there is an infinite $M\subseteq \omega$ and $c>0$ such that $\lvert y(m) \rvert \geq c$ for each $m\in M$. By the assumption, for each $k\in \omega$ there is $F\in \mathcal{F}$ such that $F\subseteq M$,
	$|F| = k$ and so 
	$\mathlarger{\sum}_{i \in F} \lvert y(i) \rvert = c \cdot k$. Hence, $\lVert y \rVert_\mathcal{F} = \sup \limits_{F \in \mathcal{F}} \mathlarger{\sum}_{i \in F} \lvert y(i) \rvert = \infty$, a contradiction.

	On the other hand, for every $n \in \omega$ we have $\lVert e_{n} \rVert^{\mathcal{F}} = 1$, so $(e_{n})$ is not convergent to zero in the quasi-norm.
	\end{proof}
One can easily see that the Schreier family $\mathcal{S}$ is large in the sense of the Definition \ref{large_family}. Hence, in particular, $X^\mathcal{S}$ does not have the Schur property. 


\begin{df}\label{k-stable}
We say that the vector $x \in X^{\mathcal{F}}$ is \textit{$k$-stable}, $k\in\omega$, if $\lVert x \rVert_\infty\leq \dfrac{1}{2k} \lVert x \rVert ^{\mathcal{F}}$. 
	\end{df}
Note that for any $k$-stable $x\in X^\mathcal{F}$ and any $F \in \mathcal{F}$ we have $\| P_{F}(x) \|^{\mathcal{F}} \leq \dfrac{1}{2k} \|x \|^{\mathcal{F}}$.

\begin{lem} \label{simple_lemma}
	Let $x_{1},x_{2} \in c_{00}$ be such that $\mathrm{supp}(x_1) < \mathrm{supp}(x_2)$. If $\mathcal{P}$ is a partition such that for every $P \in \mathcal{P}$ we have  $P \cap \mathrm{supp}(x_1) = \emptyset$ or $P \cap \mathrm{supp}(x_2) = \emptyset$, then \[ \lVert x_{1} + x_{2}
		 \rVert^{\mathcal{P}} = \lVert x_{1} \rVert^{\mathcal{P}} + \lVert x_{2} \rVert^{\mathcal{P}}.\]
	\end{lem}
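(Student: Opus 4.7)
The plan is to prove the equality by splitting the partition $\mathcal{P}$ according to which of the two supports each block meets, and then computing the quasi-norms on each part. Since $\mathrm{supp}(x_1) < \mathrm{supp}(x_2)$ we have in particular $\mathrm{supp}(x_1) \cap \mathrm{supp}(x_2) = \emptyset$, so on any set $A \subseteq \omega$ the vector $x_1 + x_2$ agrees with $x_1$ on $A \cap \mathrm{supp}(x_1)$, with $x_2$ on $A \cap \mathrm{supp}(x_2)$, and is zero elsewhere.

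Using the hypothesis, decompose $\mathcal{P} = \mathcal{P}_1 \sqcup \mathcal{P}_2 \sqcup \mathcal{P}_0$, where
\[ \mathcal{P}_j = \{P \in \mathcal{P} \colon P \cap \mathrm{supp}(x_j) \neq \emptyset\} \quad (j=1,2), \]
and $\mathcal{P}_0$ consists of the blocks disjoint from both supports. The hypothesis ensures $\mathcal{P}_1 \cap \mathcal{P}_2 = \emptyset$, so this really is a disjoint decomposition. For $P \in \mathcal{P}_1$ we have $P \cap \mathrm{supp}(x_2) = \emptyset$, hence $\sup_{k \in P}|(x_1+x_2)(k)| = \sup_{k \in P}|x_1(k)|$ and $\sup_{k \in P}|x_2(k)| = 0$; symmetrically for $P \in \mathcal{P}_2$. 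For $P \in \mathcal{P}_0$ all three suprema vanish.

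Summing the definition $\lVert y \rVert^\mathcal{P} = \sum_{P \in \mathcal{P}} \sup_{k \in P}|y(k)|$ over the three parts gives
\[ \lVert x_1 + x_2 \rVert^\mathcal{P} = \sum_{P \in \mathcal{P}_1}\sup_{k \in P}|x_1(k)| + \sum_{P \in \mathcal{P}_2}\sup_{k \in P}|x_2(k)|, \]
while the same bookkeeping yields $\lVert x_1 \rVert^\mathcal{P} = \sum_{P \in \mathcal{P}_1}\sup_{k \in P}|x_1(k)|$ and $\lVert x_2 \rVert^\mathcal{P} = \sum_{P \in \mathcal{P}_2}\sup_{k \in P}|x_2(k)|$. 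Adding these two identities gives the desired equality.

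There is essentially no obstacle here; the lemma is a direct unpacking of the definition once one observes that the hypothesis on $\mathcal{P}$ forces each block to see at most one of the two supports. The order assumption $\mathrm{supp}(x_1) < \mathrm{supp}(x_2)$ is used only to guarantee disjointness of supports (and presumably matters for how the lemma is applied later), but the argument itself depends only on disjointness together with the compatibility hypothesis on $\mathcal{P}$.
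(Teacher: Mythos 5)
Your proof is correct and follows essentially the same route as the paper's: the hypothesis forces each block of $\mathcal{P}$ to meet at most one of the two supports, so on each block the supremum of $|x_1+x_2|$ equals the supremum of whichever $x_j$ survives there, and summing over blocks gives the equality. Your explicit three-way decomposition of $\mathcal{P}$ is just a more detailed bookkeeping of the paper's one-line observation.
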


	\begin{proof}
	Let $P \in \mathcal{P}$. Then, either $x_{1}$ or $x_{2}$ vanishes on $P$, so for each $k \in P$, $\lvert x_{1}(k) + x_{2}(k) \rvert = \lvert x_{1}(k) \rvert + \lvert x_{2}(k) \rvert$. It implies immediately that $\lVert x_{1} + x_{2} \rVert^{\mathcal{P}} = \lVert x_{1} \rVert^{\mathcal{P}} + \lVert x_{2} \rVert^{\mathcal{P}}$.
	\end{proof}

	\begin{prop} \label{stable-proposition}
		Let $x,y \in c_{00}$ be such that 
	\begin{enumerate}[(i)]
		\item $\mathrm{supp}(x) < \mathrm{supp}(y)$,
		\item $y$ is $k_{0}$-stable, where $k_{0} = \max \mathrm{supp}(x)$.
	\end{enumerate}
	Then for each scalar $\lambda$  \[\lVert x+ \lambda y \rVert^{\mathcal{F}} \geq \lVert x \rVert^{\mathcal{F}} + \frac{\lambda}{2}\lVert y \rVert^{\mathcal{F}}.\]
	\end{prop}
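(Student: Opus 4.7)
The plan is to fix an arbitrary partition $\mathcal{P}\in\mathbb{P}_\mathcal{F}$ and lower bound $\lVert x+\lambda y\rVert^\mathcal{P}$ by $\lVert x\rVert^\mathcal{F}+\frac{|\lambda|}{2}\lVert y\rVert^\mathcal{F}$, so that taking the infimum over $\mathcal{P}$ gives the claim (the statement should read $|\lambda|/2$, as $\lambda$ may be negative). Since $x$ and $\lambda y$ have disjoint supports, for every $P\in\mathcal{P}$
\[
\sup_{k\in P}|x(k)+\lambda y(k)|=\max(a_P,\,|\lambda|b_P),
\]
with $a_P=\sup_{k\in P}|x(k)|$ and $b_P=\sup_{k\in P}|y(k)|$. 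Split $\mathcal{P}=\mathcal{P}_x\sqcup\mathcal{P}_y$ where $\mathcal{P}_x=\{P\in\mathcal{P}:P\cap\mathrm{supp}(x)\neq\emptyset\}$: on $\mathcal{P}_y$ we have $a_P=0$ so the sup equals $|\lambda|b_P$, while on $\mathcal{P}_x$ I simply bound the sup from below by $a_P$, discarding the $|\lambda|b_P$ contribution. This yields
\[
\lVert x+\lambda y\rVert^\mathcal{P}\ \geq\ \sum_{P\in\mathcal{P}_x}a_P+|\lambda|\sum_{P\in\mathcal{P}_y}b_P.
\]

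For the first sum, the family $\{P\cap[1,k_0]:P\in\mathcal{P}\}\cup\{\{n\}:n>k_0\}\cup\{\emptyset\}$ belongs to $\mathbb{P}_\mathcal{F}$ (hereditariness of $\mathcal{F}$ makes $P\cap[1,k_0]\in\mathcal{F}$, and singletons lie in $\mathcal{F}$ because $\mathcal{F}$ covers $\omega$ and is hereditary); evaluating $x$ against this partition gives exactly $\sum_{P\in\mathcal{P}_x}a_P$, so $\sum_{P\in\mathcal{P}_x}a_P\geq\lVert x\rVert^\mathcal{F}$. An analogous construction using $\{P\cap(k_0,\infty):P\in\mathcal{P}\}$ together with singletons below $k_0$ shows that $\sum_{P\in\mathcal{P}}b_P\geq\lVert y\rVert^\mathcal{F}$.

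The key point — and the reason the constant $4k_0$ appears in Definition \ref{k-stable} — is recovering $\lVert y\rVert^\mathcal{F}$ from the restricted sum $\sum_{P\in\mathcal{P}_y}b_P$. Each $P\in\mathcal{P}_x$ contains a point of $\mathrm{supp}(x)\subseteq[1,k_0]$ and the sets of $\mathcal{P}$ are pairwise disjoint, hence $|\mathcal{P}_x|\leq k_0$. The $k_0$-stability hypothesis gives $b_P\leq\lVert y\rVert_\infty\leq\frac{1}{4k_0}\lVert y\rVert^\mathcal{F}$ for every $P$, so
\[
\sum_{P\in\mathcal{P}_x}b_P\ \leq\ k_0\cdot\tfrac{1}{4k_0}\lVert y\rVert^\mathcal{F}\ =\ \tfrac14\lVert y\rVert^\mathcal{F},
\]
and therefore $\sum_{P\in\mathcal{P}_y}b_P\geq\tfrac34\lVert y\rVert^\mathcal{F}\geq\tfrac12\lVert y\rVert^\mathcal{F}$. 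Combining the two estimates yields the required inequality. The main obstacle is that the naive bound $\max(a,|\lambda|b)\geq\tfrac12(a+|\lambda|b)$ would lose a factor of two on the $\lVert x\rVert^\mathcal{F}$ side; instead one must discard the $b_P$ contribution on the (at most $k_0$) mixed sets entirely, and the stability hypothesis is tailored precisely so that this discarded mass is only a quarter of $\lVert y\rVert^\mathcal{F}$.
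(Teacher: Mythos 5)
Your proof is correct and follows essentially the same route as the paper's: both isolate the (at most $k_0$) pieces of the partition that meet $\mathrm{supp}(x)$, discard the $y$-contribution on those pieces, and use $k_0$-stability together with the bound $|\mathcal{P}_x|\leq k_0$ to show the discarded mass is at most a fixed fraction of $\lVert y\rVert^\mathcal{F}$. The only minor (and arguably favourable) difference is that you bound $\lVert x+\lambda y\rVert^{\mathcal{P}}$ for an arbitrary partition $\mathcal{P}$ and take the infimum at the end, whereas the paper works with partitions attaining the infima for $x$, $y$ and $x+y$; your remark that the conclusion should read $|\lambda|/2$ is also correct and consistent with how the proposition is applied in Theorem \ref{ell1}.
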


	\begin{proof} First, notice that if $y$ is $k$-stable, then $\lambda y$ is $k$-stable and so we may assume that $\lambda=1$.

	Since $x+y$ is finitely supported, there exists partition $\mathcal{P}$ such that 
$\lVert x+y \rVert^{\mathcal{F}} = \lVert x+y \rVert^{\mathcal{P}}.$
		
		Let $A = \lbrace n \in \mathrm{supp}(y): \forall P \in \mathcal{P} \ (n \in P \Rightarrow P \cap \mathrm{supp}(x) \neq \emptyset) \rbrace$. Note that the sequences $x$, $P_{\omega \setminus A}(y)$ and the partition $\mathcal{P}$ satisfy the assumption of Lemma \ref{simple_lemma}. In addition, there are pairwise disjoint sets $F_1,...,F_{k_0} \in \mathcal{F}$ such that $A \subseteq \bigcup \limits_{i \leq k_0} F_i$. So, using the assumption of $k_0$-stability (see the remark after Definition \ref{k-stable}) and Lemma \ref{quasi-quasi} (a) we obtain
	\[
	\|P_{A}(y) \|^{\mathcal{F}} \leq \sum \limits_{i \leq k_0} \|P_{F_i}(y)\|^{\mathcal{F}} \leq \dfrac{1}{2} \|y \|^{\mathcal{F}}
	\]
		
		and thus
	\[
	\lVert x+y \rVert^{\mathcal{F}} = \lVert x+y \rVert^{\mathcal{P}} \geq \lVert x + P_{\omega \setminus A}(y) \rVert^{\mathcal{P}} = \lVert x \rVert^{\mathcal{P}} + \lVert P_{\omega \setminus A}(y) \rVert^{\mathcal{P}} 
 \geq \lVert x\rVert^\mathcal{F}+\lVert P_{\omega\setminus A}(y)\rVert^\mathcal{F}
 \geq \lVert x \rVert^{\mathcal{F}} + \dfrac{\lVert y \rVert^{\mathcal{F}}}{2}.  
\]
\end{proof}

\begin{thm}\label{ell1}
$X^{\mathcal{F}}$ is $\ell_{1}$-saturated.
\end{thm}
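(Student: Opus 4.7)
The plan is to find, inside any closed infinite-dimensional subspace $E\subseteq X^\mathcal{F}$, a normalized block basic sequence equivalent to the unit vector basis of $\ell_1$. The upper $\ell_1$-estimate is essentially free: for any block vectors with pairwise disjoint supports, part (a) of Theorem \ref{quasi-quasi} gives $\|\sum \lambda_i x_i\|^\mathcal{F} \leq \sum |\lambda_i|\|x_i\|^\mathcal{F}$. For the lower estimate, Proposition \ref{stable-proposition} does all the work once the block sequence is chosen correctly: if $(x_n)$ is a normalized block sequence with $\mathrm{supp}(x_n)<\mathrm{supp}(x_{n+1})$ and each $x_{n+1}$ is $k_n$-stable for $k_n:=\max\mathrm{supp}(x_n)$, then writing $u_j=\sum_{i\leq j}\lambda_i x_i$ and applying Proposition \ref{stable-proposition} with $x=u_{j-1}$, $\lambda y=\lambda_j x_j$ (whose leading index is precisely $k_{j-1}$), an induction yields
\[ \Big\|\sum_{i=1}^n \lambda_i x_i\Big\|^\mathcal{F} \;\geq\; |\lambda_1|\;+\;\tfrac{1}{2}\sum_{j\geq 2}|\lambda_j|\;\geq\;\tfrac{1}{2}\sum_{j=1}^n|\lambda_j|. \]
Thus such a $(x_n)$ is automatically $2$-equivalent to the unit vector basis of $\ell_1$.

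The real content is then to produce such a sequence close to $E$. I would proceed in two stages. First, use the standard Bessaga--Pelczy\'nski/gliding-hump argument, which applies in the quasi-Banach setting since it relies only on completeness and the continuity of the coordinate projections: since $E$ is infinite-dimensional, one can build inductively a normalized sequence $(y_n)\subseteq E$ and a block sequence $(x_n)$ with $\mathrm{supp}(x_n)<\mathrm{supp}(x_{n+1})$ such that $\|y_n-x_n\|^\mathcal{F}$ decreases as fast as we please. Second, at each step of this induction, rather than taking an \emph{arbitrary} block approximant, I would insist on choosing $y_n\in E$ whose normalized block approximant $x_n$ is $k_{n-1}$-stable, where $k_{n-1}=\max\mathrm{supp}(x_{n-1})$. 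Once the block sequence has been produced, a standard perturbation argument (using summable control $\sum \|y_n-x_n\|^\mathcal{F}$ and the $\ell_1$-lower estimate) then transfers the equivalence to a sequence inside $E$.

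The crux, and the step I expect to be the main obstacle, is the stability extraction: showing that in \emph{every} infinite-dimensional subspace of $X^\mathcal{F}$, for every $k$ and every $N$, one can find a normalized vector supported past $N$ that is (essentially) $k$-stable, i.e.\ whose sup-norm is a factor $\tfrac{1}{4k}$ smaller than its $\|\cdot\|^\mathcal{F}$-norm. This is where the largeness of $\mathcal{F}$ must enter, and indeed it is needed: for $\mathcal{F}$ the hereditary closure of a partition, $\|\cdot\|^\mathcal{F}$ already defines a norm equivalent to the dual norm of a $c_0$-sum of finite-dimensional $\ell_1$'s (Proposition \ref{for-partitions}), so no $k$-stable vectors exist for large $k$. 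Largeness guarantees the needed combinatorial richness: for any infinite tail $M$ and any $n$, $\mathcal{F}\cap [M]^n\ne\emptyset$, so by playing an intended support of a block against several disjoint members of $\mathcal{F}\cap[M]^n$ one can inflate $\|\cdot\|^\mathcal{F}$ well beyond $\|\cdot\|_\infty$; combining this with the fact that any infinite-dimensional subspace of $X^\mathcal{F}$ has vectors with arbitrarily small initial projections then yields the required stable block approximants. Once this extraction lemma is in hand, the inductive construction and Proposition \ref{stable-proposition} close the argument.
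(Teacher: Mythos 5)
Your overall architecture coincides with the paper's: reduce to a normalized, disjointly supported block sequence, group blocks so that each new group is $k_n$-stable where $k_n$ is the maximum of the support of the previous group, and iterate Proposition \ref{stable-proposition} to get the lower $\ell_1$-estimate with constant $\tfrac{1}{2}$ (the upper estimate being free from Theorem \ref{quasi-quasi}(a)). That skeleton, and your bookkeeping of how Proposition \ref{stable-proposition} is applied inductively, are correct.

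The genuine gap is precisely the step you flag as ``the main obstacle'': you do not prove the stability extraction, and the mechanism you sketch for it is based on a wrong diagnosis. First, your sanity check fails: for $\mathcal{F}$ the hereditary closure of a partition $\{F_n\}$ into finite sets one has $\|x\|^{\mathcal{F}}=\sum_n\max_{k\in F_n}|x(k)|$, so a $0$--$1$ vector meeting $8k$ distinct blocks $F_n$ in one point each has $\|x\|_\infty=1$ and $\|x\|^{\mathcal{F}}=8k$, hence \emph{is} $k$-stable; and such spaces ($\ell_1$-sums of finite-dimensional $\ell_\infty$'s) are of course $\ell_1$-saturated. Second, largeness pulls in the wrong direction for your purpose: a large family contains big sets inside every infinite set, which makes it \emph{easier} to cover a support by few members of $\mathcal{F}$ and hence tends to make $\|\cdot\|^{\mathcal{F}}$ \emph{smaller} relative to $\|\cdot\|_\infty$. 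In the paper largeness is used only for the failure of the Schur property; Theorem \ref{ell1} needs no such hypothesis. The actual extraction argument is a soft one: for a normalized disjointly supported sequence $(x_i)$ the partial sums $s_L=\sum_{i=l}^{L}x_i$ satisfy $\|s_L\|_\infty\le 1$, while $\|s_L\|^{\mathcal{F}}\to\infty$ as $L\to\infty$, since otherwise lower semicontinuity would place $\sum_{i=l}^{\infty}x_i$ in $FIN(\|\cdot\|^{\mathcal{F}})=EXH(\|\cdot\|^{\mathcal{F}})$ (Proposition \ref{exh=fin}), whose elements have quasi-norm-vanishing tails, contradicting $\|x_i\|^{\mathcal{F}}=1$. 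Hence sufficiently long sums of consecutive blocks are $k$-stable for any prescribed $k$, and these (normalized) sums are exactly the vectors your induction needs; there is no need to hunt for stable vectors in an arbitrary subspace beyond this. Without this lemma (or a working substitute) your proof is incomplete at its central point.
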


\begin{proof}
	Let $(x_{n})$ be a sequence in $X^{\mathcal{F}}$ and let $E$ be its closed subspace. We are going to show that $E$ contains an isomorphic copy of $\ell_1$. By the standard arguments (see e.g. \cite{BePe}) we may assume that $E =
	\overline{span(x_{n})}$, where for each $n\in \omega$ we have $x_{n} \in c_{00}$, $\lVert x_{n} \rVert^{\mathcal{F}} = 1$ and $\mathrm{supp}(x_n)$ is finite. Additionally, we assume that $\mathrm{supp}(x_n)<\mathrm{supp}(x_{n+1})$ for each $n\in\omega$.
 

	It is enough to construct a sequence $(y_n)$ of unit vectors in $E$ which will be equivalent to the standard $\ell_1$-basis, i.e. such that for each sequence $(\lambda_i)_{i\leq n}$ of scalars \[ \lVert \sum_{i=1}^{n}\lambda_{i}y_{i} \rVert^{\mathcal{F}} \geq \dfrac{1}{2} \mathlarger{\sum}_{i=1}^{n} \lvert \lambda_{i} \rvert. \]
The sequence $(y_n)$ will be of the form of a \emph{block sequence} of $(x_n)$, i.e.  $y_{n} = \sum \limits_{i=l_n+1}^{l_{n+1}} \alpha_{i}x_{n_{i}}$, $n\in\omega$, for some 
increasing sequences $(n_{i})_i$ and $(l_n)_n$ of natural numbers. 

	We define by induction sequences of natural numbers $(k_{n})$, $(l_{n})$ and the sequence of vectors $(y_{n})$. Let $l_{1} = 1$, $y_{1} = x_{1}$ and $k_{1} = \max \mathrm{supp}(x_{1})$. Next, let 
\begin{itemize}
	\item[(1)] $l_{n+1}\in \omega$ be such that
$\mathlarger{\sum}_{i=l_{n}+1}^{l_{n+1}}x_{i}$ is $k_{n}$-stable,
\item[(2)]  $y_{n+1} = \dfrac{\mathlarger{\sum}_{i=l_{n}+1}^{l_{n+1}}x_{i}}{\lVert \mathlarger{\sum}_{i=l_{n}+1}^{l_{n+1}}x_{i}\rVert^\mathcal{F}}$ and
\item[(3)]  $k_{n+1} = \max \mathrm{supp}(y_{n+1})$.
\end{itemize}

We will show that we are able to perform such construction. Only condition (1) needs some explanations.

\textbf{Claim.} For each $l\in \omega$ and each $k\in \omega$ there is $L>l$ such that $\mathlarger{\sum}_{i=l}^{L}x_{i}$ is $k$-stable.


Suppose it is not true. Then there are $k,l \in \omega$ such that for each $L > l$ 
\[
\lVert \sum_{i=l}^L x_i\rVert_\infty > \dfrac{1}{2k} \|\sum \limits_{i=l}^{L}x_i\|^{\mathcal{F}}.\]
Denote $x = \sum \limits_{i=l}^{\infty} x_i$. Then, for every $L$ we have $\|P_L(x)\|^{\mathcal{F}} < 2k \|x\|_\infty \leq 2k$, and so by lower semicontinuity $\|x\|^{\mathcal{F}} \leq 2k$. Thus, $x \in FIN(\|\cdot\|^{\mathcal{F}})$. On the other
	hand, for every $i \in \omega$ we have $\|x_{i}\|^{\mathcal{F}} = 1$, so $\|P_{\omega \setminus L}(x)\|^{\mathcal{F}} \geq 1$ for every $L$, which means that $x \notin X^{\mathcal{F}}$. This is a contradiction with Proposition \ref{exh=fin}.



\bigskip

Having this construction, fix $n \in \omega$ and a sequence $(\lambda_{i})_{i \leq n}$. Of course, $\lVert y_{i} \rVert^{\mathcal{F}} = 1$ for each $i$ and subsequently using Proposition \ref{stable-proposition} we have 
\[
\lVert \sum_{i=1}^{n}\lambda_{i}y_{i} \rVert^{\mathcal{F}} = \lVert \sum_{i=1}^{n-1}\lambda_{i}y_{i} + \lambda_{n}y_{n} \rVert^{\mathcal{F}} \geq \lVert \sum_{i=1}^{n-1}\lambda_{i}y_{i} \rVert^{\mathcal{F}} + \frac{\lvert \lambda_{n}\rvert}{2} \geq
... \geq \lvert \lambda_{1} \rvert + \frac{1}{2}\mathlarger{\sum}_{i=2}^{n} \lvert \lambda_{i} \rvert \geq \frac{1}{2} \mathlarger{\sum}_{i=1}^{n} \lvert \lambda_{i} \rvert
\]
and so $(y_n)$ is equivalent to the standard $\ell_1$-basis.
\end{proof}

\bibliographystyle{alpha}
\bibliography{quasi-banach-2}

\end{document}